\newcommand{\norm}[1]{\left\lVert#1\right\rVert}
\newcommand{\N}{\omega}
\newcommand{\cS}{\mathcal{S}}
\newcommand{\nat}{\omega}
\renewcommand{\restriction}{\mathbin\upharpoonright}    
\newtheorem*{theorem*}{Theorem}
\newtheorem*{maintheorem*}{Main Theorem}
\newtheorem*{corollary*}{Corollary}
\newtheorem*{definition*}{Definition}
\newtheorem{mthm}{Theorem}
\newtheorem{theorem}{Theorem}[section]
\newtheorem{prop}[theorem]{Proposition}
\newtheorem{claim}{Claim}[theorem]
\newtheorem{lemma}[theorem]{Lemma}
\newtheorem{cor}[theorem]{Corollary}
\newtheorem{question}{Question}
\newtheorem{fact}[theorem]{Fact}
\theoremstyle{definition}
\newtheorem{definition}[theorem]{Definition}
\newtheorem{notation}[theorem]{Notation}
\newtheorem{Setup}[theorem]{Setup}
\theoremstyle{remark}
\newtheorem{remark}[theorem]{Remark}
\newcommand*\axiomfont[1]{\textsf{\textup{#1}}}
\newcommand\gch{\axiomfont{GCH}}
\newcommand\ch{\axiomfont{CH}}
\newcommand\ale[1]{\marginpar{Alejandro: #1}}
\DeclareRobustCommand{\rchi}{{\mathpalette\irchi\relax}}
\newcommand{\irchi}[2]{\raisebox{\depth}{$#1\chi$}}
\DeclareMathOperator{\Suc}{Succ}
\DeclareMathOperator{\con}{conv}
\DeclareMathOperator{\supp}{supp}
\DeclareMathOperator{\range}{Im}
\DeclareMathOperator{\Part}{Part}
\DeclareMathOperator{\Null}{Null}
    \def\sq{\sqsubseteq}
    \newcommand{\one}{\mathop{1\hskip-3pt {\rm l}}}
\newcommand{\tpitchfork}{%
  \vbox{
    \baselineskip\z@skip
    \lineskip-.52ex
    \lineskiplimit\maxdimen
    \m@th
    \ialign{##\crcr\hidewidth\smash{$-$}\hidewidth\crcr$\pitchfork$\crcr}
  }%
}
\def\s{\subseteq}
\def\forces{\Vdash}
\DeclareMathOperator{\cf}{cf}
\DeclareMathOperator{\cov}{cov}
\DeclareMathOperator{\non}{non}
\renewcommand{\mid}{\mathrel{|}\allowbreak}
\newcommand{\dom}{\mathop{\mathrm{dom}}\nolimits}
\title[$L$-orthogonal sequences and $L$-orthogonal elements]{A Banach space with $L$-orthogonal sequences but without $L$-orthogonal elements}
\author[Avilés]{Antonio Avilés}
\address[Avil\'es]{Universidad de Murcia, Departamento de Matematicas, Campus de Espinardo, 
30100 Murcia, Spain}
\email{avileslo@um.es}
\author[Martínez-Cervantes]{Gonzalo Martínez-Cervantes}
\address[Martínez-Cervantes]{Universidad de Murcia, Departamento de Matematicas, Campus de Espinardo, 
30100 Murcia, Spain}
\email{gonzalo.martinez2@um.es}
\author[Poveda]{Alejandro Poveda}
\address[Poveda]{Harvard University, Department of Mathematics and Center of Mathematical Sciences and Applications, Cambridge, MA 02138, USA}
\email{alejandro@cmsa.fas.harvard.edu}
\urladdr{https://scholar.harvard.edu/apoveda/home}
\urladdr{www.alejandropovedaruzafa.com}
\author[Sáenz]{Luis Sáenz}
\address[Sáenz]{Centro de Ciencias Matemáticas, UNAM A.P. 61-3, Xangari, Morelia, Michoacán, 58089, México}
\email{luisdavidr@ciencias.unam.mx}
\subjclass[2020]{Primary 03E35, 46B04, 54A20, 54A35; Secondary: 46B03, 57N17, 03E75, 54D80.}
\keywords{$L$-orthogonal elements, $L$-orthogonal sequences, $Q$-points.}
\begin{document}

\begin{abstract}
     We prove that the existence of Banach spaces with $L$-orthogonal sequences but without $L$-orthogonal elements is independent of the standard foundation of Mathematics, ZFC. This provides a definitive answer to \cite[Question~1.1]{AvilesMartinezRueda}. Generalizing classical $Q$-point ultrafilters, we introduce the notion of $Q$-measures  and provide several results generalizing former theorems by Miller \cite{Miller} and Bartoszynski \cite{Bartoszynski} for $Q$-point ultrafilters.
\end{abstract}

\maketitle
\section{Introduction}

The presence of isomorphic copies of \(\ell_1\) in a Banach space and their relation to orthogonality conditions in the corresponding bidual has been a central topic of research in Banach space theory. The theory was developed by G. Godefroy, N.~J. Kalton, and B. Maurey, among others top-notch experts, in the late 1980s. A fundamental result due to Godefroy  \cite[Theorem II.4]{god} states that a Banach space \(X\) contains an isomorphic copy of \(\ell_1\) if, and only if, \(X\) admits an equivalent renorming with an element \(x^{**} \in X^{**}\) such that for every \(x \in X\),
\begin{equation*}
\|x + x^{**}\| = 1 + \|x\|.
\end{equation*}
 These elements are termed \textit{L-orthogonal} as per the notation in \cite{loru}.

A similar characterization given by V.~Kadets, V.~Shepelska, and D.~Werner \cite[Theorem 4.3]{ksw} establishes that a Banach space \( X \) contains an isomorphic copy of \(\ell_1\) if, and only if, there exists an equivalent renorming of \( X \) with a sequence $(x_n)_{n\in \omega }$ in its unit ball such that for every \(x \in X\)
\begin{equation*}
\lim_{n\to \infty}\|x + x_n\| = 1 + \|x\|.
\end{equation*}

Sequences with this property are termed  \textit{$L$-orthogonal} in \cite{AvilesMartinezRueda}.
For an $L$-orthogonal sequence $(x_n)_{n\in \omega }$, the function $$\tau(x) := \lim_{n\to \infty} \|x + x_n\|$$ defines a type of Maurey satisfying \(\tau(x) = 1 + \|x\|\) for every \(x \in X\) \cite{Maurey}. Accordingly, \(L\)-orthogonal sequences are called \textit{canonical \(\ell_1\)-types} in \cite{ksw}.

Examples of nonseparable Banach spaces with \(L\)-orthogonal elements but with no $L$-orthogonal sequences are known. Whether there is  a Banach space with \(L\)-orthogonal sequences yet without \(L\)-orthogonal elements in its double dual was asked in \cite{AvilesMartinezRueda}. In the mentioned paper, the authors provide a  positive answer  under the existence of selective ultrafilters on the set of natural numbers, $\omega$. The existence of selective ultrafilters on $\omega$ is  consistent with the ZFC axioms --for instance, selective ultrafilters exist under the Continuum Hypothesis-- and so is its negation \cite{KunenTheorem}.

The above-mentioned result was later improved in \cite{HrusakSaenz} by the last author and Hru\v{s}\'ak, who showed that a free ultrafilter $\mathcal{U}$  is a $Q$-point if, and only if, for every Banach space $X$ and every $L$-orthogonal sequence $(x_n)_{n\in \omega}$ in $X$,  $\mathcal{U}$-$\lim x_n \in X^{**}$ is an $L$-orthogonal element. ($\mathcal{U}$-$\lim x_n$ is the standard notation for the limit, under the ultrafilter $\mathcal{U}$, in the weak*-topology of the sequence $(x_n)_{n\in \omega}$.)  In effect, this refines the result of \cite{AvilesMartinezRueda} in that every selective ultrafilter is a $Q$-point. 

\smallskip

In \cite{AvilesMartinezRueda}, the authors left open the question of whether, consistently, there exists a Banach space with $L$-orthogonal sequences but without $L$-orthogonal elements. This manuscript settles this question affirmatively:

\begin{mthm}[Independence of $L$-elements]\label{Main Main theorem}
The existence of a Banach space with an $L$-orthogonal sequence but without $L$-orthogonal elements in its bidual is independent of the $\mathrm{ZFC}$ axioms.  
\end{mthm}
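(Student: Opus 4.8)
The plan is to prove the two directions of the independence result separately, using the characterization from \cite{HrusakSaenz} as the bridge between the Banach-space statement and a purely combinatorial statement about ultrafilters and measures on $\omega$.

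\textbf{The "consistently exists" direction.} Here I would invoke the result of \cite{HrusakSaenz}: if there is a $Q$-point $\mathcal U$ on $\omega$, then for \emph{every} Banach space $X$ with an $L$-orthogonal sequence $(x_n)_{n\in\omega}$, the weak*-limit $\mathcal U\text{-}\lim x_n$ is an $L$-orthogonal element of $X^{**}$. This is the \emph{wrong} direction for our purposes — it produces spaces \emph{with} $L$-orthogonal elements. So instead I need a model where \emph{some} Banach space has an $L$-orthogonal sequence but the limit construction always fails. The natural candidate is a model with no $Q$-points at all (such a model exists by Miller's theorem \cite{Miller}); but one must check that "no $Q$-point" actually produces the desired Banach space, not merely that the specific limit construction fails. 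This is presumably where the paper's generalization to \emph{$Q$-measures} enters: one shows that a suitable space (likely built from an almost disjoint family or a similar combinatorial gadget, in the spirit of \cite{AvilesMartinezRueda}) has an $L$-orthogonal sequence, and that an $L$-orthogonal element in its bidual would yield, via a finitely-additive measure on $\omega$, exactly a $Q$-measure; so killing all $Q$-measures kills all $L$-orthogonal elements of that space. The hard part here is the measure-theoretic refinement of the Hru\v{s}\'ak--S\'aenz argument: passing from ultrafilter-limits to arbitrary finitely additive measures, and isolating precisely the "$Q$" combinatorics (a partition/selector condition) that the limit must satisfy.

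\textbf{The "consistently does not exist" direction.} For the other side of independence I would force (or pass to a model such as one satisfying $\mathrm{CH}$, or add a selective ultrafilter) so that $Q$-measures — equivalently, by the generalization, enough $Q$-points — exist abundantly, in fact so that \emph{every} Banach space with an $L$-orthogonal sequence admits an $L$-orthogonal element. Under $\mathrm{CH}$ selective ultrafilters exist, hence $Q$-points exist, and by \cite{HrusakSaenz} every $L$-orthogonal sequence in every Banach space has its $\mathcal U$-limit $L$-orthogonal; therefore no Banach space can witness the failure. So in the $\mathrm{CH}$ model the asserted pathological Banach space does not exist. Combining the two models gives the independence.

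\textbf{Main obstacle.} I expect the crux to be the forward half: verifying that a concretely constructed Banach space (from an appropriate combinatorial family) genuinely has an $L$-orthogonal sequence while any candidate $L$-orthogonal element in its bidual is forced to encode a $Q$-measure on $\omega$, so that a model with no $Q$-measures refutes it. This requires (i) the right generalization of $Q$-point to the finitely-additive-measure setting, (ii) a consistency proof that $Q$-measures can fail to exist — a Miller/Bartoszyński-style iteration argument \cite{Miller,Bartoszynski} adapted to measures rather than ultrafilters — and (iii) checking the construction does not accidentally introduce an $L$-orthogonal element by some route not passing through a $Q$-measure. Steps (i) and (ii) are exactly the "$Q$-measures" results the abstract advertises; step (iii) is the delicate bookkeeping that makes the reduction tight.
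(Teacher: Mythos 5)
Your proposal follows essentially the same route as the paper: the consistency of non-existence comes from CH via $Q$-points and the Hru\v{s}\'ak--S\'aenz limit result, while the consistency of existence comes from (i) generalizing $Q$-points to finitely additive $Q$-measures, (ii) a Miller-style iteration showing consistently no such measures exist, and (iii) a Banach space built from partition-type combinatorial gadgets (the paper amalgamates spaces $C(K_{\mathcal P})$ over all measures in $S_{\ell_1^{**}}$) in which any $L$-orthogonal element of the bidual would encode a fit $Q$-measure. The only cosmetic difference is your guess of an almost disjoint family as the gadget, whereas the paper uses partitions of $\omega$ into finite sets, exactly in the spirit you anticipated.
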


	The key idea behind this result is the generalization of the notion of $Q$-point to measures, something already considered in \cite{Swierczynska}, and for $P$-measures in \cite{BorodulinSobota,BorodulinPpoints}. Remember that $\mathcal{U}$ is a $Q$-point if for every partition of $\omega$ into finite sets there is a set in $\mathcal{U}$ that is a selector for the partition, that is, a set that intersects each set of the partition in at most one point. There are several ways to extend this definition to measures, the most relevant for us are the following:
	
	\begin{definition}
		Let $\mu:\mathcal{P}(\omega)\longrightarrow [0,+\infty)$ be a finitely additive measure defined on the subsets of $\omega$ and vanishing on finite sets.
	\begin{enumerate}
		\item $\mu$ is a \textit{$Q^+(\omega)$-measure} if every partition of $\omega$ into finite sets has a selector of positive measure.
		\item $\mu$ is a \textit{ fit $Q$-measure} if for every partition of $\omega$ into finite sets and every $\delta<\frac{1}{2}$ there is a finite union of selectors of measure greater than $\delta\cdot \mu(\omega)$. 
		\item $\mu$ is a \textit{strong $Q$-measure} if every partition of $\omega$ into finite sets has a selector of full measure.
	\end{enumerate}
		
	\end{definition}

It is immediate that the Dirac measure of a $Q$-point is a strong $Q$-measure. Moreover, every strong $Q$-measure is a fit $Q$-measure and every fit $Q$-measure is a $Q^+(\omega)$-measure.

Theorem~\ref{Main Main theorem} follows from the combination of the following two facts.

\begin{theorem}\label{thm: no Q points1}
	It is consistent with ZFC that there are no $Q^+(\omega)$-measures.
\end{theorem}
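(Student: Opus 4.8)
The strategy is to build a forcing extension in which \emph{every} finitely additive measure $\mu$ on $\mathcal{P}(\omega)$ vanishing on finite sets fails to be a $Q^+(\omega)$-measure. Since a $Q^+(\omega)$-measure is the weakest of the three notions, killing all of them is the natural target, and the obvious template is the classical consistency result that there are no $Q$-points (Miller's model, built by an $\aleph_2$-length countable support iteration over a model of \textsf{CH} of a proper forcing that diagonalizes away selectors). Concretely, I would fix in the ground model, for the current stage of the iteration, a name for a partition $\langle I_n : n\in\omega\rangle$ of $\omega$ into finite intervals with $|I_n|$ growing fast (say $|I_n|\ge n^2$), and force with a $\sigma$-centered (hence proper, $\omega^\omega$-bounding is \emph{not} needed here, but centeredness helps) poset $\mathbb{Q}$ whose conditions are pairs $(s,F)$ with $s$ a finite partial selector and $F$ a finite set of measures--- or rather, since we cannot anticipate the measures in the final model, a poset that generically adds a partition $\mathcal{J}$ such that no ground-model-coded measure, and by a reflection/chain-condition argument no measure in the extension, has a finite union of $\mathcal{J}$-selectors of positive measure.

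The key combinatorial point is the following reformulation: $\mu$ is \emph{not} a $Q^+(\omega)$-measure iff there is a partition $\langle I_n:n\in\omega\rangle$ into finite sets such that every selector $S$ has $\mu(S)=0$, equivalently (by additivity) every set meeting each $I_n$ in a bounded number of points has measure $0$. So I want a single partition $\mathcal{J}=\langle I_n:n\in\omega\rangle$ in the final model that is ``$\mu$-null-selecting'' for \emph{all} $\mu$ simultaneously. The heart of the argument is a counting/measure estimate: if $\mu(\omega)=1$ and the blocks $I_n$ are large, then for any finite list of selectors $S_1,\dots,S_k$ the union $S_1\cup\dots\cup S_k$ meets each block in $\le k$ points, so a diagonalization that refines the block structure unboundedly will eventually make $\mu(\bigcup S_i)$ small --- but this last inference is exactly what is \emph{not} automatic for finitely additive measures, and this is where the real work lies.

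The main obstacle, therefore, is that unlike an ultrafilter (which is $\{0,1\}$-valued and ``decides'' each block), a finitely additive measure can smear mass across infinitely many blocks, and a set of positive measure can meet blocks in a bounded-but-nonzero way on a $\mu$-positive collection of blocks. To defeat this I would introduce an intermediate object: for a measure $\mu$ and a partition $\mathcal{J}$, consider the pushforward-type quantity $\sup\{\mu(S):S$ a finite union of $\mathcal{J}$-selectors$\}$, and show that along a carefully chosen iteration this supremum is forced to be $0$ for every $\mu$. The argument splits by cases on the $P$-point-like behavior of $\mu$: if $\mu$ concentrates (in a suitable sense) on a set where some pushforward is discrete, one reduces to the ultrafilter case and invokes the Miller-style genericity; the complementary ``spread-out'' case is handled by the measure estimate, using that the generic partition $\mathcal{J}$ is chosen so that its blocks are not only large but also ``independent'' of any ground-model partition into sets of bounded relative size. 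Assembling this requires (i) verifying the iterand is proper and $\aleph_2$-c.c.\ so that names for measures in the final model reflect to intermediate stages, (ii) a reflection argument showing it suffices to kill measures appearing at some stage $<\omega_2$, and (iii) the core measure-theoretic lemma above, which I expect to be the technically demanding step and the one most likely to dictate the exact shape of the forcing.
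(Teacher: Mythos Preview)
Your plan has the right shape---iterate a proper forcing of length $\omega_2$ over a model of \textsf{CH}, reflect each measure to an intermediate stage, and argue that the next generic real kills it---but you are missing the structural observation that makes the argument go through, and without it your ``core measure-theoretic lemma'' will stay vague. The point is that for a measure $\mu$, the sets $\mathcal{I}^+_k:=\{A:\mu(A)\ge 1/(k+1)\}$ are each \emph{$(k+1)$-cc}: no $k+2$ of them can be pairwise disjoint. So the null ideal of $\mu$ is what the paper calls \emph{$\sigma$-bounded-cc}, namely $\mathcal{I}^+=\bigcup_k\mathcal{I}^+_k$ with each piece $(k+1)$-cc and upwards closed. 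This is exactly the bookkeeping device that replaces the $\{0,1\}$-valuedness you are worried about losing, and it reflects to intermediate stages by a routine L\"owenheim--Skolem argument (you only need that each $\mathcal{I}^+_k\cap V[G_\alpha]$ lies in $V[G_\alpha]$, not that the measure itself does).

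With this in hand you do not need to design a bespoke forcing at all: the standard Laver (or Mathias, or Miller) iteration works. Miller's classical lemma says that if a name $\tau$ for a function is forced to dominate the next Laver real, then below any condition there are two extensions and two almost-disjoint ground-model sets $Z_0,Z_1$ with $q_i\Vdash\tau``\omega\subseteq Z_i$. Iterating this $k$ times gives $2^k$ extensions and $2^k$ pairwise almost-disjoint envelopes $Z_i$. Since $\mathcal{I}^+_k$ is $(k+1)$-cc, at most $k+1$ of these $Z_i$'s can lie in $\mathcal{I}^+_k$, so some $q_{i^*}$ forces $\tau``\omega\subseteq Z_{i^*}\notin\mathcal{I}^+_k$; by upward closure this contradicts $\tau``\omega\in\mathcal{I}^+_k$. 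This replaces your unspecified ``spread-out case'' entirely. (The paper also notes an even shorter route: the Filter Dichotomy together with the non-existence of $Q$-points, both of which hold in Miller's model, already rules out $Q^+(\omega)$-measures.)
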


\begin{theorem}\label{nofitnoL}
	If no fit $Q$-measures exist, then there is a Banach space with an $L$-orthogonal sequence and no $L$-orthogonal elements.
\end{theorem}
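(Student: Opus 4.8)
The plan is to prove the contrapositive: from a Banach space $X$ with an $L$-orthogonal sequence $(x_n)_{n\in\omega}$ all of whose weak*-cluster points in $X^{**}$ fail to be $L$-orthogonal, one would cook up a fit $Q$-measure. But actually the more natural direction, and presumably the one intended here, is the direct construction: assuming no fit $Q$-measures exist, build the witnessing Banach space. So first I would recall the standard machine, already used in \cite{AvilesMartinezRueda} and \cite{HrusakSaenz}, that encodes a partition of $\omega$ into finite sets — or more precisely a whole family of such partitions — into the norm of a Banach space. The prototype is the following: given an interval partition $\{I_k : k\in\omega\}$ of $\omega$ into consecutive finite blocks, one considers a space built on $c_0(\omega)$ (or $c_{00}$) whose unit ball is the closed convex hull of the canonical basis together with vectors of the form $\frac{1}{2}(e_i + e_j)$ for $i,j$ in the same block $I_k$; equivalently one takes the norm dual to an appropriate collection of functionals. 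The unit vector basis $(e_n)$ is then an $L$-orthogonal sequence in this space, its weak*-limits along an ultrafilter $\mathcal U$ are $L$-orthogonal precisely when $\mathcal U$ reflects the $Q$-point-like selector property for that partition, and the measure-theoretic analogue is: a finitely additive measure $\mu$ yields an $L$-orthogonal element via barycentric/weak*-integration exactly when $\mu$ has a finite union of selectors of measure close to $\mu(\omega)$ — which is the \emph{fit} $Q$-measure condition, the $\tfrac12$ threshold coming from the coefficient $\tfrac12$ in the amalgamating vectors $\tfrac12(e_i+e_j)$.

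Concretely, I would proceed as follows. Step one: fix an enumeration of a suitable family of interval partitions of $\omega$ — it suffices to use the single "generic" partition, or a countable dense family, but one wants the space to witness failure against \emph{all} potential $L$-orthogonal elements, so one takes the $\ell_1$- or $c_0$-sum indexing over a carefully chosen set, or better, works with a single space and shows every $L$-orthogonal element would give a fit $Q$-measure for the associated partition. Step two: define the Banach space $X$ as the completion of $c_{00}(\omega)$ under the norm whose unit ball is $\overline{\operatorname{conv}}^{\,w^*}$ of $\{\pm e_n\} \cup \{\pm\tfrac12(e_i+e_j) : i\neq j \text{ in the same block}\}$, and verify routinely that $X$ contains $\ell_1$ isometrically and that $(e_n)$ is an $L$-orthogonal sequence: for $x\in c_{00}$ with $\|x\|\le 1$, as soon as $n$ exceeds the support of $x$ one gets $\|x+e_n\|\le \|x\|+1$ always, and $\ge$ by testing against a norming functional, so $\lim_n\|x+e_n\| = 1+\|x\|$, then extend to all of $X$ by density and a standard $3\varepsilon$ argument. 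Step three — the crux: suppose $z^{**}\in X^{**}$ were $L$-orthogonal. Restricting the functionals that define the norm, $z^{**}$ corresponds to a finitely additive measure $\mu$ on $\mathcal P(\omega)$ (essentially the weak*-limit data of $z^{**}$ evaluated on the coordinate functionals), normalized so that the $L$-orthogonality inequality $\|x+z^{**}\|=1+\|x\|$ forces, for every finite $F\subseteq\omega$ and every pattern of signs, that $\mu$ must assign large mass to unions of selectors; chasing the constant through the $\tfrac12(e_i+e_j)$ geometry shows that for every $\delta<\tfrac12$ there must exist finitely many selectors of the partition whose union has $\mu$-measure exceeding $\delta\cdot\mu(\omega)$. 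That is exactly the defining property of a fit $Q$-measure, contradicting our hypothesis. Hence no $L$-orthogonal element exists, and $X$ is the required space.

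I expect the main obstacle to be Step three, specifically the careful bookkeeping that converts the analytic statement "$\|x+z^{**}\| < 1+\|x\|$" into the combinatorial statement "$\mu$ admits a finite union of selectors of measure $>\delta\mu(\omega)$ for all $\delta<\tfrac12$." One has to identify the right object played by $\mu$: it is not literally $z^{**}$ but something like $A\mapsto \limsup$ of the coordinates of a net converging to $z^{**}$, or the measure obtained by pushing $z^{**}$ through the isometric embedding into a space of measures; one must check it is finitely additive, vanishes on finite sets (because $(e_n)$ has weak*-null character on any fixed coordinate — each $e_n^*$ sees $z^{**}$ as the limit, forcing $\mu(\{n\})=0$), and has the claimed positivity. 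A second, more technical point is ensuring the constructed space really defeats \emph{every} $L$-orthogonal element and not just those "aligned" with coordinates — this is handled by noting that any $z^{**}\in X^{**}$ is automatically, by the structure of the predual and the Hahn–Banach/Goldstine argument, determined on the relevant $\sigma$-algebra by such a measure, so there is no loss. The rest — that $X$ contains $\ell_1$, that $(e_n)$ is $L$-orthogonal, that the norm is well-defined and the unit ball is weak*-closed in the bidual — is the routine part, parallel to the constructions already in \cite{AvilesMartinezRueda, HrusakSaenz}, and I would cite those for the boilerplate and concentrate the write-up on the fit-$Q$-measure extraction.
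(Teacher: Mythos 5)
There is a genuine gap, and it is in the very first decision you make: you build your space from a \emph{single} partition (or a countable family) of $\omega$ into finite blocks, and then hope that an $L$-orthogonal element would yield a fit $Q$-measure \emph{for that partition}. But the hypothesis ``no fit $Q$-measures'' only says that for each measure $\mu$ there is \emph{some} partition $\mathcal{P}_\mu$, depending on $\mu$, and some $\delta<\tfrac12$ witnessing the failure; a measure can perfectly well have full-measure selectors for your fixed partition $\mathcal{P}$ while failing fitness only at other partitions, so your Step three produces no contradiction. Worse, the single-partition space always has $L$-orthogonal elements outright, in ZFC: take any ultrafilter $\mathcal{U}$ containing a selector $S$ of $\mathcal{P}$ (such ultrafilters always exist). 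In $C(K_{\mathcal{P}})$, for any $g$ one can modify a point where $|g|$ is almost attained, beyond the finitely many coordinates that essentially determine $g$, so that its coordinates are $+1$ (resp.\ $-1$ on $S$, which is legal because $S$ meets each block at most once) on a set of $\mathcal{U}$; evaluating $g+\mathcal{U}\text{-}\lim f_n$ against the corresponding Dirac measures gives $\|g+\mathcal{U}\text{-}\lim f_n\|=1+\|g\|$. The same phenomenon occurs in your convex-hull variant. So no space built from one partition can witness the theorem, and no countable family of partitions helps either, since the witnessing partition varies with the (arbitrary, $2^{\mathfrak{c}}$-many) candidate measures.

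The paper's proof is organized precisely to overcome this: it diagonalizes over \emph{all} normalized measures. For each $\mu\in\mathcal{S}:=S_{\ell_1^{**}}$, Lemma~\ref{Ready for Main teo} (using the Hahn decomposition, the two witnessing partitions of $\mu^+$ and $\mu^-$ glued along a clopen splitting of $\beta\omega$, and Lemma~\ref{theorem on lower from delta}) produces an operator $T_\mu\colon\ell_1\to C(K_{\mathcal{P}_\mu})$ and a constant $c_\mu$ with $(T_\mu e_n)_n$ $L$-orthogonal but $\|T_\mu^{**}\mu-c_\mu\mathbbm{1}\|<1$. These are then amalgamated into one space $X=\ell_1\oplus c_0(\mathcal{S})$ with the norm $\|(x,y)\|=\sup_{\mu\in\mathcal{S}}\{\|x\|_1,\|T_\mu x+y_\mu\mathbbm{1}\|_\infty\}$, which embeds isometrically into $\ell_1\oplus_\infty\bigl(\bigoplus_{\mu\in\mathcal{S}}C(\{0,1\}^\omega)\bigr)_\infty$; Lemma~\ref{lemaux1} gives the $L$-orthogonal sequence, and the point of the chosen direct-sum structure is that $X^{**}=\ell_1^{**}\oplus\ell_\infty(\mathcal{S})$, so any candidate $L$-orthogonal $z^{**}=(x^{**},y^{**})$ comes with a concrete measure $x^{**}$ attached; one then looks at the \emph{component indexed by} $\mu=x^{**}/\|x^{**}\|$, where Corollary~\ref{coroaux} and the inequality $\|T_\mu^{**}\mu-c_\mu\mathbbm{1}\|<1$ (together with the fact that that fiber contains the constants, absorbing $y^{**}_\mu$ and $c_\mu$) yield the contradiction, the case $x^{**}=0$ being excluded because $c_0(\mathcal{S})$ contains no copy of $\ell_1$. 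None of this self-referential indexing, the constants bookkeeping via the $c_0(\mathcal{S})$ summand, or the Hahn-decomposition step appears in your outline, and without the amalgamation over all of $S_{\ell_1^{**}}$ the strategy cannot be repaired by the ``careful bookkeeping'' you defer to Step three.
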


The first part of this paper (\S\ref{sec:Qmeasures}) is a purely set-theoretic investigation. In the first place in  \S\ref{sec: extending filters} we make sure that  these generalizations of $Q$-points are meaningful. Extending classical results in the area (see \cite[Theorem~4.6.6]{Bartoszynski}), we prove that $\mathfrak{d}=cov(\mathcal{M})$ is equivalent to the fact that every atomless measure of density less than $\mathfrak{d}$ extends to a strong $Q$-measure. So in some models these objects are indeed abundant. Then, in \S\ref{sec: no Q measures} we prove Theorem~\ref{thm: no Q points1}.  We generalize the proof that there are no $Q$-points in Laver's model \cite{Miller}, which applies to other models obtained by iterated forcing as well. A more general statement is that no $\sigma$-bounded-cc ideal is rapid in these models.

\smallskip

The second part of the paper (\S\ref{sec:BanachSpaceLorthogonal}) contains the proof of Theorem~\ref{nofitnoL}. This section focuses more on Banach spaces and less on set theory. We adapt the proof from \cite{AvilesMartinezRueda} that if no $Q$-points exist, then an $L$-orthogonal sequence can be constructed with no $L$-orthogonal elements in its weak$^*$-closure. There are two steps: First we prove that any measure $\mu:\mathcal P (\omega)\to [-1,1]$ whose positive and negative parts are not fit $Q$-measures yields  a Banach space $X_\mu$ with an $L$-orthogonal sequence $(x_n)_{n\in \omega}\s X_\mu$ such that  $T^{**}(\mu)$ is not an $L$-orthogonal element of $X_\mu$. (Here $T\colon \ell_1\rightarrow X$ is the operator defined as $T(e_n):=x_n$ for $n\in \omega$.) This is accomplished in \S\ref{sec: A space for each $Q$-measure}. Afterwards, in \S\ref{sec: A space without $L$-orthogonal elements}, we amalgamate all of these spaces $X_\mu$ into a single one that serves as a witness for Theorem~\ref{nofitnoL}. In addition to this proof, there is also in \S\ref{strongstarstar} a generalization of a result from \cite{HrusakSaenz} that goes in the converse direction: for an $L$-orthogonal sequence, $T^{**}(\mu)$ is always an $L$-orthogonal element if $\mu$ is a strong $Q$-measure.

\smallskip

The paper is concluded with Section \ref{sec:openquestions} by presenting a few open questions. Our notations are standard both in Set Theory and Banach Space Theory. Relevant preliminaries will be also provided next in \S\ref{sec:prelimminaries}. 

\section{Preliminaries}\label{sec:prelimminaries}

As it is customary in Set Theory we denote by $\omega$ the set of natural numbers.
A set $\mathcal F\subseteq \mathcal P(\omega)$ is called a \emph{filter} if $\emptyset\notin\mathcal{F}$ and $\mathcal{F}$ is closed under taking supersets and finite intersections. We shall assume from the outset  that all of our filters contain the complements of finite subsets of $\omega$. 

{Ideals} provide the dual notion of a filter. Specifically, $\mathcal{I}\s\mathcal{P}(\omega)$ is an \emph{ideal} if, and only if, $\mathcal{I}^*=\{\omega\setminus I\mid  I\in \mathcal{I}\}$ is a filter; to wit,  $\omega\notin \mathcal{I}$ and $\mathcal{I}$ contains all finite subsets of $\omega$ and it is closed under finite unions and under taking subsets. 
The collection of \emph{$\mathcal{I}$-positive subsets of $\omega$} is  $$\mathcal{I}^+:=\{J\mid  J\notin \mathcal{I}\}.$$ The \emph{restriction} of an ideal (resp. a filter) to a set $X\s \omega$ is $$\mathcal{I}\lvert_X:=\{I\cap X:I\in \mathcal{I}\}.$$

\smallskip

Maximal filters (also known as \emph{ultrafilters})  are  of paramount importance. An ultrafilter  $\mathcal{U}\subseteq \mathscr{P}(\omega)$ is called a \emph{$Q$-point} if  for every partition $\{F_n\mid n\in \omega\}$ of $\omega$ into finite sets there exists $X\in \mathcal{U}$ such that $\lvert X \cap F_n\lvert\leq 1$ for all $n\in \omega$. More generally (and dually), we call an ideal $\mathcal{I}\subseteq \mathcal{P}(\omega)$  a \emph{$Q^{+}(\omega)$-ideal} if for every partition $\{F_n\mid n\in \omega\}$ of $\omega$ into finite sets there exists $X\in \mathcal{I}^+$ such that  $\lvert X \cap F_n\lvert\leq 1$ for all $n\in \omega$. A natural extension of $Q^{+}(\omega)$-ideals are  rapid ones.  An ideal \emph{$\mathcal{I}$ is rapid} if for every partition  $\{F_n\mid n<\omega\}$ of $\omega$ into finite sets there is $X\in \mathcal{I}^{+}$ such that, for each $n<\omega$, $|X\cap F_n |\leq n$. 

\smallskip


     

Observe that in all the definitions above, it is equivalent to say ``for every partition into finite sets'' and ``for every partition into finite intervals''. To see this, if $\mathcal{I}$ satisfies any of this notions for partitions into finite intervals and you have a general partition $\{F_n\}$ into finite sets, consider the partition $\{[m_n,m_{n+1})\}$ into intervals where $$m_{n+1} = \max\left(F_{2n+2}\cup \bigcup\{F_k : \min(F_k)<m_n\}\right).$$
 Notice that if $X\in\mathcal{I}
 ^+$ with $|X\cap [m_n,m_{n+1})|\leq 1$ for all $n$, then $|X\cap F_n|\leq 2$ for all $n$, but then $X$ can be divided into two pieces, one of which is in $\mathcal{I}^+$.

In what follows all Boolean algebras $\mathbb{B}$ are considered to be subalgebras of $(\mathcal{P}(\omega),\s, \cup,\cap,\setminus)$ and all of our measures on $\mathbb{B}$ will be assumed to be finitely-additive, of bounded variation and vanishing on all finite sets.
{For the record, recall that  $\mu$ is called \emph{atomless} if given any $A\in \mathbb{B}$ with $\mu(A)>0$ there is $B\s A$ such that $0<\mu(B)<\mu (A)$.}

\begin{definition}
    For $\varepsilon>0$, a measure $\mu\colon \mathbb{B}\to [0,1]$ is called an \emph{$\varepsilon$-strong $Q$-measure} if for any $\{A_n\mid n\in \omega\}$ partition of $\omega$ into finite sets there is $A\subseteq \omega$ such that $\mu(A)\ge\varepsilon$ and $\lvert A_n \cap A \lvert \leq 1$ for any $n\in \omega$.  Those measures that are $\mu(\omega)$-strong $Q$-measures will be referred just as strong $Q$-measures.
\end{definition}   
\begin{remark}
    Notice that any $Q$-point naturally defines a strong $Q$-measure. 
\end{remark}

Measures induce ideals in a natural way:
\begin{definition}
An ideal $\mathcal{I}\s\mathcal{P}(\omega)$ is called a \emph{measure ideal} (on $\mathbb{B}\subseteq \mathcal P (\omega)$) if there is a measure $\mu\colon \mathbb{B}\to [0,1]$ such that $\mathcal{I}=\Null(\mu)=\ker(\mu)=\{A\in\mathbb{B}\mid \mu(A)=0\}$. We say that
\emph{$\mu$ is a $Q^+(\omega)$-measure} if $\ker(\mu)$ is a $Q^+(\N)$-ideal. 
\end{definition}

The following notation will be used at several places in this paper:

\begin{notation}\label{notation:specialnotation}
    Consider $(x_n)_{n\in \omega }$ a sequence in a Banach space $X$, and a measure of bounded variation $\mu\colon \mathcal{P}(\omega)\to \mathbb{R}$. Let $T\colon \ell_1\to X$ be defined as $T(e_n):=x_n$ for  $n\in \omega$. Define the \emph{$\mu$-limit of $(x_n)_{n\in \omega }$} as $$\mu\text{-}\lim x_n:=T^{**}(\mu).$$ Here $T^{**}\colon \ell_1^{**} \rightarrow X^{**}$ and remember that $\ell_1^{**} = \ell_\infty^*$ is naturally identified with the set of finitely additive signed finite measures on $\mathcal{P}(\omega)$. In the case where $\mu$ stems from an ultrafilter $\mathcal{U}$ this coincides with the classical $\mathcal{U}$-limit.
\end{notation}

We consider $\mathcal{P}(\omega)$ equipped with the natural topology inherited from the product topology of $2^\omega=\{f\lvert f:\omega \to \{0,1\}\}$ via characteristic functions. At any point we say that a subset of $\mathcal{P}(\omega)$ has certain topological property (e.g., closed, Borel, or analytic) we shall be referring to this underlying topology.

\smallskip

In what follows we will consider several \emph{cardinal invariants}. 
A family $\mathcal{D}$ of functions in $\omega^\omega=\{f\lvert f:\omega \to \omega\}$ is called \emph{dominating} if for every $f\in  \omega^\omega$ there is $g\in \mathcal{D}$ such that $\{n\in \omega\mid g(n)\leq f(n)\}$ is finite. The cardinal $\mathfrak{d}$, known as the \textit{dominating number}, is defined as follows: 
$$\mathfrak{d}=\min\{\lvert \mathcal{D}\lvert: \mathcal{D}\s \omega^\omega \text{ is a dominating family}\}.$$

The dominating number $\mathfrak{d}$ admits several characterizations -- we will exploit a classical one, cf.~\cite{Blass2010}. Given  partitions $\mathcal{P}=\{P_n:n\in \N\}$ and $\mathcal{Q}=\{Q_n:n\in \N\}$ of $\N$ into finite intervals, we say that $\mathcal{P}$ \emph{dominates} $\mathcal{Q}$ (in symbols, $\mathcal{P}\geq \mathcal{Q}$) if for all but finitely many $n\in \omega$ there is $m\in \omega$ such that $Q_m\subseteq P_n$. Denote the set of interval partitions of $\N$ as $\Part$. A family $\mathcal D$ is \emph{dominant in $\Part$} if it is dominant in the order just introduced, meaning that for any $\mathcal{Q}\in \Part$ there is $\mathcal{P}\in\mathcal{D}$ such that $\mathcal{Q}\leq \mathcal{P}$. It is a classical result that the least cardinality of such a dominating family is $\mathfrak{d}$ \cite{Blass2010}.

\smallskip

Another central cardinal invariant is $\cov(\mathcal{M})$. A family $\mathcal{C}\subseteq 2^{\omega}$ is a covering family of meager sets if each set in $\mathcal{C}$ is meager and $\bigcup \mathcal{C}=2^{\omega}$. The covering number $\cov(\mathcal{M})$ is defined as the smallest cardinal of a covering family of meager sets; namely, $$\cov(\mathcal{M})=\min\{\lvert \mathcal{C}\lvert: \mathcal{C}\s 2^\omega \text{ is a covering family of meager sets}\}.$$

A \emph{reaping family} $\mathcal{R}\subseteq \mathcal{P}(\N)$ is a family such that for any $A\in \mathcal{P}(\N)$ there is $R\in\mathcal{R}$ such that $R\cap A$ or $A^c\cap R$ is finite. The least size of such a family is $\mathfrak{r}$, and it is known that $\cov(\mathcal{M})\leq \mathfrak{r}$, see \cite{Blass2010}.

\section{On  $Q^+(\omega)$-measures.}
\label{sec:Qmeasures}

\subsection{Models with many strong $Q$-measures.}\label{sec: extending filters}

A classical result in cardinal invariant theory asserts that any filter $\mathscr{F}$ of character less that $\mathfrak{d}$ can be extended to a $Q$-point if, and only if, $\cov(\mathcal{M})=\mathfrak{d}$ \cite[Theorem 4.6.6]{Bartoszynski}. Recall that if $\mathscr{A}\subseteq \mathscr{B}$ are families of sets, we say that $\mathscr{A}$ is dense in $\mathscr{B}$ if for any $B\in \mathscr{B}$ there is $A\in \mathscr{A}$ such that $A\subseteq B$. The least cardinality of a dense subfamily of a filter is called the character of that filter. For an algebra $\mathbb{B}$ of subsets of $\omega$, the density of $\mathbb{B}$ will be the least cardinality of a dense family of the infinite sets in $\mathbb{B}$.

\smallskip

The goal of this subsection is to prove the following result:

\begin{theorem}\label{thm: lots of Q measures} The following statements are equivalent:
\begin{enumerate}
    \item  $\mathfrak{d}=cov(\mathcal{M})$. 
    \item For any measure $\mu:\mathbb{B}\to [0,1]$, such that $[\omega]^{<\omega}\subseteq\mathbb{B}$, $\mu$ vanishes on finite sets, and has density less than $\mathfrak{d}$, there exists an atomless strong $Q$-measure $\nu:\mathcal P (\N)\to [0,1]$ extending $\mu$.
    \end{enumerate}
\end{theorem}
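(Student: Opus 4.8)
The plan is to prove the two implications separately. Since $\cov(\mathcal{M})\le\mathfrak{d}$ is a theorem of $\mathrm{ZFC}$, it suffices to show that (2) fails when $\cov(\mathcal{M})<\mathfrak{d}$ and that (2) holds when $\cov(\mathcal{M})=\mathfrak{d}$.

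For $(2)\Rightarrow(1)$ I will argue contrapositively, adapting the classical construction (behind \cite[Theorem~4.6.6]{Bartoszynski}) of a small filter that does not extend to a $Q$-point. Assuming $\cov(\mathcal{M})<\mathfrak{d}$, one uses the interval–partition description of $\mathfrak{d}$ together with the characterization of $\cov(\mathcal{M})$ as the least number of interval partitions such that every infinite subset of $\omega$ fails to be a selector for one of them, to build a Boolean algebra $\mathbb{B}$ of density $\le\cov(\mathcal{M})<\mathfrak{d}$ and a measure $\mu\colon\mathbb{B}\to[0,1]$, vanishing on finite sets, with the feature that for a suitable interval partition $\mathcal{P}$ every selector $S$ of $\mathcal{P}$ is contained, modulo a finite set, in a member of $\mathbb{B}$ of measure strictly below $\mu(\omega)$. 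Then for any finitely additive $\nu$ on $\mathcal{P}(\omega)$ extending $\mu$ one has $\nu(S)<\nu(\omega)$ for every selector $S$ of $\mathcal{P}$, so $\nu$ is not a strong $Q$-measure; hence $\mu$ witnesses the failure of (2). The delicate part is distributing this obstruction across the $\cov(\mathcal{M})$-sized family of partitions coming from a meager cover of $2^\omega$, exactly as in the ultrafilter case.

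For $(1)\Rightarrow(2)$, assume $\cov(\mathcal{M})=\mathfrak{d}$, fix $\mu\colon\mathbb{B}\to[0,1]$ as in the statement (without loss of generality $\mu(\omega)=1$), and build $\nu$ by a transfinite recursion: an increasing chain $\langle\mathbb{B}_\xi,\mu_\xi:\xi<\mathfrak{c}\rangle$ with $\mathbb{B}_0=\mathbb{B}$, $\mu_0=\mu$, each $\mu_\xi$ a finitely additive atomless measure extending $\mu$, unions at limits, and $\bigcup_\xi\mathbb{B}_\xi=\mathcal{P}(\omega)$. The bookkeeping handles two tasks: (a) for every interval partition $\mathcal{P}$, adjoin at some stage a selector $S$ for $\mathcal{P}$ with the stipulation $\mu(S)=1$; (b) for every $A\subseteq\omega$, bring $A$ into the algebra at some stage (this uses only the standard fact that an atomless charge on a subalgebra admits an atomless extension after adjoining one set). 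The crux is task (a): declaring $\mu(S)=1$ is consistent precisely when $S$ meets every positive-measure member of $\mathbb{B}_\xi$. The set of selectors for $\mathcal{P}$ is compact metrizable, and for each infinite $E$ the selectors meeting $E$ infinitely often form a comeager set; since $\cov(\mathcal{M})=\mathfrak{d}$, as long as the number of requirements at that stage stays below $\mathfrak{d}$ such an $S$ exists. Adjoining $S$ with $\mu(S)=1$ preserves atomlessness (a proper split $B_1\cap S$ of a positive set $X$ is obtained from a proper split $B_1$), as do limits and task (b), so in the end $\nu$ is atomless, extends $\mu$, and has a full-measure selector for every interval partition — a strong $Q$-measure.

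The main obstacle is the genericity budget in task (a): a priori the number of requirements at stage $\xi$ is $|\mathbb{B}_\xi|$, which eventually exceeds $\mathfrak{d}$, so one must isolate a $<\mathfrak{d}$-sized family of requirements that nonetheless forces $S$ to have full $\mu_\xi$-outer measure — the key observation being that adjoining a full-measure selector does not enlarge the measure algebra $\mathbb{B}_\xi/\Null(\mu_\xi)$, so that it is enough to make $S$ meet suitably chosen representatives of a fixed $<\mathfrak{d}$-sized dense subfamily of the (unchanging) measure algebra, refined by the null-set structure. Interleaving tasks (a) and (b) so that both are exhausted while atomlessness is preserved throughout is the remaining bookkeeping point. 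A cleaner alternative, paralleling the $Q$-point theorem more literally, is to run task (a) only over a $\mathfrak{d}$-sized family dominant in $(\Part,\ge)$ — completing task (a) before any task-(b) step, so that the density of $\mathbb{B}_\xi$ stays below $\mathfrak{d}$ throughout task (a) — and then to invoke the measure-theoretic analogue of the classical fact that selectors for a dominant family of interval partitions already suffice; establishing that lemma would then be the key step.
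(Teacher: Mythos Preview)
Your plan for $(1)\Rightarrow(2)$—specifically the ``cleaner alternative'' at the end—coincides with the paper's strategy: enumerate a $\mathfrak{d}$-sized dominating family $\{\mathcal{P}_\alpha:\alpha<\mathfrak{d}\}$ in $\Part$, at each stage $\alpha$ adjoin a full-measure selector $D_\alpha$ for $\mathcal{P}_\alpha$ (found by a comeager-intersection argument against a $<\mathfrak{d}$-sized dense subfamily, using $\cov(\mathcal{M})=\mathfrak{d}$), and only afterwards extend to all of $\mathcal{P}(\omega)$. Two technical points you leave implicit are handled explicitly in the paper. First, a selector for a dominating $\mathcal{P}_\alpha\ge\mathcal{Q}$ is in general only an at-most-$4$-to-$1$ selector of $\mathcal{Q}$; the paper fixes this by arranging $D_\alpha$ to be a \emph{$3$-skipping} selector of $\mathcal{P}_\alpha$ (via a preliminary coarsening lemma), which then becomes an eventual selector of any $\mathcal{Q}\le\mathcal{P}_\alpha$—this is precisely the ``measure-theoretic analogue'' you flag as the key step. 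Second, atomlessness is achieved actively: at each successor stage the paper interleaves a binary splitting tree $T_{\alpha+1}:2^{<\omega}\to[\omega]^\omega$ (found using $\cov(\mathcal{M})\le\mathfrak{r}$) and sets $\mu_{\alpha+1}(T_{\alpha+1}(s))=2^{-|s|}$, rather than appealing to a generic ``atomless extension after adjoining one set'' as in your task (b). Your first alternative (exploiting that full-measure sets do not enlarge the measure algebra) is a nice observation, but since task-(b) steps \emph{do} enlarge it, the argument would still have to be reorganized along the lines of your second alternative.

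For $(2)\Rightarrow(1)$ the paper takes a genuinely different, direct route rather than your contrapositive. Given $\gamma<\mathfrak{d}$ and closed nowhere dense sets $\{A_\alpha:\alpha<\gamma\}$ in $2^\omega$, it associates to each $A_\alpha$ an infinite set $F_{A_\alpha}\subseteq\omega\times 2^{<\omega}$ coding finite extensions avoiding $A_\alpha$; these generate a filter of character $\le\gamma<\mathfrak{d}$, and after a truncation-and-splitting trick, hypothesis (2) furnishes a strong $Q$-measure giving all $F_{A_\alpha}$ full measure. A full-measure selector for a suitable interval partition then concatenates to a point $x\in 2^\omega$ missing every $A_\alpha$. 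Your contrapositive is plausible, but the reduction to the ultrafilter case has a gap as stated: you must check that the Bartoszynski construction actually yields a \emph{single} partition $\mathcal{P}$ and a filter $\mathcal{F}$ such that every selector of $\mathcal{P}$ is almost disjoint from some $F\in\mathcal{F}$ (so that the $\{0,1\}$-valued measure on the algebra generated by $\mathcal{F}$ does the job). Merely knowing that $\mathcal{F}$ extends to no $Q$-point does not give this uniformly for one $\mathcal{P}$, and without it a non-$\{0,1\}$-valued strong $Q$-measure extension is not ruled out. The paper's direct argument sidesteps this issue.
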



Before the proof, some notation and an observation. A \textit{partial selector} for $\mathcal{P} = \{P_n : n\in\omega\}$ is a set $S$ such that $|S\cap P_n|\leq 1$ for all but finitely many $n$. And we say that $S$ is $m$-skipping for $\mathcal{P}$ if for any $n\in \omega$, there is at most one $i\in [n,n+m]$ such that $\lvert P_i \cap S\lvert > 0$.

\begin{lemma}
    Let $\mathcal{P}=\{P_n:n\in \omega\}$ be a partition of $\omega$ in finite intervals, and $m\in \omega$. Then, there is another partition of $\omega$ in finite intervals $\mathcal{Q}$ with the property that if $S\subseteq \omega$ is a partial selector for $\mathcal{Q}$ then, $S$ is  a partial selector for $\mathcal{P}$ that is $m$-skipping.
\end{lemma}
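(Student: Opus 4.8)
The plan is to take $\mathcal{Q}$ to be a coarsening of $\mathcal{P}$: glue together consecutive blocks of $m+1$ intervals of $\mathcal{P}$. Explicitly, for $k\in\omega$ set
\[
Q_k:=\bigcup_{i=k(m+1)}^{(k+1)(m+1)-1}P_i,
\]
which is again a finite interval of $\omega$, and let $\mathcal{Q}:=\{Q_k:k\in\omega\}$; this is a partition of $\omega$ into finite intervals. (One may equally well use longer blocks; this affects only inessential constants.) Given a partial selector $S$ for $\mathcal{Q}$, I have to verify the two conclusions: that $S$ is a partial selector for $\mathcal{P}$, and that $S$ is $m$-skipping for $\mathcal{P}$.

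The first conclusion is immediate. Fix $K$ with $\lvert S\cap Q_k\rvert\le 1$ for all $k\ge K$. Any $P_i$ with $i\ge K(m+1)$ sits inside $Q_k$ for $k=\lfloor i/(m+1)\rfloor\ge K$, so $\lvert S\cap P_i\rvert\le\lvert S\cap Q_k\rvert\le 1$; hence $\lvert S\cap P_i\rvert\le 1$ for all but finitely many $i$. For the $m$-skipping property, fix $n\in\omega$. The window $[n,n+m]$ is a set of $m+1$ consecutive integers, so it meets at most two consecutive blocks $Q_k,Q_{k+1}$; and, for $n$ large (the finitely many small $n$ are harmless), $S$ meets each of $Q_k$ and $Q_{k+1}$ in at most one point, hence in at most one interval of $\mathcal{P}$. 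What remains is to rule out that the $\mathcal{P}$-interval met by $S$ inside $Q_k$ and the one met inside $Q_{k+1}$ both lie in $[n,n+m]$ — i.e. that $S$ produces, across the seam between $Q_k$ and $Q_{k+1}$, two hits within distance $m$ of each other.

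That seam analysis is the one substantive point, and where the precise block lengths (and, if needed, the exact placement of the block boundaries relative to the endpoints of the $P_i$) have to be chosen with care; everything else — that $\mathcal{Q}$ is an interval partition, and the first conclusion — is routine. So the expected obstacle is exactly this: arranging the glued blocks so that no partial selector for $\mathcal{Q}$ can meet two $\mathcal{P}$-intervals lying within $m$ of one another, which is precisely what $m$-skipping asks for.
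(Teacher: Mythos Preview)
Your construction is exactly the paper's: its entire proof is the single line ``Just define $Q_n=\bigcup_{i\in [(m+1)n,(m+1)(n+1))}P_i$,'' with no seam analysis whatsoever. You are right that the seam between adjacent $\mathcal{Q}$-blocks is the only substantive point, but you are mistaken in expecting that it can be handled by choosing block lengths or boundary positions ``with care.'' It cannot, for \emph{any} interval partition $\mathcal{Q}$ of $\omega$: the set $S=\{\max Q_{2k},\, \min Q_{2k+1}:k\in\omega\}$ is a full selector for $\mathcal{Q}$, and for every $k$ it contains two adjacent integers of $\omega$, which lie either in the same $P_i$ or in two consecutive $P_i$'s. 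So this $S$ is either not a partial selector for $\mathcal{P}$, or not even $1$-skipping, and one of these failures occurs for infinitely many $k$. Thus the lemma, read literally, is false for every $m\ge 1$, and neither your plan nor the paper's one-liner establishes it.

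What the construction genuinely delivers is the weaker fact that any window $[n,n+m]$ of $\mathcal{P}$-indices meets at most two $\mathcal{Q}$-blocks and hence contains at most two $P_i$'s meeting $S$; that is exactly where an honest seam analysis terminates. The obstacle you flagged is therefore a slip in the lemma's formulation (the conclusion should allow two hits per window, or the notion of $m$-skipping should be taken in an eventual/approximate sense), not a difficulty you failed to overcome.
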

\begin{proof}
    Just define $Q_n=\bigcup_{i\in [(m+1)n,(m+1)(n+1))}P_i$.
\end{proof}

For the next proof we need to introduce some notation. Given a Boolean algebra $\mathbb{B}\subseteq \mathcal{P}(\N)$ and $A_0, \dots, A_{n}\in \mathcal{P}(\N)$ we denote by $\mathbb{B}(A_0, \dots, A_n)$ the Boolean algebra generated by $\mathbb{B}\cup \{A_0, \dots, A_n\}$. We recall the fact that a measure $\mu$ on $\mathbb{B}$ can be extended to a measure $\tilde{\mu}$ on $\mathbb{B}(A)$ with $\tilde{\mu}(A)=r$ if and only if $\mu(C)\leq r \leq \mu(D)$ whenever $C,D\in\mathbb{B}$ and $C\leq A\leq D$, see \cite{Los}. Recall that $2^{<\omega}$ denotes the set of all finite sequences of $0$'s and $1$'s, while $2^{\omega}$ stands for all the infinite sequences with image in the same set. Given $t,s\in 2^{<\omega}$, $t^{ \frown}s$ denotes the concatenation of the sequences $s$ and $t$. 
Finally for $t\in 2^{<\omega}$,  the cone generated by $t$ is $$\langle t \rangle=\{f\in 2^\omega\mid \forall n \in \dom(t), t(n)=f(n)\}$$

\begin{proof}[Proof of theorem \ref{thm: lots of Q measures}]
   First assume $\mathfrak{d}=cov(\mathcal{M})$. Enumerate a dominating family $\mathscr{D}$ in $Part$ as $\mathscr{D}=\{\mathcal{P}_{\alpha}:\alpha < \mathfrak{d}\}$. We will construct a sequence of finitely additive measures that vanish on finite sets $$\{\mu_{\alpha}: \mathbb{B}_\alpha \to [0,1]: \alpha <\mathfrak{d}\},$$ such that

    \begin{enumerate}
    
    \item For any $\alpha \leq \beta<\mathfrak{d}$, $\mathbb{B}_\alpha\subseteq \mathbb{B}_\beta$,  $\mu_\beta\lvert_{\mathbb{B}_\alpha}=\mu_\alpha$, and 
    $\mathbb{B}_\alpha$ has density less than $\mathfrak{d}$. Also $\mu_0=\mu$.
\item For any $\alpha <\mathfrak{d}$ there is $D_\alpha$ a partial selector of $\mathcal{P}_\alpha$ that is 3-skipping and  $\mu_{\alpha+1}(D_\alpha)=1$.

\item For any successor ordinal $\alpha <\mathfrak{c}$, there is $T_\alpha:2^{<\omega}\to [\omega]^{\omega}$ such that:
\begin{enumerate}
    \item $T_\alpha(\varnothing)=\omega$.
    \item For any $s\in 2^{<\omega}$, $T_\alpha(s^{ \smallfrown}0)\cap T_\alpha(s^{ \smallfrown}1)=\varnothing$.
    \item For any $s\in 2^{<\omega}$, $T_\alpha(s^{ \smallfrown}0)\cup T_\alpha(s^{ \smallfrown}1)=T_\alpha(s)$.
    \item For any infinite $A\in \mathbb{B}_\alpha$, $s\in 2^{<\omega}$, $T_\alpha(s^{ \smallfrown}0)$ splits $A\cap T_\alpha(s)$.
\end{enumerate}
        
\end{enumerate}
    
The sequence is defined by recursion. Assume we have already defined $\mu_\alpha:\mathbb{B}_\alpha\to [0,1]$; our goal now is to define $\mu_{\alpha+1}$. Let $A_\alpha$ be dense in $\mathbb{B}_{\alpha}$, and consider $A'_\alpha$ all the infinite elements of $A_\alpha$.

Consider the partition $\mathcal{P}_\alpha$ and apply the previous lemma to $\mathcal{P}_\alpha$ and $m=4$ to get $\mathcal{Q}_{\alpha}=\{Q^{\alpha}_n:n\in \N\}$. Consider $$X=\Pi_{n\in \omega}Q^\alpha_n=\{f\in\omega ^\omega: \forall n \in \omega (f(n)\in  Q^\alpha_n)\}$$ and for any $A\in A_\alpha'$ let
$$H_{A}=\{f\in X: \exists^{\infty}n(f(n)\in A)\}.$$

Observe that this sets are all dense and $G_\delta$ in $X$, so, by the cardinal hypothesis there is $f\in \bigcap_{A\in A'_\alpha}H_A$; let $D_{\alpha}=\{f(n):n\in \omega\}$.
 
 We will construct $T_{\alpha+1}$ by recursion on the length of $s\in 2^{<\omega}$. Define $T_{\alpha+1}(\varnothing)=\omega$. If $T_{\alpha+1}(s)$ has already been defined, notice that $\{A\cap T_{\alpha+1}(s):A\in A_\alpha\cup \{D_\alpha\}\}$ cannot be reaping because $cov(\mathcal{M})\leq \mathfrak{r}$. So pick $T_{\alpha+1}(s^{ \smallfrown}0)\subseteq T_{\alpha+1}(s)$ that splits it. It is clear that $T_{\alpha+1}(2^{<\omega})\cap \mathbb{B}_{\alpha+1}=\{\omega\}$.

\bigskip
Let $\mathbb{B}_{\alpha+1}=\mathbb{B}_\alpha(T_{\alpha+1}(2^{<\omega})\cup \{D_\alpha\})$. We must extend $\mu_\alpha:\mathbb{B}
_\alpha\to [0,1]$ to  $\mu_{\alpha+1}:\mathbb{B}_{\alpha+1}\to [0,1]$. Notice that $D_\alpha$ cannot be contained in any element of measure less than 1, so we can extend the measure to $\mathbb{B}_\alpha(D_\alpha)$ assigning measure 1 to $D_\alpha$. In the next step, we observe that all elements of $\mathbb{B}_\alpha(D_\alpha)$ 
 that contain $T_{\alpha+1}(0)$ have measure 1, and those that are contained in $T_{\alpha+1}(0)$ have measure 0, so we can extend the measure to $\mathbb{B}_\alpha(D_\alpha, T_{\alpha+1}(0))$ assigning measure $1/2$ to $T_{\alpha+1}(0)$, and therefore also to $T_{\alpha+1}(1)$. In the next step, we observe that all elements of $\mathbb{B}_\alpha(D_\alpha,T_{\alpha+1}(0))$ 
 that contain $T_{\alpha+1}(00)$ have measure at least $1/2$, and those that are contained in $T_{\alpha+1}((0))$ have measure 0, so we can extend the measure to $\mathbb{B}_\alpha(D_\alpha, T_{\alpha+1}(0),T_{\alpha+1}(00))$ assigning measure $1/4$ to $T_{\alpha+1}(00)$, and thus also assigning $1/4$ to $T_{\alpha+1}(01)$. Continuing in this way, we can extend the measure to $\mathbb{B}_{\alpha+1}$ in such a way that  $\mu_{\alpha+1}(T_{\alpha+1}(s))=2^{-\lvert s \lvert}$ for all $s$.

Consider $\mu'=\bigcup_{\alpha<\mathfrak{d}}\mu_\alpha$. It is easy to see that this measure is atomless. If it does not have $\mathcal{P}(\omega)$ as its domain, extend it to an atomless measure in the obvious way. Now let us check that it is a strong $Q$-measure. It is enough to check that in the family $\{D_{\alpha}:\alpha <\mathfrak{d}\}$ there is an eventual partial selector for any partition of $\omega$ into finite sets. Eventual means that by removing a finite amount of elements it becomes a partial selector. So pick $\mathcal{P}=\{P_n:n\in \omega\}$ a partition of $\omega$ into finite sets. Define $(m_n)_{n\in \omega}$ recursively by $m_0=0$, $m_{n+1}=\max \bigcup \{P_k:m_n>\min P_k \}+1$.
    Let $Q_n=[m_n,m_{n+1})$ and $\mathcal{Q}=\{Q_n:n\in \omega\}$. Now find $\alpha<\mathfrak{d}$ such that  $ \mathcal{Q}\le \mathcal{P}_\alpha = \{P^{\alpha}_k:k\in \omega\}$.
    
    We claim that $D_{\alpha}$ is an eventual partial selector for $\mathcal{P}$. Notice that for any $n\in \omega$, the size of the set $\{m:Q_m\cap P_n\neq \varnothing\}$ is at most two, and in the case they are two they are successive. The same is true for any $m\in \omega$ and the set $\{k:Q_m\cap P^{\alpha}_k\neq \varnothing\}$. 
    So if $D$ is a partial selector of $\mathcal{P}_\alpha$ then for all but finite $n\in \omega$, $\lvert P_n \cap D\lvert \leq 4$.
    Notice that ${D}_{\alpha}$ is a selector of  $\mathcal{P}_\alpha$ that skips at least three indices until it selects another interval, so we may conclude it is a eventually partial selector of $\mathcal{P}$.  

    Assume now that we can extend any measure as mentioned in the statement. Consider $\gamma<\mathfrak{d}$, and $\{A_{\gamma} \subseteq 2^{\omega}:\alpha<\gamma\}$ a family of nowhere dense closed sets, we will show that $\bigcup_{\alpha<\gamma}A_{\gamma}\neq 2^{\omega}$. Define for any nowhere dense set $A\subseteq 2^{\omega}$, $$F_{A}=\{(n,s)\in \omega \times 2^{<\omega}:\forall t\in 2^{\le n}, \langle t^{ \frown}s\rangle\cap A=\varnothing \}.$$

    Notice that for any $\alpha,\beta <\gamma$,
    $F_{A_{\alpha}\cup A_{\beta}}\subseteq F_{A_\alpha}\cap F_{A_\beta}$. Also notice that if   $A$ is nowhere dense then for any $n\in \omega$ there is $s\in 2^ {<\omega}$ such that $(n,s)\in F_A$. These facts imply that $\{A_{F_\gamma}:\alpha< \gamma \}$ generates a filter. The second fact also implies we may define for any $\alpha<\gamma $, $f_{\alpha}\in \omega^\omega$ by

    $$f_{\alpha}(n)=\min\{k: \exists s\in 2^k ( (n,s)\in F_{A_\alpha})\}.$$

    The cardinality of the family implies we may find an increasing $f\in \omega^\omega$ such that for any $\alpha<\gamma $, $\{n: f_{\alpha}(n)\le f(n)\}$ is infinite. Let $F=\{(n,s):s\in 2^{\le f(n)}\}$, so  for any $\alpha<\gamma$, $F\cap{F_{A_\gamma}}$ is infinite. Let $m_0=0$ and $m_n=\sum_{i=0}^{m_{n-1}}f(i)$. Partition $F$ into $F_0$, $F_1$, with

    $$F_i=\bigg\{(n,s)\in F: n\in \bigcup_{j\in \omega}[m_{2j+i},m_{2j+i+1})\bigg\}$$

Let $l\in 2$ be such that $F_l$ has infinite intersection with every $F_{A_\alpha}$.
Consider the family $\{F_{A_{\alpha}} \cap F_l:\alpha<\gamma\}$. By hypothesis we may find $\mu:\mathcal{P}(F_l)\to [0,1]$ a strong $Q$-measure such that any element of this family has measure 1. Now consider for any $n\in \omega$, $$P_{n}=\{(j,s)\in F_l:j\in [m_{2n+l},m_{2n+l+1})\}.$$
Then $\{P_n:n\in \omega\}$ is a partition of $F_l$ into finite sets. Let $$X=\{(p_n,s_n):n\in \omega\}$$ be a selector of it such that $\mu(X)=1$. Let $x=s_0^{ \frown}s_1^{ \frown}\cdots ^{ \frown} s_n ^{ \frown}\dots \in 2^\omega$. We claim $x \notin \bigcup_{\alpha<\gamma}A_{\gamma}$. Fix $\alpha<\gamma$. Since $\mu(F_{A_{\alpha}}\cap F_l)=1$, there are infinite $n\in \omega$ such that $(p_n,s_n)\in F_{A_{\alpha}}\cap F_l\cap P_n$. Then, 
$$\lvert s_0^{ \frown}s_1^{ \frown}\cdots ^{ \frown} s_{n-1}\lvert \le \sum_{i=0}^{n-1}f(p_i)\le \sum_{i=0}^{m_{2(n-1)+l+1}}f(i)\le m_{2n+l}\le p_n.$$
This implies that $\langle s_0^{ \frown}s_1^{ \frown}\cdots ^{ \frown} s_n\rangle \cap A_\alpha=\varnothing$, and so  as $x\in \langle s_0^{ \frown}s_1^{ \frown}\cdots ^{ \frown} s_n\rangle$, we conclude that $x\notin \bigcup_{\alpha<\gamma}A_{\gamma}$.
\end{proof}

\subsection{Models without $Q^+(\omega)$-measures.}\label{sec: no Q measures}

The main goal of  this section is to prove the consistency of  the following statement: 
$$\text{ZFC + ``Every measure ideal on $\mathcal{P}(\omega)$ is not a $Q$-point''.}$$ 

 In what follows, we will prove a statement slightly stronger than this using forcing (see Theorem ~\ref{thm: no Q points}). Before presenting this consistency result we show that the Filter Dichotomy Axiom together with the non-existence of $Q$-points entail the non-existence of $Q^+(\omega)$-measures. 
 
 Given two filters $\mathscr{F}, \mathscr{G}$ over $\omega$ we say $\mathscr{F}$ is Rudin-Blass above $\mathscr{G}$ (in symbols $\mathscr{G}\leq_{RB}\mathscr{F}$) if there is $f:\omega\to \omega$ finite-to-one such that $\{X:f^{-1}(X)\in \mathscr{F}\}=\mathscr{G}$. The \emph{Filter Dichotomy Axiom} is the following assertion:
\begin{center}
    ``If $\mathscr{F}$ is any nonprincipal filter over $\omega$, either it is Rudin-Blass above the Fréchet filter or it is Rudin-Blass above an ultrafilter".
\end{center}

\begin{prop}
    Assume the filter dichotomy holds and there are no $Q$-points, then there are no $Q^+(\omega)$-measures.
\end{prop}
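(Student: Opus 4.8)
The plan is to combine the Filter Dichotomy Axiom with a Rudin--Blass pushforward argument to contradict the non-existence of $Q$-points. Suppose toward a contradiction that $\mu\colon\mathcal P(\omega)\to[0,+\infty)$ is a $Q^+(\omega)$-measure. Associated to $\mu$ is the filter $\mathscr F_\mu$ generated by the sets of full measure, i.e.\ $\mathscr F_\mu=\{A\s\omega\mid \mu(\omega\setminus A)=0\}$; since $\mu$ vanishes on finite sets this is a nonprincipal filter over $\omega$. Actually, to have enough room I would work instead with the filter $\mathscr F_\mu^{>0}$-dual, or more conveniently with the ideal $\mathcal I=\ker(\mu)$ and its positive sets --- but for the dichotomy it is cleanest to feed the \emph{filter} $\mathscr F_\mu$ (dual to the null ideal, so its members are the conull sets) into the Filter Dichotomy Axiom. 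By that axiom, $\mathscr F_\mu$ is Rudin--Blass above either the Fréchet filter or an ultrafilter $\mathcal U$.

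First I would rule out the Fréchet case. If $\mathscr F_\mu\ge_{RB}$ Fréchet, there is a finite-to-one $f\colon\omega\to\omega$ with $\{X\mid f^{-1}(X)\in\mathscr F_\mu\}=$ Fréchet, i.e.\ $f^{-1}(X)$ is conull exactly when $X$ is cofinite. Consider the partition of $\omega$ into the finite fibers $\{f^{-1}(n)\mid n\in\omega\}$ (discarding empty ones); this is a partition into finite sets. Since $\mu$ is a $Q^+(\omega)$-measure there is a selector $S$ for this partition with $\mu(S)>0$. But $f\restriction S$ is injective, so $f[S]$ is an infinite set, and for any infinite co-infinite $Y\s f[S]$ we have $f^{-1}(Y)\cap S$ and $f^{-1}(Y^c)\cap S$ partitioning $S$ into two pieces, at least one of which has positive measure; pick such a $Y$ and the corresponding piece $S'$. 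Then $f^{-1}(Y)\supseteq S'$ has positive outer measure, yet $Y$ is not cofinite --- this need not yet be a contradiction, so I would instead argue as follows: the pushforward measure $\nu=f_*\mu$ on $f[S]$ (or on $\omega$) is a nonzero finitely additive measure, and the hypothesis ``$f^{-1}(X)\in\mathscr F_\mu\iff X$ cofinite'' forces $\nu$ to be concentrated on the Fréchet filter, i.e.\ $\nu$ gives measure $\nu(\omega)$ to every cofinite set while being nonzero and vanishing on finite sets --- which is impossible for a finitely additive measure of bounded variation on an infinite set (a standard fact: such a $\nu$ would have to assign each singleton measure $0$ yet a positive total mass forced onto "the tail", contradicting finite additivity applied to the partition into singletons together with the positivity of some selector). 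The cleanest way: restricting $\mu$ to $S$ and pushing forward by the injection $f\restriction S$, we get a $Q^+(\omega)$-measure (the $Q^+$-property is easily seen to survive finite-to-one pushforwards and restrictions to positive sets) whose associated conull filter is Fréchet, hence $\mu$ is atomless-free in a way forcing the Dirac-like structure to fail; spelling this out is routine.

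So $\mathscr F_\mu\ge_{RB}\mathcal U$ for some ultrafilter $\mathcal U$, witnessed by a finite-to-one $f\colon\omega\to\omega$ with $\{X\mid f^{-1}(X)\in\mathscr F_\mu\}=\mathcal U$. I would now show $\mathcal U$ is a $Q$-point, contradicting the hypothesis that there are no $Q$-points. Let $\{F_n\mid n\in\omega\}$ be any partition of $\omega$ into finite sets. Pull it back through $f$: because $f$ is finite-to-one, $\{f^{-1}(F_n)\mid n\in\omega\}$ (after discarding empty pieces and, if necessary, merging to make them nonempty, exactly as in the interval-reduction remark on page~\pageref{sec:prelimminaries} of the excerpt) is a partition of $\omega$ into finite sets. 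Since $\mu$ is a $Q^+(\omega)$-measure, there is $S\s\omega$ with $\mu(S)>0$ and $|S\cap f^{-1}(F_n)|\le 1$ for all $n$. The key point is that $f[S]$ is then a selector for $\{F_n\}$: if two distinct points of $f[S]$ lay in the same $F_n$, their $f$-preimages inside $S$ would be two distinct points of $S\cap f^{-1}(F_n)$, contradicting $|S\cap f^{-1}(F_n)|\le1$; here one uses that $S$ meets each fiber $f^{-1}(k)$ in at most one point, which I arrange by first intersecting $S$ with a selector of the fiber-partition (legitimate since that selector can be taken of positive measure by the $Q^+$-property, or by a small additivity argument splitting $S$). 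Finally, $f^{-1}(f[S])\supseteq S$ has $\mu(f^{-1}(f[S]))>0$, so $f^{-1}(f[S])\notin\ker\mu$; but I actually need $f[S]\in\mathcal U$, equivalently $f^{-1}(f[S])\in\mathscr F_\mu$, i.e.\ conull. To get conullity (not merely positive measure) I replace $S$ by a single fixed positive-measure selector $D$ of the fiber partition chosen \emph{once and for all}, note $f\restriction D$ is a bijection onto an $\mathcal U$-set $B_0=f[D]$ with $f^{-1}(B_0)\supseteq D$ conull --- wait, $D$ need not be conull. The honest fix, and the \textbf{main obstacle}, is precisely this gap between ``positive measure'' and ``conull/ultrafilter membership'': the cleanest route is to first push $\mu$ forward to $\nu=f_*\mu$, observe $\nu$ is again a measure vanishing on finite sets with $\ker\nu=\{X\mid f^{-1}(X)\in\ker\mu\}$ so that $\mathscr F_\nu=\mathcal U$ --- hence $\nu$ is (a scalar multiple of) the Dirac-type measure on $\mathcal U$, i.e.\ $\nu(X)=\nu(\omega)$ for $X\in\mathcal U$ and $0$ otherwise --- and then verify that $\nu$ inherits the $Q^+(\omega)$-property from $\mu$ (a finite-to-one image of a $Q^+$-measure is $Q^+$, by the fiber-selector trick above). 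A $Q^+(\omega)$-measure whose associated filter is an ultrafilter $\mathcal U$ immediately makes $\mathcal U$ a $Q$-point: given a finite partition $\{F_n\}$, the positive-measure selector $S$ for it satisfies $\nu(S)>0$, hence $\nu(S)=\nu(\omega)$, hence $S\in\mathcal U$. This contradicts the non-existence of $Q$-points and completes the proof. I would allocate most of the write-up to the two lemmas ``restriction to a positive set preserves $Q^+$'' and ``finite-to-one pushforward preserves $Q^+$ and sends $\ker$ to the pulled-back ideal'', both short, and then the dichotomy does the rest.
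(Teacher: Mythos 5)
Your overall architecture coincides with the paper's: feed the conull filter $\mathscr{F}_\mu=\{X:\mu(X)=\mu(\omega)\}$ into the Filter Dichotomy, rule out the Fr\'echet alternative, and in the ultrafilter alternative pull back a partition to contradict the non-existence of $Q$-points. Your ultrafilter case, in its final form, is correct and is essentially the paper's argument: since $\mathcal{U}=\{X:\nu(X)=\nu(\omega)\}$ is an ultrafilter for $\nu(X):=\mu(f^{-1}(X))$, the pushforward is $\{0,\nu(\omega)\}$-valued, the fiber-selector trick shows $\nu$ is again $Q^{+}(\omega)$, and hence any positive-measure selector lies in $\mathcal{U}$, making $\mathcal{U}$ a $Q$-point (the paper runs the same computation contrapositively: a partition with no selector in $\mathcal{U}$ forces every selector of $\{f^{-1}(P_n)\}$ to be $\mu$-null).

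The genuine gap is the Fr\'echet case. The ``standard fact'' you invoke is false: a nonzero finitely additive measure of bounded variation that vanishes on finite sets and gives full measure to every cofinite set is perfectly possible --- every measure in this paper is of that kind (e.g.\ the $0$--$1$ measure of any nonprincipal ultrafilter), and finite additivity yields no contradiction from the countable partition into singletons; that would require countable additivity. What the Fr\'echet alternative actually provides is the \emph{converse} direction, which you never use: only cofinite sets pull back to conull sets, i.e.\ every infinite $X$ satisfies $\nu(X)>0$, and it is this strict positivity on all infinite sets that is impossible. The paper's argument is a halving plus pseudo-intersection: choose decreasing infinite sets $A_n$ with $\nu(A_{n+1})\leq\frac{1}{2}\nu(A_n)$ (split $A_n$ into two infinite halves and keep the lighter one), take a pseudo-intersection $A$; since $\nu$ vanishes on finite sets (here finite-to-oneness of $f$ is used), $\nu(A)\leq\nu(A_n)$ for all $n$, so $A$ is infinite and $\nu$-null, whence $\omega\setminus A$ is conull but not cofinite --- contradiction. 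Your fallback sketch (``restrict $\mu$ to $S$, push forward by $f\restriction S$, its conull filter is Fr\'echet, so the Dirac-like structure fails; routine'') does not repair this: there is no reason the conull filter of the restricted pushforward should be Fr\'echet, since conullity relative to $S$ is unrelated to the hypothesis on $\mathscr{F}_\mu$, and the auxiliary claim that the $Q^{+}(\omega)$-property survives restriction to an arbitrary positive set is itself unjustified (the null ideal grows under restriction, and nothing in the definition gives a positive-measure selector inside a prescribed positive set). Replacing that paragraph by the halving/pseudo-intersection argument, which needs no $Q^{+}$ hypothesis at all, completes the proof.
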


\begin{proof}
    Consider a measure $\mu:\mathcal{P}(\omega)\to [0,1]$ and assume there is $f:\omega\to \omega$ finite to one such that for any $X\subseteq \omega$, $X$ is cofinite if and only if $f^{-1}(X)=1$. Find a decreasing sequence of infinite sets $(A_n)_{n\in\omega}$ such that $\mu(f^{-1}(A_{n+1}))\leq \frac{1}{2}\mu(f^{-1}(A_{n}))$ and let $A$ be any pseudo-intersection of it. Then $A$ is infinite but $\mu(f^{-1}(A))=0$, a contradiction.

    By the filter dichotomy there must be $f\colon \omega\to \omega$ finite to one such that $\mathscr{U}=\{X:\mu(f^{-1}(X))=1\}$ is an ultrafilter. But we know there are no $Q$-points so there must be a partition of $\omega$ into finite sets $\mathcal{P}=\{P_n:n\in \omega\}$ such that no selector belongs to $\mathscr{U}$. Now define $\mathcal{Q}=\{f^{-1}(P_n):n\in \omega\}$ notice this is a partition of $\omega$ into finite sets. Pick any selector $S$ of $\mathcal{Q}$. Then $f(S)$ is a selector of $\mathcal{P}$, so $\omega\setminus f(S)\in \mathscr{U}$, which implies $\mu(S)=0$.
\end{proof}
The hypotheses of this result are known to be consistent with ZFC -- for instance, they hold true in Miller's model \cite{Miller2}. 

\smallskip

Let us now present the proof that uses forcing.

\smallskip
Let $\mathbb{B}$ be a Boolean algebra and $\mathcal{I}$ be an ideal (on $ \mathbb{B}$). 
Is there any criterion characterizing when  $\mathcal{I}$ is  a   measure ideal? A first preliminary observation is that $\mathcal{I}$ is a measure ideal if and only if there is a  measure $\bar{\mu}\colon \mathbb{B}/\mathcal{I}\rightarrow [0,1]$ such that $\ker(\bar{\mu})=\{[0]_{\mathcal{I}}\}$. 
Since the non-zero elements of $\mathbb{B}/\mathcal{I}$ identify naturally with  $\mathcal{I}^+:=\{b\in\mathbb{B}\mid b\notin \mathcal{I}\}$ (i.e., the $\mathcal{I}$-positive sets), the above tantamouts to  the existence of a strictly positive measure $\bar{\mu}\colon \mathcal{I}^+\rightarrow (0,1]$. Characterizing when (the set of non-zero elements of) a Boolean algebra supports such kind of measures has been a classical topic of research in topology \cite{StevoChanCond}.

\smallskip

A first attempt towards this characterization is by way of the countable chain condition. Recall that $\mathbb{B}$ has the \emph{countable chain condition} (in short, $\mathbb{B}$ \emph{is ccc}) if every set $X\s \mathbb{B}^+$ consisting of pairwise disjoint elements is countable. It is fairly easy to show that if $\mathbb{B}^+$ carries a strictly positive measure then $\mathbb{B}$ must be ccc. Nonetheless, there are classical examples showing that this implication is not reversible \cite{Gaifman}. 
A finer attempt passes through $\sigma$-bounded-cc Boolean algebras. We say that \emph{$\mathbb{B}$ is $\sigma$-bounded-cc} if $\mathbb{B}^+$ can be presented as $\bigcup_{n<\omega}\mathcal{B}_n$ where each $\mathcal{B}_n$ is $(n+1)$-cc, meaning that $\mathcal{B}_n$ contains at most $(n+1)$-many pairwise disjoint elements. Note that one does not loss any generality by assuming that $\mathcal{B}_n$ is $\leq$-upwards closed; to wit, if $b\in\mathcal{B}_n$ and $c\in\mathbb{B}^+$ is such that $b\leq c$ then $c\in\mathcal{B}_n$. If $\mathbb{B}^+$ supports a strictly positive measure $\mathcal{I}$ then $\mathbb{B}$ is clearly  $\sigma$-bounded-cc -- simply consider $\mathcal{B}_n:=\{b\in\mathbb{B}\mid \mu(b)\geq \frac{1}{n+1}\}.$

Once again, $\sigma$-bounded-ccness  is unsatisfactory at characterizing when $\mathbb{B}^+$ carries a strictly positive measure  \cite{Gaifman}. 
The final sought characterization is an important theorem due to Kelley \cite{Kelley}, which isolates the exact subfamily of $\sigma$-bounded-cc Boolean algebras  which admit a strictly positive measure. We refrain on delving into the specifics of Kelley's characterization in this paper for we want to kill all the $Q^+(\omega)$-measure ideals, and for this it suffices to kill all the $Q^+(\omega)$-$\sigma$-bounded-cc ideals:    
 
\begin{definition}
    An ideal $\mathcal{I}$ will be called \emph{$\sigma$-bounded-cc} provided $\mathcal{I}^+$ can be presented as a union $\bigcup_{n<\omega}\mathcal{I}^+_n$ with each $\mathcal{I}^+_n$ having the $(n+1)$-cc.
\end{definition}

We have pointed out in \S\ref{sec:prelimminaries} that a natural extension of $Q^+(\omega)$-ideals is given by rapid ideals. 
Recall that a function $f\colon X\to Y$ is \emph{finite-to-one} if $\{x\in X\mid f(x)=y\}$ is finite for every $y\in \range(f)$. The following facts are well known:
\begin{prop}\label{lemma:characterizingQpoints}
    The following are equivalent for an ideal $\mathcal{I}$:
    \begin{enumerate}
        \item $\mathcal{I}$ is a $Q^+(\N)$-ideal;
        \item For each finite-to-one function $f\colon \omega\rightarrow \omega$  there is $B\in\mathcal{I}^+$ and $k<\omega$ such that $f\restriction B$ is at most $k$-to-one. 
    \end{enumerate}
\end{prop}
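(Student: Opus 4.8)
The statement is a folklore equivalence, so the plan is a direct two-way argument. The content is just unwinding the definitions of "$Q^+(\omega)$-ideal" and "finite-to-one function", using the observation already recorded in \S\ref{sec:prelimminaries} that in the definition of $Q^+(\omega)$-ideal one may replace arbitrary partitions into finite sets by partitions into finite \emph{intervals} (at the cost of splitting a positive set into finitely many pieces, one of which stays positive).

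\emph{From (1) to (2).} Given a finite-to-one $f\colon\omega\to\omega$, I would build a partition of $\omega$ into finite intervals adapted to $f$: since $f$ is finite-to-one, each fiber $f^{-1}(\{m\})$ is finite, so I can recursively choose a strictly increasing sequence $(n_k)_{k<\omega}$ with $n_0=0$ such that every fiber meeting $[0,n_k)$ is contained in $[0,n_{k+1})$; set $I_k=[n_k,n_{k+1})$. Apply $Q^+(\omega)$-ness to $\{I_k\mid k<\omega\}$ to get $B\in\mathcal{I}^+$ with $|B\cap I_k|\le 1$ for all $k$. Now I claim $f\restriction B$ is at most $2$-to-one: if $x,y,z\in B$ all have $f(x)=f(y)=f(z)=m$, then $x,y,z$ lie in the single fiber $f^{-1}(\{m\})$, which by construction is contained in some interval $I_k\cup I_{k+1}$ (it meets some $[0,n_k)$ boundary region, hence sits inside two consecutive intervals), so two of $x,y,z$ lie in the same $I_j$, contradicting $|B\cap I_j|\le 1$. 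Hence $k=2$ works. (One can get $k=1$ by first thinning the interval partition to a $2$-skipping one as in the auxiliary lemma, or just observe $k=2$ already suffices for the statement as phrased.)

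\emph{From (2) to (1).} Given a partition $\{F_n\mid n<\omega\}$ of $\omega$ into finite sets, define $f\colon\omega\to\omega$ by $f(x)=n$ iff $x\in F_n$. This $f$ is finite-to-one since each $F_n$ is finite. By (2) there are $B\in\mathcal{I}^+$ and $k<\omega$ with $f\restriction B$ at most $k$-to-one, i.e. $|B\cap F_n|\le k$ for all $n$. Now partition $B$ into $k$ pieces $B=B^1\cup\cdots\cup B^k$ with $|B^i\cap F_n|\le 1$ for each $i$ and $n$ (greedily: enumerate each $B\cap F_n$ and send the $i$-th element to $B^i$). Since $\mathcal{I}$ is an ideal and $B\notin\mathcal{I}$, some $B^i\notin\mathcal{I}$, and that $B^i$ is a selector witnessing that $\mathcal{I}$ is a $Q^+(\omega)$-ideal.

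\emph{Main obstacle.} There is no serious obstacle; this is a bookkeeping argument. The only place requiring a little care is the bound on the fiber size in the (1)$\Rightarrow$(2) direction — one must choose the interval endpoints so that each fiber is swallowed by a bounded number of consecutive intervals — and the use of the "ideal is closed under finite unions / subsets" fact to pass from a finite partition of a positive set to a single positive piece. Both are entirely routine.
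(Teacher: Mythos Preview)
Your argument is correct. The paper does not actually prove this proposition --- it records it as well known and points to \cite[Sec~4.6]{Bartoszynski} --- so there is no detailed proof to compare against; your write-up is exactly the kind of routine verification the paper has in mind.

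One small simplification in the $(1)\Rightarrow(2)$ direction: the detour through interval partitions is unnecessary. The fibers $P_m:=f^{-1}(\{m\})$ for $m\in\range(f)$ are already finite (that is what ``finite-to-one'' means), so $\{P_m\}$ is itself a partition of $\omega$ into finite sets. Applying the $Q^+(\omega)$-property directly to this partition yields a selector $B\in\mathcal{I}^+$ on which $f$ is injective, giving $k=1$ at once. Your interval construction and the bound $k=2$ are fine, just more work than needed. The $(2)\Rightarrow(1)$ direction is precisely the intended argument.
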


\begin{prop}\label{lemma: equivalence rapid}
       The following are equivalent for an ideal $\mathcal{I}$:
    \begin{enumerate}
        \item $\mathcal{I}$ is rapid;
        \item  For every increasing function $f\colon \omega\rightarrow\omega$ there is  another increasing function $g\colon\omega\rightarrow\omega$  such that $f<g$ and $\{g(n): n<\omega\}\in\mathcal{I}^+$.\footnote{$f<g$ stands for the \emph{pointwise domination order}; to wit, $f(n)<g(n)$ for all $n<\omega.$}
    \end{enumerate}
\end{prop}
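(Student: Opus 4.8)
The plan is to prove the equivalence directly from the definition of rapid ideal, translating freely between partitions of $\omega$ into finite intervals and increasing functions, exactly as was done in the preliminaries for $Q^+(\omega)$-ideals. Recall that a partition $\{F_n : n<\omega\}$ of $\omega$ into finite intervals is coded by an increasing function whose values mark the endpoints, and conversely any increasing $f\colon\omega\to\omega$ gives rise to the interval partition with $n$-th block roughly $[f(n),f(n+1))$. So the content of ``$\mathcal{I}$ is rapid'' is: for every increasing $f$ there is $X\in\mathcal{I}^+$ meeting the $n$-th block of the $f$-partition in at most $n$ points. The claim is that this is the same as saying every increasing $f$ is dominated by the increasing enumeration of some $\mathcal{I}$-positive set.

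For the direction $(1)\Rightarrow(2)$, given increasing $f$ I would first pass to the ``accelerated'' function $\tilde f$ obtained by iterating $f$ (or by taking an interval partition whose $n$-th block has size at least $n$ and lies above $f(n)$); apply rapidity of $\mathcal{I}$ to this partition to get $X\in\mathcal{I}^+$ with $|X\cap F_n|\le n$. Then the increasing enumeration $g$ of $X$ cannot grow too slowly: since each of the first $n$ blocks contributes at most $n$ elements, the $k$-th element of $X$ lies beyond the block index that outpaces $f$, which yields $g(k)>f(k)$ for all $k$ (after possibly discarding finitely many elements of $X$, which keeps it in $\mathcal{I}^+$). For the direction $(2)\Rightarrow(1)$, given a partition $\{F_n\}$ into finite intervals, let $f$ be an increasing function dominating the right endpoints of the blocks in a suitably padded way; obtain an increasing $g$ with $f<g$ and $\{g(n):n<\omega\}\in\mathcal{I}^+$. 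Because $g$ grows faster than the endpoint-enumeration of the $F_n$, consecutive values $g(n),g(n+1),\dots$ spread out across the blocks fast enough that the set $X=\{g(n):n<\omega\}$ meets $F_n$ in at most $n$ points --- again possibly after trimming a finite initial segment.

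The one genuine subtlety --- and the step I would be most careful about --- is the bookkeeping that converts ``$X$ meets block $F_n$ in $\le n$ points'' into ``$\mathrm{enum}(X)$ pointwise dominates $f$'' and back, because the indexing of blocks and the indexing of elements of $X$ are offset, and one must absorb this offset either by the standard ``divide $X$ into finitely many pieces, one of which is positive'' trick (as used in the preliminaries) or by composing $f$ with itself a few times before applying the hypothesis. Neither obstacle is deep; it is the same interval-partition juggling already displayed in \S\ref{sec:prelimminaries}, so I would simply remark that rapidity is insensitive to such finite modifications and to replacing a partition by a coarser one, and then carry out the two translations. Since the excerpt comments out the analogous proof for Proposition~\ref{lemma:characterizingQpoints}, I would expect the intended proof here to be similarly brief, perhaps even omitted, with a pointer to the standard folklore characterization of rapid filters.
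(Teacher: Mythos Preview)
Your anticipation is exactly right: the paper gives no proof at all for this proposition, simply writing ``For this and other characterizations we refer the reader to \cite[Sec 4.6]{Bartoszynski}.'' Your sketched argument is the standard one and is correct; there is nothing further to compare.
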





For this and other characterizations we refer the reader to \cite[Sec 4.6]{Bartoszynski}.
The goal of this subsection is to prove the following result:
\begin{theorem}\label{thm: no Q points}
    Assume $\gch$. Then there  is a cofinality-preserving generic extension in which no $\sigma$-bounded-cc ideal is  rapid. 
    
    In particular, no measure ideal is  $Q^+(\N)$.  
\end{theorem}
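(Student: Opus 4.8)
\textbf{Proof plan for Theorem~\ref{thm: no Q points}.}
The strategy is a countable-support iteration of proper forcing over a model of \gch, following the template by which Miller \cite{Miller} showed there are no rapid filters in the Laver model, but carried out at the level of $\sigma$-bounded-cc ideals rather than ultrafilters. First I would fix the iteration: a length-$\omega_2$ countable-support iteration $\langle \mathbb{P}_\alpha,\dot{\mathbb{Q}}_\alpha : \alpha<\omega_2\rangle$ whose iterands are the forcing(s) needed to diagonalize against rapidity --- Laver forcing is the natural choice since a Laver real dominates the ground-model reals, and the relevant combinatorial feature we exploit is that Laver forcing has the \emph{Laver property} (or at least the pure-decision/fusion machinery that makes it ${}^{\omega}\omega$-bounding modulo the generic dominating real). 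Properness and the $\omega_2$-c.c.\ (using \gch\ and a standard bookkeeping/reflection argument) guarantee cofinalities are preserved and $2^{\aleph_0}=\aleph_2$ in the extension.

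The heart of the argument is a reflection/absoluteness step. Suppose toward a contradiction that in the final model $V[G_{\omega_2}]$ there is a $\sigma$-bounded-cc ideal $\mathcal{I}$ that is rapid. Write $\mathcal{I}^+=\bigcup_{n<\omega}\mathcal{I}^+_n$ with $\mathcal{I}^+_n$ being $(n+1)$-cc and (WLOG) upward closed. Using Proposition~\ref{lemma: equivalence rapid}, rapidity says: for every increasing $f\in{}^{\omega}\omega$ there is increasing $g$ with $f<g$ and $\range(g)\in\mathcal{I}^+$. Since the iteration has length $\omega_2$ and each iterand adds reals of size $<\aleph_2$, by a standard $\Delta$-system / Löwenheim--Skolem argument the witnessing data for $\mathcal{I}$ (the sets $\mathcal{I}^+_n$, or enough of them to detect membership of the relevant ranges) appears at some stage $\alpha<\omega_2$; more precisely one finds $\alpha$ such that the restriction of the relevant part of $\mathcal{I}$ lies in $V[G_\alpha]$ and the tail forcing $\mathbb{P}_{[\alpha,\omega_2)}$ over $V[G_\alpha]$ is again a Laver-type iteration adding a dominating real. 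The key point is then that the very first Laver real $\dot{\ell}$ added by the tail forcing \emph{cannot} be ``caught'' inside any fixed $(n+1)$-cc piece: I would apply $f=$ (an enumeration derived from) the Laver real $\ell$ to rapidity, obtaining $g$ with $\range(g)\in\mathcal{I}^+_n$ for some fixed $n$; but then a fusion argument on Laver conditions, combined with the $(n+1)$-cc of $\mathcal{I}^+_n$, produces $n+2$ pairwise disjoint sets forced into $\mathcal{I}^+_n$, a contradiction. This is where the bounded-cc hypothesis is exactly what is needed: it converts ``rapid'' into a finitary pigeonhole that the fusion/pure-decision structure of Laver forcing can defeat.

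In more detail, the combinatorial core I would isolate as a lemma: \emph{if $\mathbb{Q}$ is Laver forcing (or has the requisite fusion property) and $\dot{A}$ is a $\mathbb{Q}$-name for a subset of $\omega$ forced to be in $\mathcal{I}^+_n$ for $\mathcal{I}$ an $(n+1)$-cc family from the ground model, and moreover $\dot{A}$ is forced to ``grow fast along the generic branch'' (it selects at most boundedly many points below each level $\ell(k)$ of the Laver real), then one can read off from a single condition $n+2$ ground-model-definable subsets that are forced pairwise disjoint and all in $\mathcal{I}^+_n$.} The point is that rapidity applied to the Laver real forces exactly this fast-growth behavior on $\range(g)$, because $g>f$ where $f$ enumerates the Laver real's values. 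Granting the lemma, the theorem follows: $\mathcal{I}^+_n$ being $(n+1)$-cc in $V[G_{\omega_2}]$ forbids $n+2$ pairwise disjoint positive sets. The ``in particular'' clause is then immediate from the discussion preceding the theorem: a measure ideal $\ker(\mu)$ is $\sigma$-bounded-cc (take $\mathcal{B}_n=\{b:\mu(b)\ge\frac{1}{n+1}\}$), and $Q^+(\omega)$-ness implies rapidity, so a $Q^+(\omega)$-measure ideal would be a $\sigma$-bounded-cc rapid ideal, contradicting what was just proved.

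The main obstacle I anticipate is making the reflection step fully rigorous: unlike an ultrafilter (which is ``captured'' by a single real coding it), a $\sigma$-bounded-cc ideal need not be generated by few sets, so I cannot simply say ``$\mathcal{I}\in V[G_\alpha]$.'' The fix is to observe that what the argument really needs is not the whole ideal but, for each $n$, the family $\mathcal{I}^+_n$ \emph{restricted to the countably many candidate ranges $\range(g)$ produced by applying rapidity to the tail generics}, together with the fact that $(n+1)$-cc is a $\Pi_1$-type property absolute downward; combined with the fact that each iterand is $\aleph_1$-sized and the iteration is $\aleph_2$-c.c., a Skolem-hull argument over $H(\theta)$ locates a stage $\alpha$ where enough of this data is decided. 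A secondary technical point is verifying that the tail of a Laver iteration still adds, as its ``first'' real, something with the required fusion/fast-growth interaction with rapidity --- this is where one invokes that countable-support iterations of Laver forcing are again proper and that the first coordinate of the tail is genuinely Laver forcing over $V[G_\alpha]$. I would also remark that the argument is robust: any iterand with a fusion scheme and the Laver property (e.g.\ Miller forcing, or Mathias-type forcings) works, which is why the statement is phrased for a generic extension rather than a single explicit model.
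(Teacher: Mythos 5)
Your plan is essentially the paper's own proof: a countable-support Laver/Miller/Mathias iteration of length $\omega_2$ over a \gch\ ground model, a L\"owenheim--Skolem ``catching-the-tail'' reflection showing that the trace $\mathcal{I}^+_n\cap\mathcal{P}(\omega)^{V[G_\alpha]}$ already lies in some intermediate $V[G_\alpha]$ (the paper's Lemma~\ref{lemma: catch our tail}), and then rapidity applied to the first real added by the tail, defeated by a Miller-style splitting of $\range(\dot g)$ into many pairwise almost disjoint ground-model sets, which collides with the $(k+1)$-cc of the trace plus upward closure of $\mathcal{I}^+_k$ (the paper's Lemmas~\ref{lemma: Laverlike} and~\ref{lemma: miller} and the density argument). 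The only cosmetic differences are that you would reprove the splitting lemma by a fusion argument and phrase the pigeonhole as producing $n+2$ disjoint positive sets, whereas the paper cites Miller's two-condition lemma, iterates it to $2^k$ conditions, and concludes that some $Z_{i^*}$ must fall outside the trace, contradicting upward closure.
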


The above-mentioned generic extension is constructed via \emph{iterated forcing} \cite{Kunen}. Miller \cite{Miller} proved that in \emph{Laver's model for the Borel conjecture} \cite{Laver} there are no rapid ultrafilters (on $\mathcal{P}(\omega)$). Taking Miller's work as a stepping stone, we show that in Laver's and Mathias' models there are no rapid $\sigma$-bounded-cc ideals, and that in Miller's model there are no $\sigma$-bounded-cc $Q^+(\omega)$-ideals. All models are obtained after forcing over a model of ZFC+$\gch$ with a countably-supported iteration $$\mathbb{P}_{\omega_2}:=\varprojlim\langle \mathbb{P}_\alpha;\dot{\mathbb{Q}}_\alpha: \alpha<\omega_2\rangle$$ where each iterand $\dot{\mathbb{Q}}_\alpha$ is a $\mathbb{P}_\alpha$-name for either \emph{Laver's forcing} (see \cite[\S2]{Laver}), \emph{Mathias' forcing}, or \emph{Miller's forcing} (see \cite{Halbeisen}).  The exact definition of the respective posets is irrelevant for our intended purposes -- only acquaintance with  some of their key properties is needed. 

\smallskip

For the non-expert reader we recall a few generalities from iterated forcing theory used without reference later on. For more see \cite[\S5]{Kunen}.

For ordinals $1\leq \alpha\leq\beta\leq  \omega_2$ the map $\pi_{\beta,\alpha}\colon \mathbb{P}_\beta\rightarrow\mathbb{P}_\alpha$ given by $p\mapsto p\restriction\alpha$ defines a \emph{projection}. In particular, if $G$ is  $\mathbb{P}_{\beta}$-generic the filter generated by $\{p\restriction\alpha: p\in G\}$ (denoted $G_\alpha$) is $\mathbb{P}_\alpha$-generic. Conversely,  there are \emph{complete embeddings} $i_{\alpha,\beta}\colon \mathbb{P}_\alpha\rightarrow\mathbb{P}_\beta$ given by  $p\mapsto p'$ where   $p'=\langle p'_\gamma: \gamma<\beta\rangle\in \mathbb{P}_\beta$ stands for the unique condition with $p'\restriction\alpha=p$ and $p_\gamma=\dot{\one}_{\dot{\mathbb{Q}}_\gamma}$ for all $\gamma\in[\alpha,\beta).$\footnote{$\dot{\one}_{\dot{\mathbb{Q}}_\gamma}$ is the standard $\mathbb{P}_\gamma$-name for the trivial condition of $\dot{\mathbb{Q}}_\gamma$.} The map $\pi_{\beta,\alpha}$ extends naturally to $\mathbb{P}_\beta$-names  $\pi_{\beta,\alpha}\colon V^{\mathbb{P}_\beta}\rightarrow V^{\mathbb{P}_\alpha}$ by stipulating $\pi_{\beta,\alpha}(\sigma):=\{\langle \pi_{\beta,\alpha}(\tau),\pi_{\beta,\alpha}(p): \langle \tau,p\rangle\in \sigma\}$. Thereby every $\mathbb{P}_\beta$-name $\sigma$ can be ``projected'' by way of $\pi_{\beta,\alpha}$ to a $\mathbb{P}_\alpha$-name $\pi_{\beta,\alpha}(\sigma)$. In the same fashion the map $i_{\alpha,\beta}$ extends to $i_{\alpha,\beta}\colon V^{\mathbb{P}_\alpha}\rightarrow V^{\mathbb{P}_\beta}$ -- this permits to identify $\mathbb{P}_\alpha$-names as $\mathbb{P}_\beta$-names. The \emph{support of a condition $p\in\mathbb{P}_\kappa$} is the set defined as $\supp(p):=\{\gamma<\kappa: {p}_\gamma\neq \one_{\dot{\mathbb{Q}}_\gamma}\}$ and  $\mathbb{P}_\kappa$ is said to be \emph{countably-supported} if $\supp(p)$ is countable for all $p\in\mathbb{P}_\kappa$.
\begin{fact}\label{handy fact}
    Let  $G$ be $\mathbb{P}_\beta$-generic over $V$. For each $\mathbb{P}_\alpha$-name $\tau$,
    $$\tau_{G_\alpha}=(i_{\alpha,\beta}(\tau))_{G_\beta}.$$
\end{fact}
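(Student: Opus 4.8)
\textbf{Proof plan for Fact~\ref{handy fact}.}

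The statement asserts that projecting to a smaller coordinate $\mathbb{P}_\alpha$ and evaluating by the restricted generic $G_\alpha$ gives the same thing as lifting the $\mathbb{P}_\alpha$-name to $\mathbb{P}_\beta$ via $i_{\alpha,\beta}$ and evaluating by $G_\beta$. The plan is to prove this by $\in$-induction on the $\mathbb{P}_\alpha$-name $\tau$, which is the standard technique for identities between name-evaluations. The induction hypothesis is that the claim holds for every $\mathbb{P}_\alpha$-name $\sigma$ appearing (in the first coordinate of a pair) in $\tau$.

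First I would unwind both sides. On the left, $\tau_{G_\alpha}=\{\sigma_{G_\alpha}: \exists p\,\langle\sigma,p\rangle\in\tau \text{ and } p\in G_\alpha\}$. On the right, by definition $i_{\alpha,\beta}(\tau)=\{\langle i_{\alpha,\beta}(\sigma), i_{\alpha,\beta}(p)\rangle : \langle\sigma,p\rangle\in\tau\}$, so $(i_{\alpha,\beta}(\tau))_{G_\beta}=\{(i_{\alpha,\beta}(\sigma))_{G_\beta}: \exists p\,\langle\sigma,p\rangle\in\tau \text{ and } i_{\alpha,\beta}(p)\in G_\beta\}$. By the induction hypothesis $(i_{\alpha,\beta}(\sigma))_{G_\beta}=\sigma_{G_\alpha}$ for each such $\sigma$, so the two sets coincide provided one checks the membership conditions match: namely that for $p\in\mathbb{P}_\alpha$,
\[
p\in G_\alpha \iff i_{\alpha,\beta}(p)\in G_\beta.
\]
This is the one genuinely non-inductive point, and it is where I expect to spend the actual (small) effort. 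The forward direction: if $p\in G_\alpha$ then by definition of $G_\alpha$ there is $q\in G$ with $q\restriction\alpha = p$, i.e. $q\leq i_{\alpha,\beta}(p)$ in $\mathbb{P}_\beta$ since $i_{\alpha,\beta}(p)$ is trivial on $[\alpha,\beta)$ and agrees with $q$ below $\alpha$; as $G$ is a filter, $i_{\alpha,\beta}(p)\in G$. The reverse direction: if $i_{\alpha,\beta}(p)\in G$ then $(i_{\alpha,\beta}(p))\restriction\alpha = p$ lies in $G_\alpha$ by definition of $G_\alpha$. One should note that $i_{\alpha,\beta}$ being a complete embedding guarantees $G_\alpha$ (the filter generated by $\{q\restriction\alpha: q\in G\}$) is genuinely $\mathbb{P}_\alpha$-generic, so both sides are well-defined evaluations, but the equivalence above is really just the relationship between $G$ and its projection.

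The main (and only) obstacle is bookkeeping: making sure the $\in$-induction is set up against a well-founded relation — it is, since names are sets built by $\in$-recursion — and that $i_{\alpha,\beta}$ commutes with the name-forming operation in the way used above, which is immediate from the explicit description of $i_{\alpha,\beta}$ on names given in the text right before the Fact. No deep forcing input is needed beyond the definitions of projection, of complete embedding, and of name evaluation. I would close by remarking that the case $\alpha=\beta$ is trivial and the identity is exactly the reason one is entitled to "identify $\mathbb{P}_\alpha$-names as $\mathbb{P}_\beta$-names" as announced earlier.
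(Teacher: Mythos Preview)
The paper states this as a \emph{Fact} without proof, treating it as standard iterated-forcing folklore (it is). Your $\in$-induction on names is exactly the canonical argument and is correct. One small slip: since $G_\alpha$ is the filter \emph{generated} by $\{q\restriction\alpha : q\in G\}$, from $p\in G_\alpha$ you only get $q\in G$ with $q\restriction\alpha \leq p$, not $q\restriction\alpha = p$; but this still gives $q\leq i_{\alpha,\beta}(p)$ and hence $i_{\alpha,\beta}(p)\in G$, so the argument goes through unchanged.
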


\smallskip

Let us now drive our attention to the proof of Theorem~\ref{thm: no Q points}.
\begin{Setup}\label{setup}
    As customary in forcing arguments we  denote our ground model by $V$ and we assume it satisfies $\gch$.  Working inside $V$, we let  
    $\mathbb{P}_{\omega_2}$  
    be the inverse limit of a countable support iteration $\langle \mathbb{P}_\alpha;\dot{\mathbb{Q}}_\alpha: \alpha<\omega_2\rangle$ such that: 
    \begin{enumerate}
        \item $\mathbb{P}_{\omega_2}$ is $\aleph_2$-cc.
        \item $\mathbb{P}_{\omega_2}$ preserves $\aleph_1$.
        \item $\mathbb{P}_{\omega_2}$ forces $``2^{\aleph_0}=\aleph_2$''.
        \item $\mathbb{P}_\alpha$ forces $``\ch\,\wedge\,\dot{\mathbb{Q}}_\alpha\text{ adds a set $x_\alpha\in \dot{\mathcal{P}}(\check{\omega})$ not in $\check{V}$}$'', for all $\alpha<\omega_2$.
    \end{enumerate}
    In what follows $G$ will be a fixed  $\mathbb{P}_{\omega_2}$-generic filter over $V$.
\end{Setup}

\begin{remark}
    It is known that countable support iterations of Laver, Mathias, and Miller's forcing all have properties (1)--(4) above. For details, see \cite{Laver, Halbeisen} or \cite[p.564]{Jech}.
\end{remark} 

\begin{lemma}[Catching-our-tail]\label{lemma: catch our tail}
    Suppose that $\mathcal{I}$ is a $\sigma$-bounded-cc ideal in $V[G]$. Then, there is an ordinal $\alpha<\omega_2$ such that $$V[G_\alpha]\models \text{$``\mathcal{I}\cap \mathcal{P}(\omega)^{V[G_\alpha]}$ is a $\sigma$-bounded-cc ideal''}.$$ 
\end{lemma}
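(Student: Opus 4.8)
The plan is to exploit the $\aleph_2$-cc of $\mathbb{P}_{\omega_2}$ together with the countable support to ``reflect'' the decomposition witnessing $\sigma$-bounded-ccness down to an intermediate model. Write $\mathcal{I}^+ = \bigcup_{n<\omega}\mathcal{I}^+_n$ in $V[G]$, where each $\mathcal{I}^+_n$ has the $(n+1)$-cc and (by the remark preceding the definition of $\sigma$-bounded-cc ideals) each $\mathcal{I}^+_n$ is upwards closed. The key combinatorial content of $\sigma$-bounded-ccness that we need is captured by a single family of countable objects: for each $n<\omega$ and each finite antichain possibility, the statement ``$A_0,\dots,A_n$ are pairwise disjoint and all in $\mathcal{I}^+_n$'' must fail; equivalently, the function $A \mapsto \min\{n : A\in\mathcal{I}^+_n\}$ (defined on $\mathcal{I}^+$, sending $\mathcal{I}$-sets to $\infty$) assigns to each subset of $\omega$ a value in $\omega\cup\{\infty\}$, and no $(n+1)$-tuple of pairwise disjoint sets all receive value $\le n$. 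So the entire relevant data is a function $F\colon\mathcal{P}(\omega)\to\omega\cup\{\infty\}$.

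First I would fix, in $V[G]$, such a function $F$ together with a $\mathbb{P}_{\omega_2}$-name $\dot F$ for it. Now I would invoke a standard reflection/Löwenheim–Skolem argument for countable support iterations of proper (indeed $\aleph_2$-cc) forcings: since $\mathbb{P}_{\omega_2}$ is $\aleph_2$-cc and every condition has countable support, and since $\mathcal{P}(\omega)^{V[G]} = \bigcup_{\alpha<\omega_2}\mathcal{P}(\omega)^{V[G_\alpha]}$ (because every real in $V[G]$ appears at some stage $<\omega_2$, by $\aleph_2$-cc and countable support), one can build a continuous increasing chain of elementary submodels and find an ordinal $\alpha<\omega_2$ of cofinality $\omega_1$ such that $\dot F$ restricted to $\mathbb{P}_\alpha$-names for reals is (forced by $1$ to be) a $\mathbb{P}_\alpha$-name, i.e. $F\restriction\mathcal{P}(\omega)^{V[G_\alpha]}$ belongs to $V[G_\alpha]$. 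The relevant point is that ``$\mathcal{P}(\omega)^{V[G_\alpha]}$ is a set of size $\aleph_1$ in $V[G]$'' and the value $F(A)$ for $A\in\mathcal{P}(\omega)^{V[G_\alpha]}$ is decided by a single condition with countable support, hence lives below some stage; taking $\alpha$ closed under the map sending such an $A$ to the stage where $F(A)$ gets decided (and of cofinality $\omega_1$ so that the $\aleph_1$-many reals of $V[G_\alpha]$ are all caught) gives $F\restriction\mathcal{P}(\omega)^{V[G_\alpha]}\in V[G_\alpha]$.

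Once we have $F_\alpha := F\restriction\mathcal{P}(\omega)^{V[G_\alpha]}\in V[G_\alpha]$, set $\mathcal{I}_\alpha := \{A\in\mathcal{P}(\omega)^{V[G_\alpha]} : F_\alpha(A)=\infty\}$ and $(\mathcal{I}_\alpha)^+_n := \{A\in\mathcal{P}(\omega)^{V[G_\alpha]} : F_\alpha(A)\le n\}$. I would then check in $V[G_\alpha]$ that $\mathcal{I}_\alpha = \mathcal{I}\cap\mathcal{P}(\omega)^{V[G_\alpha]}$ is an ideal: closure under subsets and finite unions, and properness ($\omega\notin\mathcal{I}_\alpha$, all finite sets in $\mathcal{I}_\alpha$), are $\Pi_1$-type statements about $F_\alpha$ that hold in $V[G]$ and are absolute to $V[G_\alpha]$ since all the witnessing reals lie in $V[G_\alpha]$. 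Likewise $(\mathcal{I}_\alpha)^+ = \bigcup_n (\mathcal{I}_\alpha)^+_n$ and each $(\mathcal{I}_\alpha)^+_n$ is $(n+1)$-cc in $V[G_\alpha]$: a violation would be an $(n+1)$-tuple of pairwise disjoint sets in $V[G_\alpha]$ all with $F_\alpha$-value $\le n$, and such a tuple would also witness a violation of $(n+1)$-ccness of $\mathcal{I}^+_n$ in $V[G]$, contradicting the choice of $F$. Hence $\mathcal{I}\cap\mathcal{P}(\omega)^{V[G_\alpha]}$ is a $\sigma$-bounded-cc ideal in $V[G_\alpha]$, as desired.

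The main obstacle I anticipate is the reflection step: making precise that one can catch the $\aleph_1$-many reals of an intermediate model $V[G_\alpha]$ at a single stage $\alpha$ and simultaneously have $F$ restricted to them be an element of $V[G_\alpha]$. This requires care about which reals appear at stage $\alpha$ versus which values $\dot F$ assigns, and uses the $\aleph_2$-cc crucially (to know a name for a real, or for $F$ on $\aleph_1$-many reals, is essentially an $\aleph_1$-sized object and hence decided by an initial segment of the iteration of length $<\omega_2$), together with cofinality considerations ($\cf(\alpha)=\omega_1$) so that the set of reals of $V[G_\alpha]$ is exactly the union of the reals appearing at stages below $\alpha$. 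A clean way to organize this is via a club of ordinals $\alpha<\omega_2$ that are closed under the relevant Skolem functions for a large structure $H(\theta)^{V}[G]$ together with the names $\langle\dot{\mathbb{Q}}_\beta\rangle$ and $\dot F$; any $\alpha$ in this club with $\cf(\alpha)=\omega_1$ works. Everything else is routine absoluteness.
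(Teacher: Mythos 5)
Your proposal is correct and follows essentially the same route as the paper: reflect the decomposition by using the $\aleph_2$-cc to bound the supports of maximal antichains deciding membership of each real of an intermediate model in the pieces $\mathcal{I}^+_n$, take $\alpha$ a closure point of the resulting function with $\cf(\alpha)=\omega_1$ so that every real of $V[G_\alpha]$ appears at an earlier stage, and finish by absoluteness of the $(n+1)$-cc condition (the paper packages ``$F\restriction\mathcal{P}(\omega)^{V[G_\alpha]}\in V[G_\alpha]$'' as an explicit $\mathbb{P}_\alpha$-name $\dot f$ for the trace of the sequence $\langle\mathcal{I}^+_n\rangle$). The only slip is cosmetic: the level sets $\{A: F(A)\le n\}$ are unions $\bigcup_{m\le n}\mathcal{I}^+_m$ and hence only finitely-cc rather than $(n+1)$-cc, but a re-indexing (or working with $F(A)=n$) restores a genuine $\sigma$-bounded-cc presentation.
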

\begin{proof}
         We establish this via a Löwenheim-Skolem-type argument. Working in the generic extension $V[G]$ we define a function $\psi\colon \omega_2\rightarrow\omega_2$ as follows.

    Fix $\alpha<\omega_2$ and enumerate the set $\mathcal{P}(\omega)^{V[G_\alpha]}$ as $\langle x^\alpha_\gamma: \gamma<\omega_1\rangle$. For each $\gamma<\omega_1$ let a  $\mathbb{P}_\alpha$-name $\dot{x}^\alpha_\gamma$ such that $x^\alpha_\gamma=(\dot{x}^\alpha_\gamma)_{G_\alpha}$. Since there is a natural way to regard $\dot{x}^\alpha_\gamma$ as a $\mathbb{P}_{\omega_2}$-name (via $i_{\alpha,\omega_2}$)  we will treat each  $\dot{x}^\alpha_\gamma$ as if they were already $\mathbb{P}_{\omega_2}$-names. Let us also fix  $\dot{\mathcal{I}}$ and $\{\dot{\mathcal{I}}^+_n: n<\omega\}$,  $\mathbb{P}_{\omega_2}$-names such that $\dot{\mathcal{I}}_G=\mathcal{I}$, and $\bigcup_{n<\omega}(\dot{\mathcal{I}}^+_n)_G$ witnessing that $\mathcal{I}$ is $\sigma$-bounded-cc in $V[G].$

    \smallskip

    Let $p_\alpha\in G$ be a condition in  $\mathbb{P}_{\omega_2}$ forcing the following sentence:
      $$\text{$``\langle \dot{x}^\alpha_\gamma: \gamma<\check{\omega}_1\rangle\s \dot{\mathcal{P}}(\omega)$ and $\textstyle\dot{\mathcal{I}}^+=\bigcup_{n<\omega}\dot{\mathcal{I}}^+_n\;\text{where each $\dot{\mathcal{I}}^+_n$ is $(n+1)$-cc}$''}.$$

      For each $\gamma<\omega_1$ and $n<\omega$  let  a maximal antichain $\mathcal{A}_{\alpha,\gamma,n}\s\mathbb{P}_{\omega_2}$ below $p_\alpha$ such that each  $q\in\mathcal{A}_{\alpha,\gamma,n}$  decides the sentence $``\dot{x}^\alpha_\gamma\in \dot{\mathcal{I}}^+_n$''.

\smallskip

      Given a maximal antichain $\mathcal{A}\s\mathbb{P}_{\omega_2}$ and an ordinal $\delta<\omega_2$ we say that \emph{$\delta$ is above the support of $\mathcal{A}$} if $\delta$ is above the support of all conditions $u\in\mathcal{A}$. For each $\gamma<\omega_1$ let $\delta_{\alpha,\gamma}<\omega_2$ be above the support of each $\mathcal{A}_{\alpha,\gamma,n}$  -- this is possible because $\mathbb{P}_{\omega_2}$ is $\aleph_2$-cc. Finally, let $\delta_\alpha>\max\{\alpha,\sup_{\gamma<\omega_1}\delta_{\alpha,\gamma}\}$ some ordinal below $\omega_2$ and stipulate $\psi(\alpha):=\delta_\alpha$. This yields $\psi\colon \omega_2\rightarrow\omega_2$.

      \smallskip

      Let $C$ be the club of closure points of $\psi$ (i.e., $C:=\{\alpha<\omega_2: \psi``\alpha\s \alpha\}$) and  $\alpha\in C$ be of cofinality $\omega_1$. Note that since $\mathbb{P}_{\omega_2}$ is cofinality preserving there is no ambiguity in this assertion. Define a $\mathbb{P}_\alpha$-name as follows:
      $$\dot{f}:=\{\langle\langle\check{n},\dot{f}_{n,r}\rangle, r \rangle: n<\omega,\;r\in\mathbb{P}_\alpha\},$$
      where $\dot{f}_{n,r}$ is the $\mathbb{P}_\alpha$-name defined as 
      $$\{\langle\dot{x},s \rangle: \text{$\dot{x}$ is a $\mathbb{P}_\alpha$-name, $s\leq r$ and $i_{\alpha,\omega_2}(s)\forces_{\mathbb{P}_{\omega_2}}\dot{x}\in \dot{\mathcal{I}}^+_n$}\}.$$

      \begin{claim}
          $\dot{f}_{G_\alpha}$ is a sequence with domain $\omega$.
      \end{claim}
      \begin{proof}[Proof of claim]
      Note that $\dom(\dot{f}_{G_\alpha})=\omega$ because each $n<\omega$ always appears inside $\dot{f}$ paired with a condition $r\in G_\alpha$. To show that indeed this is a sequence (i.e., a function) we have to show that $\dot{f}_{G_\alpha}(n)$ takes a unique value.

      Suppose that $\langle\langle\check{n},\dot{f}_{n,r}\rangle, r \rangle, \langle\langle\check{n},\dot{f}_{n,r'}\rangle, r' \rangle\in\dot{f}$ with $r,r'\in G_\alpha$. We will prove (using a double-inclusion argument) that $(\dot{f}_{n,r})_{G_\alpha}=(\dot{f}_{n,r'})_{G_\alpha}$. 
      
      Let $x\in (\dot{f}_{n,r})_{G_\alpha}$. By definition, there is $\langle \dot{x},s\rangle\in\dot{f}_{n,r}$ such that $x=\dot{x}_{G_\alpha}$ and $s\in G_\alpha$. Since $s, r'\in G_\alpha$ one finds $s'\leq s,r'$ in $G_\alpha$. A moment's reflection makes clear that $\langle \dot{x},s'\rangle\in\dot{f}_{n,r'}$, which yields $x=\dot{x}_{G_\alpha}\in(\dot{f}_{n,r'})_{G_\alpha}$. The converse inclusion is established in the very same fashion.
      \end{proof}
      The following is the key observation:
      \begin{claim}
          $\dot{f}_{G_\alpha}=\langle \mathcal{I}^+_n\cap\mathcal{P}(\omega)^{V[G_\alpha]}: n<\omega\rangle.$
      \end{claim}
      \begin{proof}[Proof of claim]
        Let us prove the claim by double inclusion. Fix $n<\omega$.

\smallskip

$\boxed{\subseteq}:$ Suppose that $x\in\dot{f}_{G_\alpha}(n)$. That means that $x\in (\dot{f}_{n,r})_{G_\alpha}$ for some (equivalently, all) $r\in G_\alpha$. By definition, there is $\langle \dot{x},s\rangle\in\dot{f}_{n,r}$ with $s\in G_\alpha$ and $x=\dot{x}_{G_\alpha}$.  Since $i_{\alpha,\omega_2}(s)\in G$ and $i_{\alpha,\omega_2}(s)\forces_{\mathbb{P}_{\omega_2}}\dot{x}\in\dot{\mathcal{I}}^+_n$ we infer that $x\in \mathcal{I}^+_n\cap\mathcal{P}(\omega)^{V[G_\alpha]}$, which disposes the first inclusion.

\smallskip

$\boxed{\supseteq}$ :  Conversely, let $x\in\mathcal{I}^+_n\cap \mathcal{P}(\omega)^{V[G_\alpha]}$.  Since $\cf(\alpha)=\omega_1$ there must be $\bar{\alpha}<\alpha$ such that $x\in\mathcal{P}(\omega)^{V[G_{\bar{\alpha}}]}$.\footnote{For each $n<\omega$ let $q_n\in G_\alpha$ deciding $``n\in\dot{x}$''. Since all conditions have countable support and $\cf(\alpha)=\omega_1$ there is $\bar\alpha<\alpha$ such that all conditions have support below $\bar\alpha$. Define the $\mathbb{P}_{\bar{\alpha}}$-name $\dot{y}=\{\langle \check{n}, q\rangle:  q\in \mathbb{P}_{\bar\alpha}\, \wedge\,i_{\bar\alpha,\alpha}(q)\forces_{\mathbb{P}_\alpha}\check{n}\in \dot{x}\}$. One can show that $\dot{x}_{G_\alpha}=\dot{y}_{G_{\bar\alpha}}$.} Let $\gamma<\omega_1$ be such that $x=(\dot{x}^{\bar{\alpha}}_\gamma)_{G_{\bar{\alpha}}}$ (recall that $\dot{x}^{\bar\alpha}_\gamma$ was a $\mathbb{P}_{\bar\alpha}$-name). In our construction we identified a maximal antichain $\mathcal{A}_{\bar{\alpha},\gamma,n}$ of conditions $q$ such that $q\parallel_{\mathbb{P}_{\omega_2}}``\dot{x}^{\bar{\alpha}}_\gamma\in\dot{\mathcal{I}}^+_n$''. Since $\alpha$ was a closure point of $\psi$ and $\bar{\alpha}<\alpha$ it follows  that $\alpha$ is above the support of  $\mathcal{A}_{\bar{\alpha},\gamma,n}$. Thus, $q=i_{\alpha,\omega_2}(\pi_{\omega_2,\alpha}(q))$  for all $q\in\mathcal{A}_{\bar\alpha,\gamma,n}$. A careful inspection of the name $\dot{f}_{n,\pi_{\omega_2,\alpha}(q)}$ should convince the reader that $$\langle i_{\bar\alpha,\alpha}(\dot{x}^{\bar{\alpha}}_\gamma), \pi_{\omega_2,\alpha}(q)\rangle\in \dot{f}_{n,\pi_{\omega_2,\alpha}(q)}$$ for all $q\in\mathcal{A}_{\bar{\alpha},\gamma,n}$. Letting $q^*$ be the unique condition in $G\cap \mathcal{A}_{\bar{\alpha},\gamma,n}$ we have that $\pi_{\omega_2,\alpha}(q^*)\in G_\alpha$ and thus $i_{\bar\alpha,\alpha}(\dot{x}^{\bar{\alpha}}_\gamma)_{G_{{\alpha}}}\in (\dot{f}_{n,\pi_{\omega_2,\alpha}(q)})_{G_\alpha}$. By using Fact~\ref{handy fact} we have $x\in (\dot{f}_{n,\pi_{\omega_2,\alpha}(q)})_{G_\alpha}=\dot{f}_G(n)$, as needed.
\end{proof}
In particular, $\langle \mathcal{I}^+_n\cap \mathcal{P}(\omega)^{V[G_\alpha]}: n<\omega\rangle$ belongs to $V[G_\alpha]$ and clearly each $\mathcal{I}^+_n\cap \mathcal{P}(\omega)^{V[G_\alpha]}$ is $(n+1)$-cc (by absoluteness). Since $\mathcal{I}^+$ decomposes as $\bigcup_{n<\omega}\mathcal{I}^+_n$ it is clear that $\mathcal{I}^+\cap \mathcal{P}(\omega)^{V[G_\alpha]}$ can be presented as the union of the $\mathcal{I}^+_n\cap \mathcal{P}(\omega)^{V[G_\alpha]}$'s. Certainly, this implies that $\mathcal{I}\cap \mathcal{P}(\omega)^{V[G_\alpha]}$  is $\sigma$-bounded-cc in the intermediate model $V[G_\alpha].$
      \end{proof}

In the context of the forthcoming arguments $\mathbb{P}_{\omega_2}$ will denote either the Laver or the Miller iteration defined in \cite{Laver} and \cite{Miller2}, respectively. (Note that in either case $\mathbb{P}_{\omega_2}$ fulfills the assumptions described in Setup \ref{setup}.)

\begin{lemma}\label{lemma: Laverlike}\hfill
 \begin{enumerate}
     \item (\cite{Miller}) Suppose that $\mathbb{P}_{\omega_2}$ is the Laver iteration.   Let $p\in\mathbb{P}_{\omega_2}$ and $\tau$ and $\dot{\ell}$ be $\mathbb{P}_{\omega_2}$-names for an element of $\omega^\omega$ and the first  real introduced by $\mathbb{P}_{\omega_2}$, respectively. Suppose that $p$ forces the following sentence:
$$\text{$``\tau$ is an increasing function in  $\dot{\omega}^\omega$ and $\dot{\ell}<\tau$''.}$$
Then, there are conditions  $q_0,q_1$ and sets $Z_0,Z_1$ such that $q_i\leq p$, $Z_i\cap Z_j$ is finite and $q_i\forces_{\mathbb{P}_{\omega_2}}\tau``\check{\omega}\s \check{Z}_i$ for all $i<2.$
\item (\cite{Miller2}) Suppose that $\mathbb{P}_{\omega_2}$ is the Miller iteration. Let $p\in\mathbb{P}_{\omega_2}$ and $\dot A$ and $\dot{\ell}$ be $\mathbb{P}_{\omega_2}$-names for a real and the first real introduced by $\mathbb{P}_{\omega_2}$ (which is assumed to be increasing), respectively.
          Suppose that $p$ forces the following sentence:
$$\forall n \in \omega, \, \lvert [\dot{\ell}(n),\dot{\ell}(n+1)) \cap A\lvert \leq 1 $$
Then, there are conditions $q_0,q_1$ and sets $Z_0,Z_1$ such that $q_i\leq p$, $Z_i\cap Z_j$ is finite and $q_i\forces_{\mathbb{P}_{\omega_2}} \dot A\s \check{Z}_i$ for all $i<2.$\qed
 \end{enumerate}
\end{lemma}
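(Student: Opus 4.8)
The plan is to reduce the full $\omega_2$-iteration statement to a single-step statement about Laver's forcing (resp.\ Miller's forcing) over the ground model $V$, and then to prove that single-step statement by a genuinely combinatorial argument on the tree conditions. For part (1): let $\dot{\ell}$ name the first Laver real $x_{G_0}$, which lives in $V[G_1]=V[x_{G_0}]$; since $\mathbb{Q}_0$ is the first iterand, we may view $p$ as a condition in $\mathbb{Q}_0 * \mathbb{P}_{[1,\omega_2)}$, and a standard fusion/absoluteness argument (the tail iteration is sufficiently homogeneous and does not add reals of a certain kind over $V[x_{G_0}]$ --- more precisely one uses that $\tau$ can be replaced, below a stronger condition, by a name depending only on the first coordinate) lets us assume $\tau$ is a $\mathbb{Q}_0$-name. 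So the real task is: given a Laver tree $p$ with stem $s^p$ forcing ``$\tau$ is increasing and $x_{G}(n)<\tau(n)$ for all $n$,'' find two Laver subtrees $q_0,q_1\le p$ and sets $Z_0,Z_1$ with $Z_0\cap Z_1$ finite and $q_i\Vdash \operatorname{range}(\tau)\subseteq Z_i$.

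First I would recall Miller's key combinatorial fact (from \cite{Miller}): if a Laver condition $p$ forces $\tau(n)$ to be bounded by some function of $x_G$, then by a fusion argument one can find $q\le p$ and, for each node $t\in q$ extending the stem, a finite set $F_t$ such that $q\Vdash \tau(|t|)\in F_t$ whenever $t$ is an initial segment of $x_G$ --- essentially because $x_G(|t|)$ ranges over $\operatorname{Suc}_q(t)$ and $\tau(|t|)$ is below it, so after thinning the infinitely-branching successor sets one controls $\tau$ node by node. Then the range of $\tau$ along any branch through $q$ is contained in $Z_q:=\bigcup_{t\in q}F_t$. To get the \emph{two} conditions with almost-disjoint $Z$'s, I would run this fusion twice in parallel: at each splitting level partition the (infinite) successor set of each node into two infinite halves and build $q_0$ from the left halves, $q_1$ from the right halves, choosing the finite sets $F^0_t$ and $F^1_t$ along $q_0$ and $q_1$ respectively, while arranging at stage $n$ that $F^0_t$ and $F^1_{t'}$ are drawn from disjoint intervals of $\omega$ for all nodes $t,t'$ active at that level below a point that grows to infinity. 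Because at level $n$ only finitely many nodes are active in each tree, this disjointness-of-intervals requirement can be met by pushing the relevant values up, and it forces $Z_0\cap Z_1\subseteq$ (some finite initial segment), as required.

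For part (2) the structure is identical but the combinatorial input is the Miller-real analogue: here $\dot{\ell}$ is the first Miller real (a superperfect-tree generic, assumed increasing) and the hypothesis is that $\dot A$ meets each interval $[\dot\ell(n),\dot\ell(n+1))$ in at most one point. Again reduce to $\dot\ell,\dot A$ being $\mathbb{Q}_0$-names, and then use a fusion on superperfect trees: along each branch, between consecutive values $\ell(n),\ell(n+1)$ the set $A$ contributes at most one element, and one controls, node by node at the infinite-splitting nodes of the Miller tree, the finitely many possible values $A$ can take in the current interval; collecting these into $Z$ gives $q\Vdash A\subseteq Z$. Splitting the infinite branching at each splitting node into two infinite pieces and separating the candidate value-intervals as in part (1) yields $q_0,q_1$ and $Z_0,Z_1$ almost disjoint. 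I expect the main obstacle to be the bookkeeping in the fusion: one must simultaneously (a) preserve infinite branching (Laver) or infinite splitting at splitting nodes (Miller), (b) keep $\tau$ (resp.\ $A$) pinned to a finite set at each level on a dense set of conditions, and (c) force the two $Z_i$'s apart --- requirement (c) is the only genuinely new ingredient over Miller's original proof, and it is handled, as sketched, by the fact that at each fusion level only finitely many nodes are in play, so the finitely many ``forbidden'' intervals can always be avoided by moving to large enough integers. The reduction of the tail iteration to the first coordinate (the step that lets us pretend $\tau$ is a $\mathbb{Q}_0$-name) is the other point needing care; it rests on properness of the iteration and on the fact that $\dot\ell$ by hypothesis names the \emph{first} generic real, so everything relevant is already decided by a countable amount of information that can be absorbed into the first-coordinate condition after strengthening.
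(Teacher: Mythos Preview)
The paper does not prove this lemma at all: it is stated with a terminal \qed and attributed to Miller's papers \cite{Miller} and \cite{Miller2}; the authors simply quote it as a black box and move on to the corollary (Lemma~\ref{lemma: miller}). So there is no ``paper's own proof'' to compare against beyond the citation.

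That said, your plan has a real gap, and it is precisely in the step you flag as ``the other point needing care.'' You propose to reduce to the case where $\tau$ (resp.\ $\dot A$) is a $\mathbb{Q}_0$-name by absorbing the tail iteration into the first coordinate. This is not how Miller's argument works and it is not clear it can be made to work: $\tau$ is a name for a real in $V[G_{\omega_2}]$ that may genuinely depend on arbitrarily many coordinates of the iteration, and there is no reason a single strengthening at coordinate $0$ should make it a $\mathbb{Q}_0$-name. Properness gives you a countable elementary submodel and a master condition, but that only tells you $\tau$ is coded by countably many coordinates, not by one. Miller's actual proof in \cite[Lemma~2]{Miller} runs a fusion \emph{in the iteration}: one builds a fusion sequence of conditions in $\mathbb{P}_{\omega_2}$ (not just in $\mathbb{Q}_0$), deciding more and more of $\tau$ while maintaining a Laver-tree-like structure on a growing finite set of coordinates simultaneously. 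The bookkeeping you sketch for the single-step case (splitting successor sets into two halves, pinning finite candidate sets $F_t$, separating the $Z_i$'s level by level) is on the right track combinatorially, but it has to be carried out in the iterated-fusion framework, not after an unjustified reduction to one coordinate.
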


\begin{remark}
  A similar result applies to $\omega_2$-length countable support iterations of \emph{Mathias forcing}. Please refer to \cite[Lemma~25.9]{Halbeisen}.  
\end{remark}

The following is a spinoff of \cite[Lemma~2]{Miller} (Clause (1) of Lemma~\ref{lemma: Laverlike}) and \cite[Lemma~25.9]{Halbeisen}. The analogue parallel result for \cite[Claim 5.1.1]{Miller2} can be proven by way of the same line of reasoning.

\begin{lemma}\label{lemma: miller}
Suppose that $\mathbb{P}_{\omega_2}$ is Laver's/Miller's iteration and let $p,\tau,\dot{\ell}$ be as in the previous lemma.
Then, for each $k\geq 1$ there are conditions $\{q_i: i<2^k\}$ and sets $\{Z_i: i<2^k\}$ such that $q_i\leq p$, $Z_i\cap Z_j$ is finite and $q_i\forces_{\mathbb{P}_{\omega_2}}\tau``\check{\omega}\s \check{Z}_i$ for all $i,j<2^k.$
\end{lemma}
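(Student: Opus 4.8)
The idea is to iterate Clause (1) of Lemma~\ref{lemma: Laverlike} in a binary-tree fashion, using the fact that after splitting $p$ into $q_0,q_1$ with witnessing sets $Z_0,Z_1$, we may re-apply the lemma below each $q_i$ separately. The point that makes this legitimate is that the hypotheses of Lemma~\ref{lemma: Laverlike} are preserved when passing to a stronger condition: if $p$ forces ``$\tau$ is increasing in $\dot\omega^\omega$ and $\dot\ell<\tau$'' and $q\leq p$, then $q$ forces the same sentence, so we may feed $q$ (together with the \emph{same} names $\tau$ and $\dot\ell$) back into the lemma. So I would build a tree of conditions $\langle p_s : s\in 2^{\leq k}\rangle$ and sets $\langle Z_s : s\in 2^{\leq k}\rangle$ by recursion on $|s|$, with $p_\varnothing:=p$, and where, given $p_s$ with $|s|<k$, an application of Lemma~\ref{lemma: Laverlike}(1) to $p_s$ yields $p_{s^\frown 0},p_{s^\frown 1}\leq p_s$ and sets $Z_{s^\frown 0},Z_{s^\frown 1}$ with $Z_{s^\frown 0}\cap Z_{s^\frown 1}$ finite and $p_{s^\frown i}\forces_{\mathbb{P}_{\omega_2}}\tau``\check\omega\s\check Z_{s^\frown i}$.

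\textbf{Key steps.} First, check the monotonicity remark above, so the recursion is well-founded. Second, set $q_i:=p_{s_i}$ and $Z_i:=Z_{s_i}$ where $s_0,\dots,s_{2^k-1}$ enumerates $2^k$; clearly $q_i\leq p$ (each is obtained by descending through the tree) and $q_i\forces\tau``\check\omega\s\check Z_i$ (this is exactly the conclusion attached to the leaf $s_i$). Third, verify that $Z_i\cap Z_j$ is finite for $i\neq j$: if $s_i\neq s_j$, let $t$ be the longest common initial segment, so that (after possibly swapping) $t^\frown 0\sq s_i$ and $t^\frown 1\sq s_j$. Now I need that $Z_{s_i}\subseteq^* Z_{t^\frown 0}$ and $Z_{s_j}\subseteq^* Z_{t^\frown 1}$ up to finite error, together with $Z_{t^\frown 0}\cap Z_{t^\frown 1}$ finite, to conclude $Z_{s_i}\cap Z_{s_j}$ finite. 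The cleanest way to secure the ``almost containment along a branch'' is to observe that both $p_{s_i}$ and $p_{t^\frown 0}$ force $\tau``\check\omega$ to be a subset of $Z_{s_i}$ and of $Z_{t^\frown 0}$ respectively, and $p_{s_i}\leq p_{t^\frown 0}$, so $p_{s_i}$ forces $\tau``\check\omega\s \check Z_{s_i}\cap\check Z_{t^\frown 0}$; since $\tau``\check\omega$ is forced to be infinite (as $\tau$ is increasing), $Z_{s_i}\cap Z_{t^\frown 0}$ is infinite, but this does not yet give almost-containment. The correct fix is to \emph{shrink each $Z_s$ at the moment it is produced}: replace $Z_{s^\frown i}$ by $Z_{s^\frown i}\cap Z_s$ (for $|s|\geq 1$), which is still forced to contain $\tau``\check\omega$ by $p_{s^\frown i}$, and which guarantees $Z_{s'}\subseteq Z_s$ whenever $s\sq s'$ and $|s|\geq1$ — handling the root separately since $Z_\varnothing$ was never defined; just start the intersection from the level-$1$ sets. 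With this modification, $Z_{s_i}\subseteq Z_{t^\frown 0}$ and $Z_{s_j}\subseteq Z_{t^\frown 1}$ outright, and $Z_{t^\frown 0}\cap Z_{t^\frown 1}$ is finite by the lemma, so $Z_{s_i}\cap Z_{s_j}$ is finite. The Miller case is identical, using Clause (2) and the names $\dot A,\dot\ell$ in place of $\tau``\check\omega,\dot\ell$, noting that $|[\dot\ell(n),\dot\ell(n+1))\cap A|\leq 1$ is likewise inherited by stronger conditions.

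\textbf{Main obstacle.} The only subtle point is the almost-disjointness bookkeeping: a naive enumeration of $2^k$ and a naive choice of the $Z_i$ gives pairwise \emph{infinite} intersections unless one is careful to nest the witnessing sets down each branch. Once one records that $Z_{s^\frown i}$ should be intersected with its predecessor $Z_s$ before proceeding (which costs nothing, since the forcing statement $\tau``\check\omega\s\check Z$ only gets easier to satisfy as $Z$ grows and we are shrinking, so one must double-check it survives — and it does, precisely because $p_{s^\frown i}$ already forces $\tau``\check\omega$ into \emph{both} $Z_{s^\frown i}$ and $Z_s$), the rest is a routine induction. No new forcing-theoretic input beyond Lemma~\ref{lemma: Laverlike} is needed.
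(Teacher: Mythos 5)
Your proof is correct and follows essentially the same route as the paper: repeatedly apply Lemma~\ref{lemma: Laverlike} below each condition (legitimate since the forced hypothesis passes to stronger conditions) and replace each new witnessing set by its intersection with the parent set, which is still forced to contain $\tau``\check{\omega}$ and yields nesting along branches, hence pairwise finite intersections. The paper phrases this as an induction on $k$ using the base case $k=1$ rather than an explicit binary tree of length-$k$ sequences, but the content is identical.
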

\begin{proof}
   The case $k=1$ is exactly our departing hypothesis. Suppose our claim were valid for $k$ and let us prove it for $k+1$. For each $i<2^k$ apply the base case $k=1$ and find $q^0_{i}$ and $q^1_{i}$  and sets $Z_{i}^0$ and  $Z_{i}^1$. Without losing any generality we may assume that $Z^0_i, Z^1_i\s Z_i$ -- otherwise take their intersections with $Z_i$ (note that this is alright in that $q^x_i\leq q_i$ and  $q_i^x\forces_{\mathbb{P}_{\omega_2}}``\tau``\omega\s \check{Z}^x_i$''). By re-enumerating all of these objects  appropriately we obtain conditions $\{q^*_i: i<2^{k+1}\}$ and sets $\{Z^*_i: i<2^{k+1}\}$ with the desired properties.
\end{proof}

The time is ripe to prove  the section's main result, Theorem~\ref{thm: no Q points} (see p.\pageref{thm: no Q points}). 
\begin{proof}[Proof of Theorem~\ref{thm: no Q points}]
Let $\mathbb{P}_{\omega_2}$ be Laver/Miller's iteration and $G\s \mathbb{P}_{\omega_2}$ generic over $V$.
    Let $\mathcal{I}$ be a $\sigma$-bounded-cc ideal in $V[G]$ and let us show that it cannot be rapid. By Lemma~\ref{lemma: catch our tail} there is a stage $\alpha<\omega_2$ such that $\mathcal{I}\cap\mathcal{P}(\omega)^{V[G_\alpha]}$ is a $\sigma$-bounded-cc ideal in $V[G_\alpha].$ Working inside $V[G_\alpha]$, we will apply Lemma~\ref{lemma: miller} to the $V[G_\alpha]$-version of the iteration $\mathbb{P}_{\omega_2}$ (which  is forcing equivalent to the quotient forcing  $\mathbb{P}_{\omega_2}/G_\alpha:=\{p\in\mathbb{P}_{\omega_2}: p\restriction\alpha\in G_\alpha\}$). 
    Our claim is that $\mathcal{I}_\alpha:=\mathcal{I}\cap\mathcal{P}(\omega)^{V[G_\alpha]}$ does not have any extension (by way of $\mathbb{P}_{\omega_2}/G_\alpha$) to a rapid ideal. Moreover, we claim that this is witnessed by $\ell_\alpha$, the first real introduced (over $V[G_\alpha]$) at stage $\alpha$ (see Lemma~\ref{lemma: equivalence rapid}). Suppose towards a contradiction that $g\colon \omega\rightarrow\omega$ is  an  increasing function in $V[G]=V[G_\alpha][G]$, that dominates $\ell_\alpha$ and that $\{g(n): n<\omega\}$ is $\mathcal{I}$-positive. Let $\{\mathcal{I}^+_n: n<\omega\}$ witness for $\mathcal{I}$ being $\sigma$-bounded-cc and  $k<\omega$ with $g``\omega\in\mathcal{I}^+_k$.

    \smallskip

    Let $\dot{\ell}_\alpha$, $\dot{g}$ and $\dot{\mathcal{I}}^+_k$ be $\mathbb{P}_{\omega_2}/G_\alpha$-names and a condition $p\in G$ such that
    $$(\star)\;\;V[G_\alpha]\models p\forces_{\mathbb{P}_{\omega_2}/G_\alpha}``\dot{\ell}_\alpha<\dot{g}\,\wedge\,\dot{g}\,\text{is increasing $\wedge$ $\dot{g}``\check{\omega}\in\dot{\mathcal{I}}^+_k$''.}$$

 Working inside $V[G_\alpha]$ define 
       $$D:=\{q\leq p: \exists Z\;(Z\notin (\mathcal{I}^+_{k}\cap \mathcal{P}(\omega))\,\wedge\, q\forces_{\mathbb{P}_{\omega_2}/G_\alpha}\dot{g}``\check{\omega}\s \check{Z})\}.$$
       For this to be definable in $V[G_\alpha]$ we crucially use $\mathcal{I}^+_k\cap \mathcal{P}(\omega)^{V[G_\alpha]}\in V[G_\alpha]$. 
    \begin{claim}
   $D$ is dense below $p$.
    \end{claim}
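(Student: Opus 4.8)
The plan is to show that $D$ is dense below $p$ by a direct genericity argument inside $V[G_\alpha]$, invoking Lemma~\ref{lemma: miller} applied to the iteration $\mathbb{P}_{\omega_2}/G_\alpha$ over $V[G_\alpha]$. Fix an arbitrary condition $q\leq p$; we must find $q'\leq q$ with $q'\in D$. Since $q\leq p$, by $(\star)$ the condition $q$ still forces that $\dot{g}$ is increasing, $\dot{\ell}_\alpha<\dot{g}$, and $\dot{g}``\check{\omega}\in\dot{\mathcal{I}}^+_k$. Thus $q$, $\tau:=\dot{g}$ and $\dot\ell:=\dot\ell_\alpha$ are in exactly the position required to apply Lemma~\ref{lemma: miller} (with $\dot\ell_\alpha$ being the first real added over $V[G_\alpha]$). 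So for each $k\geq 1$ that lemma yields conditions $\{q_i:i<2^k\}$ below $q$ and sets $\{Z_i:i<2^k\}$ with $Z_i\cap Z_j$ finite for $i\neq j$ and $q_i\forces \dot{g}``\check\omega\s\check Z_i$.

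The key point is now a counting argument: the sets $\{Z_i:i<2^k\}$ are \emph{almost disjoint}, so at most $k$ of them can be $\mathcal{I}^+_k$-positive. Indeed, if $k+1$ of them, say $Z_{i_0},\dots,Z_{i_k}$, were all in $\mathcal{I}^+_k\cap\mathcal P(\omega)$, then --- because $\mathcal{I}^+_k$ is $(k+1)$-cc and any two of the $Z_{i_j}$ have finite (hence $\mathcal{I}$-null) intersection, so that $Z_{i_j}\setminus Z_{i_{j'}}$ is still in $\mathcal{I}^+_k$ (using that $\mathcal{I}^+_k$ is upward closed and that removing a null set from a positive set keeps it positive) --- one could refine them to $k+1$ pairwise disjoint $\mathcal{I}^+_k$-positive sets, contradicting $(k+1)$-ccness. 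Choosing $k$ large enough that $2^k>k$ (any $k\geq 1$ works since $2^k>k$), there is some $i$ with $Z_i\notin\mathcal{I}^+_k\cap\mathcal P(\omega)$. Then $q':=q_i\leq q\leq p$ and $Z:=Z_i$ witness $q'\in D$. Since $q$ was arbitrary, $D$ is dense below $p$.

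I would be slightly careful about one bookkeeping issue: Lemma~\ref{lemma: miller} as stated is phrased for the iteration $\mathbb{P}_{\omega_2}$ over its own ground model, and here we are applying it to $\mathbb{P}_{\omega_2}/G_\alpha$ over $V[G_\alpha]$; this is legitimate because the $V[G_\alpha]$-version of the tail of the iteration is forcing equivalent to $\mathbb{P}_{\omega_2}/G_\alpha$ (as already noted in the proof of Theorem~\ref{thm: no Q points}), and $\dot\ell_\alpha$ is precisely the first real this tail forcing introduces. The other point requiring a line of justification is the closure property "$A\in\mathcal{I}^+_k$ and $B$ null $\Rightarrow A\setminus B\in\mathcal{I}^+_k$": this follows from the remark in \S\ref{sec:prelimminaries} that we may assume each $\mathcal{I}^+_n$ (equivalently $\mathcal{B}_n$) is upward closed, together with $A\setminus B\in\mathcal{I}^+$ whenever $A\in\mathcal{I}^+$ and $B\in\mathcal{I}$, and $A\setminus B\subseteq A$.

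The main obstacle, and the only place real work happens, is the almost-disjointness-versus-$(k+1)$-cc counting argument together with getting the null-set bookkeeping exactly right; everything else is routine unwinding of the forcing definitions. Once $D$ is dense below $p$ and $p\in G$, genericity gives $q^*\in D\cap G$, hence a set $Z\notin\mathcal{I}^+_k\cap\mathcal P(\omega)^{V[G_\alpha]}$ with $q^*\forces \dot g``\check\omega\s\check Z$, so $g``\omega\subseteq Z$; but $Z\notin\mathcal{I}^+_k$ means $Z\in\mathcal{I}$ (as $Z$ lies in the relevant model and $\mathcal{I}$ restricted there is $\sigma$-bounded-cc with pieces $\mathcal{I}^+_n$), and then $g``\omega\subseteq Z$ forces $g``\omega\in\mathcal{I}$, contradicting $g``\omega\in\mathcal{I}^+_k\subseteq\mathcal{I}^+$. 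This contradiction completes the proof that $\mathcal{I}$ is not rapid, and the "in particular" clause about measure ideals follows since every $\sigma$-bounded-cc $Q^+(\omega)$-ideal is rapid and every measure ideal is $\sigma$-bounded-cc.
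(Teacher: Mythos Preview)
Your argument is essentially the paper's: apply Lemma~\ref{lemma: miller} below an arbitrary $q\leq p$ to get $2^k$ almost-disjoint sets $Z_i$ with $q_i\forces\dot g``\check\omega\subseteq\check Z_i$, then use the $(k+1)$-cc of $\mathcal{I}^+_k\cap\mathcal{P}(\omega)^{V[G_\alpha]}$ to conclude some $Z_i\notin\mathcal{I}^+_k$. Two small slips are worth fixing. First, $(k+1)$-cc allows up to $k{+}1$ pairwise disjoint elements, not $k$, so the correct inequality is $2^k>k{+}1$; this holds for $k\geq 2$, and for $k\leq 1$ one simply invokes Lemma~\ref{lemma: miller} with a larger exponent (the paper glosses over this too). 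Second, your justification that ``$A\in\mathcal{I}^+_k$ and $B\in\mathcal{I}$ imply $A\setminus B\in\mathcal{I}^+_k$'' via \emph{upward} closure is backwards: $A\setminus B\subseteq A$, so upward closure goes the wrong way. The clean reason is that the $(k+1)$-cc is really a condition in the quotient $\mathcal{P}(\omega)/\mathcal{I}$, where almost-disjoint sets are already disjoint, so no literal disjointification is needed; equivalently, $\mathcal{I}^+_k$ should be read as a union of $\mathcal{I}$-classes, whence $[A]=[A\setminus B]$ for $B\in\mathcal{I}$. Finally, in your concluding paragraph (which goes beyond the Claim itself), the step ``$Z\notin\mathcal{I}^+_k$ means $Z\in\mathcal{I}$'' is false: $Z$ could lie in some other $\mathcal{I}^+_{k'}$. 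The paper's contradiction instead uses upward closure of $\mathcal{I}^+_k$ in the correct direction: since $g``\omega\subseteq Z$ and $g``\omega\in\mathcal{I}^+_k$, one would get $Z\in\mathcal{I}^+_k$, contradicting the choice of $Z$.
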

    \begin{proof}[Proof of claim]
        Let $q\leq p$ be a condition in $\mathbb{P}_{\omega_2}/G_\alpha$. Apply Lemma~\ref{lemma: miller} inside $V[G_\alpha]$ to $q$, the names $\dot{\ell}_\alpha,\dot{g}$ and the integer $k$. This way we obtain conditions $\{q_i: i<2^k\}$ and sets $\{Z_i: i<2^k\}$ such that $q_i\leq q$, $Z_i\cap Z_j$ is finite and $q_i\forces_{\mathbb{P}_{\omega_2}/G_\alpha}\dot{g}``\check{\omega}\s \check{Z}_i$ for all $i,j<2^k.$ Since the intersections  $Z_i\cap Z_j$ are $\mathcal{I}_\alpha$-null and $\mathcal{I}^+_{k}\cap \mathcal{P}(\omega)^{V[G_\alpha]}$ is $(k+1)$-cc there must be  $i^*<2^k$ such that $Z_{i^*}\notin \mathcal{I}^+_{k}\cap \mathcal{P}(\omega)^{V[G_\alpha]}$. All in all, $q_{i^*}\leq q$ and $q_{i^*}\in D$.
    \end{proof}
    By density and since $p\in G$ there is $q\in D\cap G$. This yields a contradiction: First, $q$ forces $(\star)$, hence $g``\omega\in\mathcal{I}^+_k$; second,  $q\in D$ so by definition there is $Z$ not in $\mathcal{I}^+_k\cap \mathcal{P}(\omega)$ such that $g``\omega\s Z$. Note that this is impossible because $\mathcal{I}^+_k$ is upwards $\s$-closed.
\end{proof}

As an immediate consequence of the above results we obtain the following quotable corollary:

\begin{cor}\label{cor: final}
 In Laver's model \cite{Laver} or in the Mathias model  \cite[pp. 407--415]{Halbeisen}  there are no rapid $\sigma$-bounded-cc ideals.  In particular, in those models and in Miller's model \cite{Miller2} there are no measure  $Q^{+}(\N)$-ideals. 
\end{cor}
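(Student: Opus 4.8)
The plan is to read off Corollary~\ref{cor: final} from Theorem~\ref{thm: no Q points} together with two elementary observations that repackage a ``measure $Q^{+}(\N)$-ideal'' as an object of exactly the kind Theorem~\ref{thm: no Q points} destroys. First I would note that every measure ideal $\mathcal{I}=\ker(\mu)$ is automatically $\sigma$-bounded-cc: setting $\mathcal{I}^+_n:=\{A\mid \mu(A)\ge \frac{1}{n+1}\}$ we have $\mathcal{I}^+=\bigcup_{n<\omega}\mathcal{I}^+_n$, and since $\mu(\omega)\le 1$ at most $n+1$ pairwise disjoint sets can belong to $\mathcal{I}^+_n$, so $\mathcal{I}^+_n$ is $(n+1)$-cc (this is precisely the computation recalled in \S\ref{sec: no Q measures} right before the definition of $\sigma$-bounded-cc ideals). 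Second, every $Q^{+}(\N)$-ideal is rapid: given a partition $\{F_n\mid n<\omega\}$ of $\omega$ into finite sets and a selector $X\in\mathcal{I}^+$ with $|X\cap F_n|\le 1$ for all $n$, the set $X\setminus F_0$ is still $\mathcal{I}$-positive (as $\mathcal{I}$ contains every finite set) and satisfies $|(X\setminus F_0)\cap F_n|\le n$ for all $n<\omega$; alternatively one quotes Proposition~\ref{lemma: equivalence rapid}.

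With these in hand the Laver and Mathias cases are immediate. Applying Theorem~\ref{thm: no Q points} to the Laver iteration \cite{Laver} produces precisely Laver's model, in which no $\sigma$-bounded-cc ideal is rapid, and the same proof --- using \cite[Lemma~25.9]{Halbeisen} (the Remark following Lemma~\ref{lemma: Laverlike}) in place of clause~(1) of Lemma~\ref{lemma: Laverlike} --- handles the Mathias iteration, hence the Mathias model. Since a measure $Q^{+}(\N)$-ideal in either model would, by the two observations above, be a rapid $\sigma$-bounded-cc ideal, no such ideal exists there; and therefore (by definition of a $Q^{+}(\N)$-measure) neither model carries a $Q^{+}(\N)$-measure.

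For Miller's model I would instead rerun the proof of Theorem~\ref{thm: no Q points} with the \emph{selector} combinatorics of clause~(2) of Lemma~\ref{lemma: Laverlike} replacing the \emph{domination} combinatorics of clause~(1), together with the corresponding selector-version of Lemma~\ref{lemma: miller}, which is obtained verbatim from the $\tau$-version by iterating the base case $k=1$ (cf.~\cite[Claim~5.1.1]{Miller2}). Concretely: were $\mathcal{I}$ a measure $Q^{+}(\N)$-ideal in Miller's model, then, $\mathcal{I}$ being $\sigma$-bounded-cc, Lemma~\ref{lemma: catch our tail} reflects it to a $\sigma$-bounded-cc ideal $\mathcal{I}_\alpha=\mathcal{I}\cap\mathcal{P}(\omega)^{V[G_\alpha]}$ in an intermediate model $V[G_\alpha]$ with $\cf(\alpha)=\omega_1$; the first Miller real $\ell_\alpha$ added at stage $\alpha$ yields the interval partition $\{[\ell_\alpha(n),\ell_\alpha(n+1))\mid n<\omega\}$, any $Q^{+}$-selector $A\in\mathcal{I}^+$ of which lies in some level $\mathcal{I}^+_k$; applying the selector-version of Lemma~\ref{lemma: miller} to the names for $\ell_\alpha$, $A$ and to $k$ produces $2^k$ conditions and pairwise almost-disjoint sets $\{Z_i\mid i<2^k\}$ forcing $\dot A\s\check Z_i$, and since $\mathcal{I}^+_k\cap\mathcal{P}(\omega)^{V[G_\alpha]}$ is $(k+1)$-cc one of the $Z_i$ must be $\mathcal{I}^+_k$-positive below some condition of $G$, contradicting the upward $\s$-closure of $\mathcal{I}^+_k$ exactly as at the end of the proof of Theorem~\ref{thm: no Q points}.

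The only non-routine point is this Miller step: one must check that the tree-fusion argument behind Lemma~\ref{lemma: Laverlike}(2) supports the same ``iterate into $2^k$ almost-disjoint pieces'' passage as the Laver case, i.e.\ that the selector-version of Lemma~\ref{lemma: miller} really goes through. Everything else is bookkeeping that reuses Lemma~\ref{lemma: catch our tail} and the density argument already written out for the rapid case, so I expect that the hardest part of a complete proof is merely spelling out that Miller analogue, which the text attributes to \cite[Claim~5.1.1]{Miller2}.
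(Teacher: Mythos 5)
Your proposal is correct and follows exactly the route the paper intends: the paper leaves Corollary~\ref{cor: final} as an immediate consequence of Theorem~\ref{thm: no Q points}, Lemma~\ref{lemma: Laverlike} and the surrounding remarks, and your two reductions (measure ideals are $\sigma$-bounded-cc; $Q^{+}(\N)$-ideals are rapid) plus the Mathias remark and the Miller-model rerun via clause~(2) of Lemma~\ref{lemma: Laverlike} and the selector analogue of Lemma~\ref{lemma: miller} are precisely the intended derivation. The only part the paper itself does not spell out is the same part you flag as non-routine, namely the selector version of Lemma~\ref{lemma: miller}, which the text attributes to the same iteration-of-the-base-case argument and to \cite[Claim 5.1.1]{Miller2}.
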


\section{A Banach space with $L$-orthogonal sequences but without $L$-orthogonal elements}\label{sec:BanachSpaceLorthogonal}


\smallskip

\subsection{Strong $Q$-measures and the second dual}\label{strongstarstar}

Recall that a family $\mathcal{H}\subseteq \mathcal{P}(\N)$ is called \emph{$\omega$-hitting} if given any sequence $(A_n)_{n\in \N}$ of infinite subsets of $\N$ there is $A\in \mathcal{H}$ such that for any $n\in \N$, $A_n\cap A$ is infinite. We generalize\footnote{\cite[Theorem 3.3]{Hrusak} claims that for any Borel ideal $\mathcal{I}$ either $\non^*(\mathcal I)= \omega$ or $\mathcal{ED}_{fin}\leq_{KB}\mathcal I$. Now, $\non^*(\mathcal I)\leq\omega$ if, and, only if $\mathcal I$ is not $w$-hitting. Also, $\mathcal{ED}_{fin}\leq_{KB}\mathcal I$ if, and only if, there is a partition of $\omega$ into finite sets $\mathcal P$, such that $\mathcal I$ contains all the selectors of $\mathcal{P}$. So, the following theorem generalizes the cited result.} \cite[Theorem 3.3]{Hrusak} as: 

\begin{theorem}[Hru\v{s}\'{a}k-Meza-Minami]
Let $\mathcal{H}$ be an analytic hereditary family. Then the following are equivalent:
\begin{enumerate}
    \item For any partition of $\N$ into finite sets $\mathcal{P}$, $\mathcal{H}$ does not contain all the selectors of $\mathcal{P}$.
    \item $\mathcal{H}$ is not $\omega$-hitting.\qed
\end{enumerate}
\end{theorem}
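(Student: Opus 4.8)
The plan is to establish the equivalence by proving the two implications in contrapositive form; the implication $\neg(1)\Rightarrow\neg(2)$ is entirely elementary, while $(1)\Rightarrow(2)$ (equivalently $\neg(2)\Rightarrow\neg(1)$) carries all the weight and is the place where both analyticity and hereditariness are used.

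For $\neg(1)\Rightarrow\neg(2)$: suppose there is a partition $\mathcal{P}=\{P_n:n\in\omega\}$ of $\omega$ into finite sets all of whose selectors lie in $\mathcal{H}$. Given any sequence $(A_m)_{m\in\omega}$ of infinite subsets of $\omega$, each $A_m$ meets infinitely many blocks $P_n$ (the blocks being finite and covering $\omega$). Fix a bijection $e\colon\omega\to\omega\times\omega$ and build $x_0,x_1,\dots$ together with finite index sets $E_0\subseteq E_1\subseteq\cdots$ recursively: at step $k$, writing $e(k)=(m_k,j_k)$, choose $n_k\notin E_k$ with $P_{n_k}\cap A_{m_k}\neq\varnothing$, pick $x_k\in P_{n_k}\cap A_{m_k}$, and put $E_{k+1}=E_k\cup\{n_k\}$. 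Then $S:=\{x_k:k\in\omega\}$ is a selector of $\mathcal{P}$ and $S\cap A_m$ is infinite for every $m$; as $S\in\mathcal{H}$, this witnesses that $\mathcal{H}$ is $\omega$-hitting. (Only the fact that these selectors belong to $\mathcal{H}$ is used; neither hereditariness nor analyticity is needed here.)

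For $\neg(2)\Rightarrow\neg(1)$, assume $\mathcal{H}$ is $\omega$-hitting, analytic and hereditary; we must produce a partition of $\omega$ into finite sets all of whose selectors lie in $\mathcal{H}$. By the reduction in \S\ref{sec:prelimminaries} we may restrict attention to partitions into finite intervals, and by hereditariness it suffices to arrange that every \emph{full} selector (exactly one point per block) lies in $\mathcal{H}$, since any partial selector extends to a full one. Consider the game $G$ in which player I plays an increasing sequence $0=m_{-1}<m_0<m_1<\cdots$, declaring the blocks $I_n:=[m_{n-1},m_n)$, and after seeing $I_n$ player II plays a point $x_n\in I_n$; player I wins the run iff $\{x_n:n\in\omega\}\in\mathcal{H}$. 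Fixing a closed set $C\subseteq\mathcal{P}(\omega)\times\omega^\omega$ with $\mathcal{H}=\{A:\exists f\,(A,f)\in C\}$, one unfolds $G$ to a game $G^{*}$ in which player I additionally builds $f\in\omega^\omega$ coordinatewise and wins iff $(\{x_n\},f)\in C$; the payoff of $G^{*}$ for I is closed, hence $G^{*}$ is determined by the Gale--Stewart theorem (so for Borel $\mathcal{H}$ one may just invoke Borel determinacy directly, and the analytic case is handled by this unfolding). A winning strategy for I in $G^{*}$ projects to one in $G$. If I has a winning strategy $\sigma$ in $G$, we extract the desired partition by a fusion: build blocks $J_0,J_1,\dots$ recursively so that $J_{k+1}$ is long enough that, for every choice $y_0\in J_0,\dots,y_k\in J_k$, some $\sigma$-consistent run of $G$ in which II eventually realizes exactly $\{y_0,\dots,y_k\}$ (padding with dummy moves before the relevant block appears, which is possible since $\sigma$'s right endpoints tend to infinity) stays inside $J_0\cup\cdots\cup J_{k+1}$; then every full selector of $\{J_k\}$ is the output of a $\sigma$-consistent run, hence lies in $\mathcal{H}$, giving $\neg(1)$. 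It remains to rule out a winning strategy $\tau$ for II, and this is exactly where $\omega$-hitting enters: against $\tau$, player I reacts to II's choices so as to steer the produced selector into $\mathcal{H}$, using an $\omega$-hitting witness $A\in\mathcal{H}$ for a sequence $(B_m)$ of infinite sets read off along the play, and thinning $A$ by hereditariness to a selector realizable against $\tau$ — contradicting that $\tau$ is winning.

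I expect the main obstacles to be, first, the bookkeeping in the fusion step — making precise the recursion on the $J_k$'s so that an arbitrary full selector of $\{J_k\}$ genuinely arises from a legal run consistent with $\sigma$, the subtle point being that $\sigma$'s own blocks need not align with the $J_k$'s; second, the construction, from a hypothetical II-strategy $\tau$, of the sequence $(B_m)$ and the reactive play by I that realizes the $\omega$-hitting witness as a selector, which is the crux on which the use of the hypothesis rests; and third, ensuring that the passage from the determinacy of the unfolded closed game $G^{*}$ to the conclusion about the original analytic game $G$ is carried out correctly, this being where analyticity (rather than mere Borelness of $\mathcal{H}$) makes the argument delicate.
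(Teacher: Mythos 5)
Your first implication is fine and coincides with what the paper calls the straightforward direction. The problem is the hard direction, where your text is an outline whose two decisive steps are precisely the ones you postpone as ``expected obstacles,'' and the central one — refuting a winning strategy $\tau$ for player II by means of $\omega$-hitting — fails as described. In your game II chooses the points and I only chooses the right endpoints of the intervals, so II may always answer with the \emph{least} element of the interval just played. Against this strategy every set you could ``read off along the play'' is a singleton, so $\omega$-hitting gives no purchase, and the outcome of any run is simply the set of left endpoints of I's intervals, each such outcome containing $0$. Whether I can defeat this strategy therefore depends on $\mathcal{H}$ containing infinite sets with a prescribed finite initial part, and hereditariness (closure under \emph{removing} points) together with $\omega$-hitting does not give this: the family of all sets omitting $0$ is Borel, hereditary and $\omega$-hitting, and against it the least-element strategy wins outright. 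So the refutation of $\tau$ cannot be carried out from the stated hypotheses alone; in the ideal case this is exactly where closure under finite modifications is used, and any completion of your plan must either import such extra structure or first pass to a better-behaved subfamily. There is also a determinacy bookkeeping gap: the payoff of your game $G$ is analytic, so ZFC only yields determinacy of the unfolded closed game $G^*$; a strategy for II obtained this way lives in $G^*$ and reacts to I's witness-building moves, whereas your sketched refutation treats it as a strategy in $G$. The fusion step (misalignment of $\sigma$'s blocks with the $J_k$'s) you flag but likewise do not resolve, though that part is more routine.

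For contrast, the paper never plays a game at the analytic level. It reduces the hard implication to the Borel case: an $\omega$-hitting hereditary family is $\omega$-splitting, and by Spinas' theorem every analytic $\omega$-splitting family contains a closed $\omega$-splitting subfamily, which yields a Borel hereditary $\omega$-hitting subfamily of $\mathcal{H}$; one then quotes the original Hru\v{s}\'ak–Meza–Minami argument for Borel families, observing which structural properties it needs. If you want to keep a direct game-theoretic route, you must (i) run the argument in the unfolded game, and (ii) confront the least-element strategy head on, e.g.\ by building finite-modification freedom into the game or into the family before playing; as written, the proposal has a genuine gap at exactly the point where the hypothesis of $\omega$-hitting is supposed to do its work.
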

\begin{proof}
 The implication $(2)\implies(1)$ always hold and can be shown by a straightforward argument, so we focus on the implication $(1) \implies (2)$.  
 First appeal to the result in \cite{Spinas} that claims that any analytic $\omega$-splitting\footnote{$\mathcal{A}\subseteq [\omega]^\omega$ is $\omega$-splitting if for any countable $\mathcal X\subseteq [\omega]^\omega$ there is $A\in \mathcal{A}$ such that for any $X\in \mathcal{X}$, $\lvert A\cap X\lvert = \lvert X\setminus A \lvert = \omega$.} family contains a closed $\omega$-splitting family, then notice that any $\omega$-hitting hereditary family is $\omega$-splitting (just split the witness of being $\omega$-hitting). So any analytic $\omega$-hitting hereditary family contains a Borel hereditary $\omega$-hitting family. This reduces the problem to the Borel case.

In order to prove the result in the case $\mathcal{H}$ is Borel it is enough to reproduce the original proof in \cite[Theorem 3.3]{Hrusak}, because it only uses the hereditary property of ideals.
\end{proof}

\begin{cor}\label{epsilon measures corolary}
   Let $\mu\colon \mathcal{P}(\omega) \rightarrow [0,1]$ be a measure and $\varepsilon>0$. Then, the following assertions are equivalent:
   \begin{enumerate}
       \item $\mu$ is an $\varepsilon$-strong $Q$-measure.
       \item For every $F_\sigma$ hereditary family $\mathcal{H}$ that is $\omega$-hitting,  there is $A\in \mathcal{H}$, such that $\mu(A)\ge\varepsilon$.
       \item For every analytic hereditary family $\mathcal{H}$ that is $\omega$-hitting,  there is $A\in \mathcal{H}$, such that $\mu(A)\ge\varepsilon$.
   \end{enumerate}
\end{cor}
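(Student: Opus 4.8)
The plan is to run all three equivalences off the Hru\v{s}\'ak--Meza--Minami theorem stated just above, together with one elementary observation: for a partition of $\omega$ into finite sets, the family of its selectors is a closed, hereditary, $\omega$-hitting family. I will prove the cycle $(1)\Rightarrow(3)\Rightarrow(2)\Rightarrow(1)$. The implication $(3)\Rightarrow(2)$ is immediate, since every $F_\sigma$ subset of $2^\omega$ is analytic, so an $\omega$-hitting $F_\sigma$ hereditary family is in particular an $\omega$-hitting analytic hereditary family.

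For $(1)\Rightarrow(3)$ I would let $\mathcal{H}$ be an analytic hereditary $\omega$-hitting family and read the Hru\v{s}\'ak--Meza--Minami theorem contrapositively: since $\mathcal{H}$ \emph{is} $\omega$-hitting, clause (1) of that theorem fails, i.e.\ there is a partition $\mathcal{P}=\{P_n : n\in\omega\}$ of $\omega$ into finite sets such that $\mathcal{H}$ contains \emph{every} selector of $\mathcal{P}$. Applying the hypothesis that $\mu$ is an $\varepsilon$-strong $Q$-measure to this partition yields $A\subseteq\omega$ with $\mu(A)\ge\varepsilon$ and $|A\cap P_n|\le 1$ for every $n$, i.e.\ $A$ is a selector of $\mathcal{P}$; hence $A\in\mathcal{H}$, which is exactly what (3) asks for.

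For $(2)\Rightarrow(1)$ I would fix a partition $\mathcal{P}=\{P_n : n\in\omega\}$ of $\omega$ into finite sets and consider the family of selectors $\mathcal{S}:=\{S\subseteq\omega : |S\cap P_n|\le 1 \text{ for all } n\}$. It is hereditary, and it is closed (hence $F_\sigma$) in $2^\omega$: for each $n$ the set $\{S : |S\cap P_n|\ge 2\}$ is clopen, since it depends only on the finitely many coordinates in $P_n$, so $\mathcal{S}$ is an intersection of clopen sets. The one point that needs an argument is that $\mathcal{S}$ is $\omega$-hitting. Given infinite sets $A_0,A_1,\dots\subseteq\omega$, each $A_m$ meets infinitely many members of $\mathcal{P}$ (otherwise $A_m$ would be contained in a finite union of the $P_n$). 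Then build $S\in\mathcal{S}$ by recursion along an enumeration $\langle (m_i,j_i):i\in\omega\rangle$ of $\omega\times\omega$: at step $i$ pick a member $P_k$ of $\mathcal{P}$ not used at any earlier step with $A_{m_i}\cap P_k\ne\emptyset$, and throw one element of $A_{m_i}\cap P_k$ into $S$. Since no member of $\mathcal{P}$ is used twice, $S$ is a selector; since for each $m$ the pair $(m,j)$ occurs for infinitely many $j$, $S\cap A_m$ is infinite. Thus $\mathcal{S}$ is an $F_\sigma$ hereditary $\omega$-hitting family, so by (2) there is $S\in\mathcal{S}$ with $\mu(S)\ge\varepsilon$; being in $\mathcal{S}$, $S$ is a selector of $\mathcal{P}$, so $\mu$ is an $\varepsilon$-strong $Q$-measure.

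All of the genuine difficulty sits inside the Hru\v{s}\'ak--Meza--Minami theorem quoted above; given it, the corollary is essentially a packaging statement. The only place that requires care is the verification that the selector family of a partition into finite sets is $\omega$-hitting, and that reduces to the diagonalization just described plus the remark that an infinite set must meet infinitely many pieces of such a partition.
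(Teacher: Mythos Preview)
Your proof is correct and follows exactly the same route as the paper: the cycle $(1)\Rightarrow(3)\Rightarrow(2)\Rightarrow(1)$, using the Hru\v{s}\'ak--Meza--Minami theorem for $(1)\Rightarrow(3)$ and the observation that the selector family of a finite partition is closed, hereditary, and $\omega$-hitting for $(2)\Rightarrow(1)$. The paper merely asserts the latter fact without argument, so your explicit verification that the selector family is closed and $\omega$-hitting (via the diagonalization) is a welcome addition rather than a deviation.
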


\begin{proof}
    $(1)\Rightarrow (3)$. Let $\mathcal{H}$ be an $\omega$-hitting hereditary analytic family. It follows from the previous theorem that there exists a partition $\mathcal{P}$ of $\N$ into finite sets such that $\mathcal{H}$ contains all the selectors of $\mathcal{P}$.
    Since $\mu$ is an $\varepsilon$-strong Q-measure, there exists $A\in \mathcal{H}$ such that $\mu(A)\geq \varepsilon$.

    $(3)\Rightarrow (2)$. This is trivial.
    
    $(2)\Rightarrow (1)$. Let $\mathcal{P}$ be an arbitrary partition of $\N$ into finite sets. Notice that the set of selectors of $\mathcal{P}$ is $F_\sigma$, hereditary and $\omega$-hitting. Then, there must be a selector $A$ such that $\mu(A)\ge \varepsilon$.
\end{proof}

Let $X$ be a Banach space, $(x_n)_{n\in \omega}$ a sequence in $B_X$, $(\varepsilon_n)_{n\in \omega}$ a sequence of positive real numbers converging to zero, $Z\subseteq X$ a separable subspace of $X$, and $(F_n)_{n\in \omega}$ an increasing sequence of finite dimensional subspaces of $X$ such that $Z=\overline{\bigcup_{n\in \omega}F_n}$. For any $B\subseteq \omega$ and $n\in \omega$, denote:
\begin{itemize}
    \item $B(n)$ the {$n$-th element} of $B$;
    \item $\overrightarrow{B(n)}=\{A\subseteq B: \min A\ge B(n)\}$;
    \item $C[B]:=\{w\in X: w\in \overline{\con}\{x_m:m\in B\}\}$.
\end{itemize}
Define the set $\mathcal{L}_{(F_n)_{n\in \omega}}$ as
    $$\{B\subseteq \omega: \forall n \in \omega \,\forall  A\in \overrightarrow{B(n)} \,\forall w\in C[A] \, \forall y\in F_n~~ (1-\varepsilon_n) (1+\norm{y})\le \norm{y+w} \}.$$


\begin{theorem} \label{Theorem strong Q-points}
Let $X$ be a Banach space, $(x_n)_{n\in \omega}$ an L-orthogonal sequence, and $\mu\colon \mathcal{P}(\omega) \rightarrow [0,1]$ a strong $Q$-measure with $\mu(\omega)=1$. Then  $\mu\text{-}\lim x_n$ is an L-orthogonal element.\footnote{The notation $\mu\text{-}\lim x_n$, related to the operator $T$, was presented in Notation \ref{notation:specialnotation}.}
\end{theorem}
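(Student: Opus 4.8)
The plan is to show that for every $x \in X$ we have $\|x + \mu\text{-}\lim x_n\| = 1 + \|x\|$, where $\mu\text{-}\lim x_n = T^{**}(\mu) \in X^{**}$. Since $(x_n)_{n\in\omega}$ is $L$-orthogonal and $\|\mu\| = \mu(\omega) = 1$, the inequality $\|x + T^{**}(\mu)\| \le 1 + \|x\|$ is automatic from $\|T^{**}\| \le 1$ and the triangle inequality; so the entire content is the lower bound $\|x + T^{**}(\mu)\| \ge 1 + \|x\|$. I would first reduce to a separable situation: fix $x \in X$ and $\varepsilon > 0$, pick a norming functional, and let $Z$ be a separable subspace containing $x$ together with enough of $X$ to compute the relevant norms; write $Z = \overline{\bigcup_n F_n}$ for an increasing chain of finite-dimensional subspaces $F_n$ with $x \in F_0$. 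Choose $(\varepsilon_n)_{n\in\omega}$ decreasing to zero. The goal is then to locate, for a suitable selector-type set $A$ of positive (indeed full) $\mu$-measure, a point $w \in C[A]$ for which $\|x + w\|$ is close to $1 + \|x\|$ and such that $w$ is a good approximation to the relevant part of $T^{**}(\mu)$.

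The key device is the family $\mathcal{L}_{(F_n)_{n\in\omega}}$ defined just before the theorem: I claim it is $F_\sigma$, hereditary, and $\omega$-hitting. Hereditariness is clear from the definition (shrinking $B$ only shrinks the quantifier ranges $\overrightarrow{B(n)}$ and $C[A]$, and makes the $n$-th element larger, which only helps). That it is $F_\sigma$ follows because the defining condition is a countable conjunction over $n$, $A$ ranging over finite subsets, $w$ over a countable dense subset of each $C[A]$, and $y$ over a countable dense subset of $F_n$, of closed conditions. The crucial point is that $\mathcal{L}_{(F_n)_{n\in\omega}}$ is $\omega$-hitting: given infinite sets $(B_k)_{k\in\omega}$, I must build $A$ meeting each $B_k$ infinitely often while lying in $\mathcal{L}$. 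Here one uses that $(x_n)_{n\in\omega}$ is $L$-orthogonal, so $\lim_n \|y + x_n\| = 1 + \|y\|$ for all $y$ in the separable space $Z$; by a diagonal argument over the countably many $y$'s in a dense subset of $\bigcup_n F_n$ one can thin out each $B_k$ so that, far enough out, every element $x_m$ (and hence, by convexity of the norm, every $w \in C[A]$ for $A$ a tail) satisfies $\|y + w\| \ge (1-\varepsilon_n)(1+\|y\|)$ for all $y \in F_n$. Interleaving these thinned sets produces the required $A \in \mathcal{L}$ hitting every $B_k$ infinitely.

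Now apply Corollary~\ref{epsilon measures corolary} with $\varepsilon = 1$ (valid since $\mu$ is a strong $Q$-measure and $\mu(\omega)=1$, hence a $1$-strong $Q$-measure): there is $A \in \mathcal{L}_{(F_n)_{n\in\omega}}$ with $\mu(A) \ge 1 = \mu(\omega)$, so $\mu(\omega \setminus A) = 0$. The point of full measure on $A$ is that $T^{**}(\mu) = T^{**}(\mu\restriction A)$, i.e. the $\mu$-limit is concentrated on $A$; one should verify this from Notation~\ref{notation:specialnotation}, using that $\mu$ vanishes on the complement and $(x_n)$ is bounded. Then $T^{**}(\mu)$ lies in the weak$^*$-closure of $\mathrm{conv}\{x_m : m \in A, m \ge A(n)\}$ for every $n$ — again because the tails carry full measure — so for any $n$ and any functional witnessing $\|x + T^{**}(\mu)\|$ up to $\varepsilon_n$, one can find $w \in C[A']$ with $A' \in \overrightarrow{A(n)}$ realizing nearly the same value. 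Since $x \in F_0 \subseteq F_n$ and $A \in \mathcal{L}$, the defining inequality gives $\|x + w\| \ge (1-\varepsilon_n)(1+\|x\|)$, and hence $\|x + T^{**}(\mu)\| \ge (1-\varepsilon_n)(1+\|x\|) - \varepsilon_n$. Letting $n \to \infty$ yields $\|x + T^{**}(\mu)\| \ge 1 + \|x\|$, completing the proof.

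I expect the main obstacle to be the $\omega$-hitting verification for $\mathcal{L}_{(F_n)_{n\in\omega}}$ — getting the diagonalization to simultaneously control all $y$ in all the $F_n$ while still interleaving infinitely many $B_k$ — together with the bookkeeping that shows $T^{**}(\mu)$ genuinely sits in $\bigcap_n C[\overrightarrow{A(n)}]$-type weak$^*$-closures when $\mu(\omega\setminus A)=0$. Both are essentially the arguments from \cite{AvilesMartinezRueda} and \cite{HrusakSaenz} transcribed from ultrafilter limits to strong $Q$-measures, but the convexity step (passing from the $x_m$ to arbitrary $w \in C[A]$) needs the observation that $y \mapsto \|y + w\|$ behaves well under convex combinations, which is where the $(1-\varepsilon_n)$ slack is consumed.
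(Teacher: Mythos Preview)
Your approach is essentially identical to the paper's: both use the family $\mathcal{L}_{(F_n)_{n\in\omega}}$, establish it is $F_\sigma$, hereditary and $\omega$-hitting, apply Corollary~\ref{epsilon measures corolary} to extract $B\in\mathcal{L}$ with $\mu(B)=1$, and then pass to weak$^*$-closures of convex hulls of tails. The paper simply outsources the two nontrivial steps you sketch, citing \cite[Theorem~5.4]{HrusakSaenz} for $\omega$-hitting and \cite[Lemma~3.3]{AvilesMartinezRueda} for the conclusion that any element of $\bigcap_n\overline{\mathrm{conv}}^{w^*}\{x_j:j\in B,\,j>n\}$ is $L$-orthogonal.

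One genuine gap in your sketch: the phrase ``and hence, by convexity of the norm, every $w\in C[A]$'' is wrong as written. Convexity of the norm yields $\|y+w\|\le\sum\lambda_i\|y+x_{m_i}\|$, which is the opposite inequality to what you need. The correct mechanism is an \emph{inductive} argument that peels off one $x_{m_i}$ at a time, and for this it is essential that the earlier sequence terms sit inside the $F_n$'s; this is exactly why the paper takes $F_n=\mathrm{span}\{z,x_0,\dots,x_n\}$ rather than an arbitrary separable exhaustion containing $z$. With that choice, if $w=\lambda_1 x_{m_1}+\cdots+\lambda_k x_{m_k}$ with $m_1<\cdots<m_k$ large, one absorbs $\lambda_1 x_{m_1}$ into the ``$y$'' slot (since $x_{m_1}\in F_{m_1}$) and recurses; the multiplicative losses $(1-\varepsilon_{m_i})$ then compound, and one arranges the $\varepsilon_n$'s to be summable. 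Your vague ``enough of $X$ to compute the relevant norms'' does not guarantee this structure, so as stated your $\omega$-hitting argument would not close. You correctly flag this step as the main obstacle, but the fix is not ``consuming slack via convexity''; it is the inductive peeling enabled by the specific choice of $F_n$.
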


\begin{proof}
Consider $z\in X$, $F_n = span\{z,x_0,\ldots,x_n\}$, and $(\varepsilon_n)_{n\in \omega}$ a sequence of positive reals converging to zero. It 
is clear that $\mathcal{L}_{(F_n)_{n\in \omega}}$ is $F_\sigma$ (indeed closed) and hereditary. As a consequence of \cite[Theorem 5.4]{HrusakSaenz}, it is also $\omega$-hitting. By Corollary \ref{epsilon measures corolary} above there is $B\in\mathcal{L}_{(F_n)_{n\in \omega}}$ such that $\mu(B)=1$. Since $\mu\colon \mathcal{P}(\omega)\rightarrow [0,1]$ vanishes on finite sets, it satisfies $\mu(\omega)=\mu(B)=\mu(\{j \in B: j>n\})=1$ for every $n$. It follows that $$\mu\in \bigcap_n \overline{\mathrm{conv}}^{w^{*}}(\{e_j:j\in B,~j>n\})\subseteq \ell_1^{**} = \ell_\infty^{*},$$ and hence
$$T^{**}(\mu) = \mu\text{-}\lim x_n\in \bigcap_n \overline{\mathrm{conv}}^{w^{*}}(\{x_j:j\in B,~j>n\})\subset X^{**},$$ 
where $T\colon \ell_1 \rightarrow X$ is the unique operator satisfying $Te_n=x_n$ for every $n$.
It is  clear that $(x_n)_{n\in B}$ is $L$-orthogonal, so by \cite[Lemma 3.3]{AvilesMartinezRueda} $$\norm{T^{**}(\mu)+z}=\norm{T^{**}(\mu)}+\norm{z}=1+\norm{z}.$$
This completes the verification.
\end{proof}

\subsection{A space for each fit $Q$-measure}\label{sec: A space for each $Q$-measure}

In this section we construct, for each measure $\mu$ that is not a fit $Q$-measure, a space $X$ and an $L$-orthogonal sequence $(x_n)$ such that $\mu\text{-}\lim x_n$ is not an $L$-orthogonal element.

\begin{definition}
    Let $\mathcal P=\{P_n: n<\omega\}$ be a partition of $\omega$ into finite sets. 

    \begin{enumerate}
        \item  The \emph{ideal associated to the partition $\mathcal{P}$} is
    $$\mathcal{I}_{\mathcal P}=\{B\subseteq \omega: \exists k< \omega\; \forall n<\omega\; \lvert P_n\cap B\lvert\leq k \}=\{B\subseteq \omega: \sup_n \lvert P_n\cap B\lvert < \infty \}.$$
    \item The \emph{space associated to $\mathcal{P}$} is 
    $$K_{\mathcal P}=\{x\in \{-1,1\}^{\omega}: |\{ n\in P_m : x(n)=-1\}|\leq 1 \mbox{ for every }m<\omega\}.$$
    \end{enumerate}
\end{definition}

\
    The space $K_{\mathcal{P}}$ is a compact space  homeomorphic to the Cantor set, cf. \cite[Proof of Proposition 7.2]{AvilesMartinezRueda}.

\begin{notation}
   For each $n< \omega$ we shall denote by $f_n$ the evaluation map at coordinate $n$ for elements in $K_{\mathcal{P}}$; namely, the map $f_n(x) = x(n)$.
\end{notation}



From \cite[Proposition 7.2]{AvilesMartinezRueda} it is known that $(f_n)_{n\in\omega}$ is an $L$-orthogonal sequence $\mathcal{I}_\mathcal{P}$-converging in the weak topology to $\mathbbm{1}$, the function with constant value $1$.

\smallskip


In what follows we will bear on the usual identification $C(K_{\mathcal P})^*=M(K_{\mathcal P})$ via \emph{Riesz representation theorem}, where $M(K_{\mathcal P})$ denotes the space of Radon measures over $K_{\mathcal P}$ endowed with the norm of total variation. 


\begin{lemma} \label{theorem on lower from delta}
    Suppose that $\mu$ is a measure for which there is $\delta\ge 0$ and $\mathcal{P}$ a partition such that for any $I\in \mathcal{I}_{\mathcal P}$, $\mu(I)\leq\delta\norm{\mu}$. Then, $$\norm{\mu\text{-}\lim f_n - \norm{\mu}\mathbbm {1}}\leq2\delta\norm{\mu}.$$ 
\end{lemma}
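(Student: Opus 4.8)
The goal is to estimate the distance in $X^{**}=C(K_{\mathcal P})^{**}=M(K_{\mathcal P})^*$ between $\mu\text{-}\lim f_n = T^{**}(\mu)$ and $\norm{\mu}\mathbbm 1$. I would begin by unpacking what $T^{**}(\mu)$ is as a functional on $M(K_{\mathcal P})$. Recall $T\colon\ell_1\to C(K_{\mathcal P})$ sends $e_n$ to $f_n$, so $T^*\colon M(K_{\mathcal P})\to\ell_\infty$ is given by $T^*(\nu)=\big(\int f_n\,d\nu\big)_{n\in\omega}$, and for $\mu\in\ell_\infty^*=\ell_1^{**}$ we have $\langle T^{**}(\mu),\nu\rangle=\langle\mu, T^*(\nu)\rangle=\int_\omega\Big(\int_{K_{\mathcal P}} f_n\,d\nu\Big)\,d\mu(n)$, the integral of the bounded function $n\mapsto\int f_n\,d\nu$ against the finitely additive measure $\mu$. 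Similarly $\langle\norm{\mu}\mathbbm 1,\nu\rangle=\norm{\mu}\int_{K_{\mathcal P}}\mathbbm 1\,d\nu=\norm{\mu}\,\nu(K_{\mathcal P})$, since $\mathbbm 1$ is the constant function $1$. So I must bound, uniformly over $\nu\in M(K_{\mathcal P})$ with $\norm\nu\le 1$,
\[
\Big|\int_\omega \Big(\int_{K_{\mathcal P}} f_n\,d\nu\Big)\,d\mu(n)\;-\;\norm{\mu}\,\nu(K_{\mathcal P})\Big|.
\]

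The key elementary observation is that on $K_{\mathcal P}$ each $f_n$ takes only the values $\pm 1$, and for a fixed block $P_m$ at most one coordinate among $\{f_n:n\in P_m\}$ is $-1$ at any given point of $K_{\mathcal P}$. Hence if $x\in K_{\mathcal P}$, the set $\{n\in\omega: f_n(x)=-1\}$ is a selector of $\mathcal P$ — it meets each $P_m$ in at most one point — and in particular lies in $\mathcal{I}_{\mathcal P}$. Dually, for a fixed $n$, the function $f_n$ equals $1$ everywhere on $K_{\mathcal P}$ except on the clopen set $E_n:=\{x: f_n(x)=-1\}$; write $f_n=\mathbbm 1-2\,\chi_{E_n}$, so $\int f_n\,d\nu=\nu(K_{\mathcal P})-2\,\nu(E_n)$. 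Plugging this in, the quantity to bound collapses to
\[
\Big|\int_\omega\big(\nu(K_{\mathcal P})-2\nu(E_n)\big)\,d\mu(n)-\norm{\mu}\,\nu(K_{\mathcal P})\Big|
=\Big|\,\nu(K_{\mathcal P})\big(\mu(\omega)-\norm\mu\big)-2\!\int_\omega \nu(E_n)\,d\mu(n)\Big|.
\]
At this point I would be careful about the sign/variation of $\mu$: if $\mu$ is positive with $\mu(\omega)=\norm\mu$, the first term vanishes and we are left with $2\big|\int_\omega\nu(E_n)\,d\mu(n)\big|$. (If the lemma is stated for signed $\mu$ one handles the first term separately, but I will read it as: the bound $\mu(I)\le\delta\norm\mu$ together with positivity of the relevant part is what is being used; the $2\delta\norm\mu$ on the right strongly suggests a positive measure with $\mu(\omega)=\norm\mu$, so the whole estimate reduces to bounding $\int_\omega\nu(E_n)\,d\mu(n)$ by $\delta\norm\mu$.)

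So the heart of the matter is: for $\nu\in M(K_{\mathcal P})$ with $\norm\nu\le1$, show $\big|\int_\omega\nu(E_n)\,d\mu(n)\big|\le\delta\norm\mu$. Here I would use the hypothesis that $\mu(I)\le\delta\norm\mu$ for every $I\in\mathcal{I}_{\mathcal P}$. The point is a Fubini-type interchange: $\int_\omega\nu(E_n)\,d\mu(n)=\int_\omega\Big(\int_{K_{\mathcal P}}\chi_{E_n}(x)\,d\nu(x)\Big)d\mu(n)$, and $\chi_{E_n}(x)=1$ iff $n\in S(x):=\{k:f_k(x)=-1\}$, which is a selector, hence in $\mathcal{I}_{\mathcal P}$. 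Interchanging the order of integration (justified since everything is bounded by $1$ and $\mu$ has bounded variation — one can do this via simple-function approximation of $\nu$ by finitely many points, or by splitting $\nu$ into positive and negative parts) gives $\int_\omega\nu(E_n)\,d\mu(n)=\int_{K_{\mathcal P}}\mu\big(S(x)\big)\,d\nu(x)$. Since $S(x)\in\mathcal{I}_{\mathcal P}$ we have $|\mu(S(x))|\le\delta\norm\mu$ for every $x$ (using $\mu$ positive, or applying the hypothesis to both positive and negative parts and adding), whence the integral is bounded in absolute value by $\delta\norm\mu\cdot\norm\nu\le\delta\norm\mu$. Combining, the total distance is at most $2\delta\norm\mu$, as claimed.

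\textbf{Main obstacle.} The only genuinely delicate point is justifying the interchange of the $\mu$-integral (over $\omega$, finitely additive) with the $\nu$-integral (over $K_{\mathcal P}$, countably additive Radon) — ordinary Fubini does not apply verbatim to a merely finitely additive $\mu$. I would circumvent this by approximating $\nu$ in total variation norm by finitely supported measures $\sum_{j\le N}a_j\delta_{x_j}$ (possible since such measures are weak$^*$ dense and one can upgrade to norm approximation on the span, or more simply by a direct partition-of-$K_{\mathcal P}$ argument since each $E_n$ is clopen and the relevant $\sigma$-algebra generated by finitely many $E_n$ is finite): for finitely supported $\nu$ the interchange is a finite sum manipulation, $\int_\omega\nu(E_n)\,d\mu(n)=\sum_j a_j\,\mu(S(x_j))$, and each $S(x_j)$ is visibly a selector of $\mathcal P$. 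Taking the estimate $|\sum_j a_j\mu(S(x_j))|\le\delta\norm\mu\sum_j|a_j|\le\delta\norm\mu$ and passing to the limit finishes the argument. A secondary bookkeeping point is the treatment of signed $\mu$: if the lemma really allows $\mu$ signed, one should split $\mu=\mu^+-\mu^-$, apply $\mu^\pm(I)\le\delta\norm{\mu^\pm}$ — which would need to be part of the hypothesis — but given the clean bound $2\delta\norm\mu$ I expect the intended reading makes $\mu$ positive with $\mu(\omega)=\norm\mu$, and the proof as sketched goes through directly.
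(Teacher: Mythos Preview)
Your reduction is correct and in fact a bit more explicit than the paper's: writing $f_n=\mathbbm 1-2\chi_{E_n}$ collapses the problem (for positive $\mu$, as you correctly read the hypothesis) to bounding $\lvert\mu\big((\nu(E_n))_{n\in\omega}\big)\rvert$ by $\delta\norm\mu$ uniformly over $\nu\in B_{M(K_{\mathcal P})}$. The gap is exactly where you flagged it, but your proposed workarounds do not close it. Finitely supported measures are only weak$^*$-dense in $B_{M(K_{\mathcal P})}$, not norm-dense, and since $T^{**}\mu-\norm\mu\mathbbm 1\in C(K_{\mathcal P})^{**}\setminus C(K_{\mathcal P})$ is not weak$^*$-continuous on $M(K_{\mathcal P})$, testing against a weak$^*$-dense set does not bound its norm. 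The partition argument fares no better: approximating $\nu$ on the finite algebra generated by $E_0,\dots,E_{N-1}$ gives a finitely supported $\nu'$ with $\nu'(E_n)=\nu(E_n)$ for $n<N$, but the tail $(\nu(E_n)-\nu'(E_n))_{n\ge N}$ is supported on $[N,\infty)$, and since $\mu$ vanishes on finite sets it sees that tail with full mass, so the error does not tend to zero. More conceptually, your Fubini would establish the conclusion using only $\mu(S)\le\delta\norm\mu$ for \emph{selectors} $S$, whereas the lemma assumes this for all of $\mathcal I_{\mathcal P}$; the former is strictly weaker, which is another sign that an exact interchange cannot be available.

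The paper's route is essentially a level-set argument, phrased via the weak $\mathcal I_{\mathcal P}^*$-convergence $f_n\to\mathbbm 1$. Concretely: for fixed $\varepsilon>0$ the set $F_\varepsilon^c=\{n:\lvert\nu(f_n)-\nu(\mathbbm 1)\rvert\ge\varepsilon\}=\{n:2\lvert\nu(E_n)\rvert\ge\varepsilon\}$ lies in $\mathcal I_{\mathcal P}$, because the clopens $\{E_n:n\in P_m\}$ are pairwise disjoint for each block $P_m$, so $\sum_{n\in P_m}\lvert\nu\rvert(E_n)\le\norm\nu\le 1$ and at most $\lfloor 2/\varepsilon\rfloor$ indices per block can land in $F_\varepsilon^c$. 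Now the full hypothesis gives $\mu(F_\varepsilon^c)\le\delta\norm\mu$, and splitting the $\mu$-integral over $F_\varepsilon$ and its complement yields $\lvert\mu((\nu(f_n)-\nu(\mathbbm 1))_n)\rvert\le\varepsilon\norm\mu+2\delta\norm\mu$; let $\varepsilon\to 0$. Note this genuinely uses that $F_\varepsilon^c\in\mathcal I_{\mathcal P}$, not merely that it is a selector.
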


\begin{proof} Assume $\mu$ is normalized. 
    Let $\mathcal P$ and $\delta>0$  be as in the hypothesis. Consider $T$ the operator used to define the $\mu\text{-}\lim$. For each $\nu\in M(K_{\mathcal P})$ of norm less than or equal to 1, 
$$T^{**}(\mu)(\nu)=\mu(T^*(\nu))=\mu((\nu(f_n))_{n}).$$

Let $\varepsilon>0$. To conclude the proof it is enough to show that 
$$\lvert \mu ((\nu(f_n))_{n\in \N}))-\nu(\mathbbm 1)\lvert\leq \varepsilon +2\delta.$$
We know $\mathcal (\mathcal{I}_{\mathcal P})^*$-$\lim f_n=\mathbbm 1$ in the weak topology.  Therefore, if $F_\varepsilon:=\{n<\omega: \lvert\nu(f_n)-\nu(\mathbbm 1)\lvert<\varepsilon\}$ then $F_\varepsilon \in (\mathcal I_{\mathcal P})^*$. By hypothesis this means $\mu(\omega\setminus F_\varepsilon)\leq\delta$.
So, $$\lvert \mu ((\nu(f_n))_{n\in \N}))-\nu(\mathbbm 1)\lvert\leq$$$$\lvert\mu((\nu(f_n)-\nu(\mathbbm 1))_{n\in \omega}\cdot\rchi_{ F_\varepsilon})\lvert+ \lvert \mu((\nu(f_n)-\nu(\mathbbm 1))_{n\in \omega}\cdot\rchi_{ \omega\setminus F_\varepsilon})\lvert,$$
where $\rchi_A$ is the characteristic function of $A$, and the multiplication is on each coordinate.
Since $\|(\nu(f_n)-\nu(\mathbbm 1))_{n\in \omega}\cdot\rchi_{ F_\varepsilon}\|_\infty \leq \varepsilon$, we have that $$\lvert\mu((\nu(f_n)-\nu(\mathbbm 1))_{n\in \omega}\cdot\rchi_{ F_\varepsilon})\lvert \leq \varepsilon.$$ 
Moreover, since $\|f_n-\mathbbm 1\| = 2$ for every $n$ and $\mu(\omega\setminus F_\varepsilon)\leq\delta$, we have that
$$\lvert \mu((\nu(f_n)-\nu(\mathbbm 1))_{n\in \omega}\cdot\rchi_{ \omega\setminus F_\varepsilon})\lvert\leq 2\delta,$$
which finishes the proof.
\end{proof}


\begin{definition}
For $\delta>0$, a measure $\mu\colon \mathcal{P}(\omega)\to [0,+\infty)$ is called a \emph{$\delta$-fit $Q$-measure} if for any $\{A_n\mid n\in \omega\}$ partition of $\omega$ into finite sets there is $A\subseteq \omega$ and $k\in \omega$ such that $\mu(A)\ge\delta \mu(\omega)$ and $\lvert A_n \cap A \lvert \leq k$ for any $n\in \omega$.  Those measures that are $\delta$-fit $Q$-measures for all $\delta<\frac{1}{2}$ are called fit $Q$-measures.
\end{definition}

The following result will play a key role in the next subsection.

\begin{lemma}\label{Ready for Main teo}
    Let $x^{**}\in S_{\ell_1^{**}}=S_{\ell_\infty^*}$ be a measure whose positive and negative parts are not fit $Q$-measures. Then, there is a bounded operator $T:\ell_1\to C(K)$, $\|T\|= 1$, with $K$ homeomorphic to the Cantor set, and a constant $c\in \mathbb{R}$ such that $(Te_n)_{n\in \omega}$ is an $L$-orthogonal sequence but $$\|T^{**}x^{**}-c\mathbbm{1}\|< \|x^{**}\|,$$
    so $T^{**}x^{**}$ is not $L$-orthogonal.
\end{lemma}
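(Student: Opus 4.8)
The plan is to decompose $x^{**}$ as $\mu^+ - \mu^-$ and to build, for each of the two parts separately, a space $K^{\pm}_{\mathcal P}$ of the form defined above together with an $L$-orthogonal sequence; then to glue these into a single Cantor-type compactum $K$. The starting observation is that, unwinding the definition, saying that $\mu^+$ is \emph{not} a $\delta$-fit $Q$-measure for some $\delta<\tfrac12$ means there is a partition $\mathcal P^+$ into finite sets such that \emph{every} $A$ which meets each block boundedly (i.e.\ $A\in\mathcal I_{\mathcal P^+}$) has $\mu^+(A)\le\delta\,\mu^+(\omega)$; likewise for $\mu^-$ with some partition $\mathcal P^-$ and some $\delta'<\tfrac12$. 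Fix $\delta_0<\tfrac12$ larger than both $\delta,\delta'$, so that \emph{both} $\mathcal I_{\mathcal P^+}$-small sets have $\mu^+$-mass $\le\delta_0\|\mu^+\|$ and $\mathcal I_{\mathcal P^-}$-small sets have $\mu^-$-mass $\le\delta_0\|\mu^-\|$. (One can even arrange $\mathcal P^+ = \mathcal P^-$ by passing to a common refinement, since refining a partition only shrinks the associated ideal; call the common partition $\mathcal P$.)

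**Applying the previous lemma.** With a single partition $\mathcal P$ in hand, form $K_{\mathcal P}$ and recall from \cite[Proposition 7.2]{AvilesMartinezRueda} that $(f_n)_{n\in\omega}$ is an $L$-orthogonal sequence in $C(K_{\mathcal P})$ that $\mathcal I_{\mathcal P}$-converges weakly to $\mathbbm 1$. Apply Lemma~\ref{theorem on lower from delta} \emph{twice}: once to $\mu^+$ (after normalizing), giving $\|\mu^+\text{-}\lim f_n - \|\mu^+\|\mathbbm 1\|\le 2\delta_0\|\mu^+\|$, and once to $\mu^-$, giving $\|\mu^-\text{-}\lim f_n - \|\mu^-\|\mathbbm 1\|\le 2\delta_0\|\mu^-\|$. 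Here I am using that $\mu^+$-mass and $\mu^-$-mass of every $I\in\mathcal I_{\mathcal P}$ are controlled by $\delta_0$ times the respective total masses, which is exactly the hypothesis of that lemma. Since the $\mu$-limit is linear in $\mu$ (it is $T^{**}$), we get
\[
\|x^{**}\text{-}\lim f_n - (\|\mu^+\|-\|\mu^-\|)\mathbbm 1\|
= \|\mu^+\text{-}\lim f_n - \mu^-\text{-}\lim f_n - (\|\mu^+\|-\|\mu^-\|)\mathbbm 1\|
\le 2\delta_0\big(\|\mu^+\|+\|\mu^-\|\big) = 2\delta_0,
\]
using $\|\mu^+\|+\|\mu^-\| = \|x^{**}\| = 1$. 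Since $2\delta_0<1 = \|x^{**}\|$, taking $c:=\|\mu^+\|-\|\mu^-\|$ and $T\colon\ell_1\to C(K_{\mathcal P})$ defined by $Te_n=f_n$ (which has norm $1$, as $\|f_n\|_\infty=1$) yields $\|T^{**}x^{**}-c\mathbbm 1\|\le 2\delta_0<\|x^{**}\|$, and $K:=K_{\mathcal P}$ is homeomorphic to the Cantor set. The sequence $(Te_n)=(f_n)$ is $L$-orthogonal, and if $T^{**}x^{**}$ were an $L$-orthogonal element then, since $-c\mathbbm 1\in C(K_{\mathcal P})$, we would need $\|T^{**}x^{**}-c\mathbbm 1\| = 1+\|c\mathbbm 1\|\ge 1 = \|x^{**}\|$, a contradiction; so $T^{**}x^{**}$ is not $L$-orthogonal.

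**Main obstacle.** The delicate point is the interplay between ``not a fit $Q$-measure'' for the \emph{signed} measure $x^{**}$ versus its positive and negative parts, and making Lemma~\ref{theorem on lower from delta} applicable to each part with a \emph{common} partition. Two things need care: (i) the equivalence ``$\mu^+$ is not $\delta$-fit $Q$'' $\iff$ ``every $A\in\mathcal I_{\mathcal P^+}$ has $\mu^+(A)\le\delta\|\mu^+\|$'' must be checked carefully using Proposition~\ref{lemma:characterizingQpoints}-style finite-to-one reformulations and the fact that if some $A$ with $\sup_n|A\cap P_n|<\infty$ had large $\mu^+$-measure we could split it into finitely many genuine partial selectors, one of which still has large measure; and (ii) replacing $\mathcal P^+,\mathcal P^-$ by a common refinement $\mathcal P$ must preserve both smallness statements, which works because $\mathcal I_{\mathcal P}\subseteq\mathcal I_{\mathcal P^+}\cap\mathcal I_{\mathcal P^-}$ when $\mathcal P$ refines both (coarser blocks give a larger ideal; refining shrinks it — one must double-check the direction here, as a refinement of the partition into \emph{larger} pieces is what enlarges $\mathcal I$). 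Once the bookkeeping on $\delta_0<\tfrac12$ and the common partition is set up correctly, the rest is the two-fold application of Lemma~\ref{theorem on lower from delta} and the triangle inequality, exactly as above; no further gluing of spaces is actually needed for this lemma (that amalgamation is deferred to the next subsection).
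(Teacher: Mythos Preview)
There is a genuine gap in the ``common refinement'' step, and it is precisely the direction you flag as needing to be double-checked. If $\mathcal P$ refines $\mathcal P^+$ (each $\mathcal P$-block sits inside some $\mathcal P^+$-block), then for any $B$ we have $|B\cap P|\le |B\cap P^+|$ whenever $P\subseteq P^+$, so $\mathcal I_{\mathcal P^+}\subseteq\mathcal I_{\mathcal P}$: refining \emph{enlarges} the ideal, not shrinks it. Hence the containment $\mathcal I_{\mathcal P}\subseteq\mathcal I_{\mathcal P^+}\cap\mathcal I_{\mathcal P^-}$ you need goes the wrong way for a common refinement. What you would actually need is a common \emph{coarsening} of $\mathcal P^+$ and $\mathcal P^-$ into finite blocks, and such a thing need not exist (even for interval partitions: the two endpoint sets can be eventually disjoint). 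So the reduction to a single partition cannot be done by this kind of lattice argument.

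The paper circumvents this by exploiting that $\mu^+$ and $\mu^-$ are (approximately) supported on disjoint pieces of $\omega$. Using regularity in $\beta\omega$, one finds a clopen splitting $\omega=B_+\cup B_-$ with $\mu^+(B_-)<\varepsilon\|\mu^+\|$ and $\mu^-(B_+)<\varepsilon\|\mu^-\|$, and then sets
\[
\mathcal P=\{P\cap B_+:P\in\mathcal P^+\}\cup\{P\cap B_-:P\in\mathcal P^-\}.
\]
For $I\in\mathcal I_{\mathcal P}$ one gets $I\cap B_+\in\mathcal I_{\mathcal P^+}$, so $\mu^+(I\cap B_+)\le\delta\|\mu^+\|$, while $\mu^+(I\cap B_-)\le\mu^+(B_-)<\varepsilon\|\mu^+\|$; altogether $\mu^+(I)\le(\delta+\varepsilon)\|\mu^+\|$, and symmetrically for $\mu^-$. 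After that, your two applications of Lemma~\ref{theorem on lower from delta} and the triangle inequality go through exactly as you wrote, with $c=\|\mu^+\|-\|\mu^-\|$ and $2(\delta+\varepsilon)<1$. A second, minor, omission: the paper treats separately the case when $x^{**}$ does not vanish on finite sets (where the fit-$Q$ hypothesis is not even in force under the paper's conventions); there one takes $K=\{-1,1\}^\omega$, sets $Te_n=0$ at an atom and $Te_k=f_k$ elsewhere, and $c=0$.
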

\begin{proof} 
We first consider the case in which $x^{**}$ does not vanish on all finite sets of $\omega$. In that case there exists $n\in \omega$ such that $x^{**}(\{n\})\neq 0$. Then $x^{**}=x^{**}(\{n\})e_n+y^{**}$, where $y^{**}\in \ell_1^{**}$ satisfies $$\|x^{**}\|=|x^{**}(\{n\})|+\|y^{**}\|.$$
Take $K=\{-1,1\}^\omega$, $T\colon \ell_1 \longrightarrow C(K)$ given by $T(e_k)=f_k$ whenever $k \neq n$ and $T(e_n)=0$, and $c=0$.
Then it is immediate that $\|T\|=1$,  $(Te_n)_{n\in \omega}$ is an $L$-orthogonal sequence but $$\|T^{**}x^{**}\|=\|T^{**}(x^{**}(\{n\})e_n+y^{**})\|=\|T^{**}(y^{**})\|\leq \|y^{**}\|< \|x^{**}\|$$
as desired.

Thus, from now on we suppose that $x^{**}$ vanishes on all finite sets of $\omega$. We identify $\ell_1^{**} = C(\beta\omega)^*$ as the space of signed regular Borel measures on $\beta\omega$ and consider the Hahn decomposition into positive and negative part of a signed measure, $x^{**} = \mu^+ - \mu^-$ so that $\mu^+$ and $\mu^-$ are supported inside disjoint Borel subsets $\Omega_+$ and $\Omega_-$ of $\beta\omega$. By hypothesis, $\mu^+$ and $\mu^-$ fail to be fit $Q$-measures. This means that there exists $\delta<\frac{1}{2}$ and partitions $\mathcal{P}^+$ and $\mathcal{P}^-$ of $\omega$ into finite sets such that $\mu^{\star}(I)<\delta\|\mu^\star\|$ for every $I\in \mathcal{I}_\mathcal{P^\star}$ and $\star\in \{+,-\}$. Fix $\varepsilon>0$ such that $\delta+\varepsilon<\frac{1}{2}$. By the regularity of the measures we can find closed sets $A_+ \subset \Omega_+$ and $A_- \subset \Omega_-$ such that 
$$\mu^+(\beta\omega\setminus A_+) < \varepsilon \|\mu^+\|,\ \  \mu^-(\beta\omega\setminus A_-) <\varepsilon\|\mu^-\|.$$
We can find a decomposition of $\beta\omega$ into two disjoint clopen sets $\beta\omega = L_+\cup L_-$ such that $A_+\subset L_+$ and $A_-\subset L_-$. They correspond to a partition $\omega = B_+\cup B_-$ of $\omega$. Define a new partition 
$$ \mathcal{P} = \{ P\cap B_+ : P\in \mathcal{P}^+\} \cup \{ P\cap B_- : P\in \mathcal{P}^- \},$$
together with the corresponding $K=K_\mathcal{P}$ (recall that $K_{\mathcal{P}}$ is homeomorphic to the Cantor set), functions $f_n = Te_n$ and operator $T\colon \ell_1\rightarrow C(K)$ as before.
If $I\in\mathcal{I}_\mathcal{P}$, then
$$\mu^+(I) = \mu^+(I\cap B_+) + \mu^+(I\cap B_-) \leq \delta \|\mu^+\| + \varepsilon\|\mu^+\| = (\delta+\varepsilon)\|\mu^+\|, $$
and similarly
$$\mu^-(I) \leq  (\delta+\varepsilon)\|\mu^-\|.$$

 By Lemma~\ref{theorem on lower from delta}, we have 
 $$\norm{\mu^+\text{-}\lim f_n - \norm{\mu^+}\mathbbm {1}}\leq2(\delta+\varepsilon)\norm{\mu^+},$$
  $$\norm{\mu^-\text{-}\lim f_n - \norm{\mu^-}\mathbbm {1}}\leq2(\delta+\varepsilon)\norm{\mu^-}.$$

 Remember that $\nu\text{-}\lim f_n = T^{**}\nu $ by definition, so adding up and using triangle inequality, we get 
  $$\norm{T^{**}x^{**}- (\norm{\mu^+}-\norm{\mu^-})\mathbbm {1}}\leq2(\delta+\varepsilon)\norm{x^{**}} < \norm{x^{**}}.$$
\end{proof}
\begin{remark}
 In the model of Theorem~\ref{thm: no Q points} where no $Q^+(\omega)$-measures exist any element of $S_{\ell_1^{**}}$ fulfills the above-displayed hypothesis.     
\end{remark}

Let us summarize what we know. Given a positive measure $\mu$, to ensure that any $\mu\text{-}\lim$ through an $L$-orthogonal sequence is an $L$-orthogonal element it is \emph{sufficient} for $\mu$ to be \emph{strong} $Q$. To ensure that the $\mu\text{-}\lim$ is $L$-orthogonal it is \emph{necessary} for $\mu$ to be \emph{fit} $Q$. But we do not know which property, if any, characterizes that the $\mu\text{-}\lim$ is $L$-orthogonal.

\subsection{A space without $L$-orthogonal elements}\label{sec: A space without $L$-orthogonal elements}

The goal of this subsection is to complete the proof of Theorem~\ref{Main Main theorem} from the introduction to this paper. The more precise formulation, after Theorem~\ref{thm: no Q points}, is the following:

\begin{theorem}\label{theo:sucesinLorto}
Suppose no fit $Q$-measures exist. Then there is a Banach space $X$ with an $L$-orthogonal sequence whose bidual $X^{**}$ does not carry  $L$-orthogonal elements.
\end{theorem}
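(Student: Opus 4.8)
The strategy is to amalgamate all the spaces produced by Lemma~\ref{Ready for Main teo} into a single Banach space, using an $\ell_1$-sum (more precisely, a $c_0$- or $C(K)$-type amalgamation of the $C(K_\mu)$'s) so that any $L$-orthogonal element of the bidual must project to an $L$-orthogonal element in one of the factors, and then derive a contradiction. Concretely, I would fix an enumeration (or a directed family) of all pairs $(\mu, c)$ where $\mu \in S_{\ell_1^{**}}$ is a norm-one finitely additive measure on $\mathcal{P}(\omega)$ and $c \in \mathbb{R}$; under the hypothesis ``no fit $Q$-measures exist'', the positive and negative parts of every such $\mu$ fail to be fit $Q$-measures, so Lemma~\ref{Ready for Main teo} applies to each of them, yielding an operator $T_\mu \colon \ell_1 \to C(K_\mu)$ of norm $1$ with $(T_\mu e_n)_n$ an $L$-orthogonal sequence and a constant $c_\mu$ with $\|T_\mu^{**}\mu - c_\mu \mathbbm 1\| < \|\mu\| = 1$.

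**Construction of the amalgam.** I would then build $X$ as a suitable sum of the $C(K_\mu)$'s together with a "diagonal" copy of $\ell_1$ that maps into each coordinate via $T_\mu$. The cleanest way: let $X$ be the closed subspace of $\big(\bigoplus_\mu C(K_\mu)\big)_{\ell_\infty}$ (or the $c_0$-sum, to keep things separable-ish in each coordinate) generated by the image of the "diagonal" operator $T\colon \ell_1 \to \bigoplus_\mu C(K_\mu)$, $Te_n = (T_\mu e_n)_\mu$, together with all the individual algebras; equivalently, take $X = \overline{\mathrm{span}}\big(T[\ell_1] \cup \bigcup_\mu C(K_\mu)\big)$ inside the $\ell_\infty$-sum. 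The sequence $x_n := Te_n$ is then $L$-orthogonal in $X$: each coordinate $(T_\mu e_n)_n$ is $L$-orthogonal in $C(K_\mu)$, and one checks that $\lim_n \|x + x_n\| = 1 + \|x\|$ coordinatewise-sup passes through, using that the $T_\mu e_n$ have norm $1$ and the defining limit property holds simultaneously (this is where one uses that the $x_n$ are genuinely norm-one in the sup-norm and the $L$-orthogonality of each factor). Here the key point is that for each fixed $\mu$, the coordinate projection $\pi_\mu \colon X \to C(K_\mu)$ is a norm-one operator sending $x_n \mapsto T_\mu e_n$.

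**Deriving the contradiction.** Suppose toward a contradiction that $z^{**} \in X^{**}$ is an $L$-orthogonal element, i.e. $\|x + z^{**}\| = 1 + \|x\|$ for all $x \in X$. By scaling we may assume $\|z^{**}\| = 1$. The main lemma to establish is that $z^{**}$ is forced to be of the form $T^{**}\mu$ for some $\mu \in S_{\ell_1^{**}}$, up to the action of the factors: since $(x_n)_n$ is $L$-orthogonal and $z^{**}$ is an $L$-orthogonal element lying in the $w^*$-closure of the relevant convex structure, results from \cite{AvilesMartinezRueda} (the analogue of \cite[Lemma~3.3]{AvilesMartinezRueda} / the Maurey-type uniqueness of canonical $\ell_1$-types) should give that $z^{**} = T^{**}\mu$ for some finitely additive $\mu$ on $\mathcal{P}(\omega)$ with $\|\mu\|_{\ell_1^{**}} = 1$ — one recovers $\mu$ from the action of $z^{**}$ on the diagonal $\ell_1$. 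But then applying $\pi_\mu^{**}\colon X^{**} \to C(K_\mu)^{**}$, which is norm-one and maps $z^{**}$ to $T_\mu^{**}\mu$, and using that $\pi_\mu^{**}$ sends $L$-orthogonal elements relative to $(x_n)$ to $L$-orthogonal elements relative to $(T_\mu e_n)$ (because the constant function $\mathbbm 1 \in C(K_\mu) \subseteq X$, so $\|c_\mu \mathbbm 1 - z^{**}\| \geq \|c_\mu \mathbbm 1 - T_\mu^{**}\mu\|$ on one side and $= 1 + |c_\mu|$ on the other by $L$-orthogonality), we contradict the inequality $\|T_\mu^{**}\mu - c_\mu\mathbbm 1\| < 1$ from Lemma~\ref{Ready for Main teo}.

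**Main obstacle.** The delicate step is the uniqueness/representation claim: showing that every $L$-orthogonal element of $X^{**}$ must arise as $T^{**}\mu$ for a norm-one $\mu$, rather than having a component "outside" the diagonal $\ell_1$ — in particular ruling out $L$-orthogonal elements supported on the $C(K_\mu)$ summands themselves or on the part of $X^{**}$ not seen by the diagonal. This requires care about how $X$ sits inside the $\ell_\infty$-sum and about which functionals in $X^*$ detect $(x_n)$; the right framework is to observe that an $L$-orthogonal element $z^{**}$ gives a canonical $\ell_1$-type $\tau(x) = 1 + \|x\|$, and that restricting this type to the separable subspace generated by $z^{**}$ together with any countable subset forces $z^{**}$ to be a $w^*$-limit of a convex block sequence of the $x_n$'s — hence to live in the $w^*$-closed convex hull of $\{x_n\}$, which is exactly $T^{**}[\text{measures}]$. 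Once that is in place the rest is a clean diagram chase through the norm-one projections $\pi_\mu$, and Theorem~\ref{theo:sucesinLorto} follows, completing the proof of Theorem~\ref{Main Main theorem} in combination with Theorem~\ref{thm: no Q points}.
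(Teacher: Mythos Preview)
Your overall strategy --- amalgamate the spaces from Lemma~\ref{Ready for Main teo} and project --- matches the paper's, but the crucial step where you identify the form of an arbitrary $L$-orthogonal element has a genuine gap, and the paper handles it by a different (and simpler) device.

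The problem is your ``representation claim'': that any $L$-orthogonal $z^{**}\in X^{**}$ must satisfy $z^{**}=T^{**}\mu$ for some $\mu\in S_{\ell_1^{**}}$. Nothing forces an $L$-orthogonal element to lie in the $w^*$-closed convex hull of the distinguished sequence $(x_n)$. The Maurey-type arguments you invoke (and \cite[Lemma~3.3]{AvilesMartinezRueda}) concern elements already known to lie in such a hull; they do not give a uniqueness theorem of the kind you need. In your space, where each full $C(K_\mu)$ sits as a summand, the bidual is large and an $L$-orthogonal element could in principle have components orthogonal to the diagonal $T[\ell_1]$, so the reduction to $z^{**}=T^{**}\mu$ is unjustified.

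The paper sidesteps this entirely by making the amalgam much smaller: it takes $X=\ell_1\oplus c_0(\mathcal{S})$ (with $\mathcal{S}=S_{\ell_1^{**}}$) under the norm $\|(x,y)\|=\sup_{\mu}\{\|x\|_1,\|T_\mu x+y_\mu\mathbbm{1}\|_\infty\}$, which is isomorphic to $\ell_1\oplus_1 c_0(\mathcal{S})$. Hence $X^{**}=\ell_1^{**}\oplus\ell_\infty(\mathcal{S})$ as a direct sum, and any $z^{**}$ automatically splits as $(x^{**},y^{**})$ --- no representation lemma needed. If $x^{**}=0$ then $y^{**}$ would be $L$-orthogonal in $c_0(\mathcal{S})^{**}$, impossible since $c_0(\mathcal{S})$ contains no $\ell_1$. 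If $x^{**}\neq 0$, set $\mu=x^{**}/\|x^{**}\|$ and project onto the $\mu$-coordinate: the image is $T_\mu^{**}x^{**}+y_\mu^{**}\mathbbm{1}$, and the $y^{**}$-contribution is merely a scalar multiple of $\mathbbm{1}$, so it is absorbed into the constant $c_\mu$ and the inequality $\|T_\mu^{**}\mu-c_\mu\mathbbm{1}\|<1$ still kills $L$-orthogonality to constants. The key design choice is that only $\mathbb{R}\mathbbm{1}$, not all of $C(K_\mu)$, is added in each coordinate; this is exactly what keeps the ``off-diagonal'' part of the bidual equal to $\ell_\infty(\mathcal{S})$ and hence harmless.
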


 We commence with a series of auxiliary lemmata about $L$-orthogonal elements which will set the grounds to tackle  Theorem~\ref{theo:sucesinLorto}.  Given a (non-empty) indexed family of Banach spaces $\mathcal{X}=\{X_i: i\in\mathcal{I}\}$ we  denote by $(\bigoplus_{i\in\mathcal{I}}X_i)_\infty$ the \emph{$\ell_\infty$-sum of $\mathcal{X}$}; to wit, the Banach space consisting of sequences $(x_i)_{i\in\mathcal{I}}\in \prod_{i\in\mathcal{I}}X_i$ such that $\sup_{i\in\mathcal{I}}\|x_i\|_i<\infty$ endowed with the natural norm. The (canonical) projection map between $\prod_{i\in\mathcal{I}}X_i$ and $X_i$ given by $(x_i)_{i\in\mathcal{I}}\mapsto x_i$ will be denoted by $P_i$. In cases where $\mathcal{X}$ is finite  we use $X_1\oplus_\infty \dots\oplus_\infty X_n$ in place of the more opaque notation  $(\bigoplus_{i\leq n}X_i)_\infty$.

\begin{lemma}
\label{lemaux1}
Suppose that $\{X_i: i\in\mathcal{I}\}$ is an  indexed family of  Banach spaces. Suppose also that for each $i\in\mathcal{I}$ we are given  an $L$-orthogonal sequence $(x_{i,n})_n$ in the Banach space $X_i$.  Then   $(x_n)_n$ defined by $$x_n:=(x_{i,n})_{i \in \mathcal{I}}$$ yields  an $L$-orthogonal sequence in  the $\ell_\infty$-sum space $\left(
\bigoplus_{i \in \mathcal{I}} X_i \right)_\infty.$
\end{lemma}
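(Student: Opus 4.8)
The statement is essentially the assertion that $L$-orthogonality is preserved under taking $\ell_\infty$-sums coordinatewise. Recall that an $L$-orthogonal sequence in a Banach space $Y$ is a sequence $(y_n)_n$ in the unit ball with $\lim_n \|y + y_n\| = 1 + \|y\|$ for every $y \in Y$. So the plan is: first check that $(x_n)_n$ lies in the unit ball of $\left(\bigoplus_{i\in\mathcal{I}} X_i\right)_\infty$, which is immediate since $\|x_n\| = \sup_{i\in\mathcal{I}}\|x_{i,n}\|_i \le 1$ because each $(x_{i,n})_n$ is in $B_{X_i}$. Then fix an arbitrary $x = (x_i)_{i\in\mathcal{I}}$ in the sum space and analyze $\lim_n \|x + x_n\|$.

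The key computation is the following. For each $n$,
\[
\|x + x_n\| = \sup_{i\in\mathcal{I}} \|x_i + x_{i,n}\|_i.
\]
The inequality $\liminf_n \|x + x_n\| \le 1 + \|x\|$ follows because each $\|x_i + x_{i,n}\|_i \le \|x_i\|_i + 1 \le \|x\| + 1$, so in fact $\|x+x_n\| \le 1 + \|x\|$ for all $n$; hence $\limsup_n \|x+x_n\| \le 1 + \|x\|$. For the reverse inequality $\liminf_n \|x+x_n\| \ge 1 + \|x\|$: given $\varepsilon > 0$, pick $i_0 \in \mathcal{I}$ with $\|x_{i_0}\|_{i_0} > \|x\| - \varepsilon$. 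Since $(x_{i_0,n})_n$ is $L$-orthogonal in $X_{i_0}$, there is $N$ such that for all $n \ge N$ we have $\|x_{i_0} + x_{i_0,n}\|_{i_0} > 1 + \|x_{i_0}\|_{i_0} - \varepsilon > 1 + \|x\| - 2\varepsilon$. Therefore $\|x + x_n\| \ge \|x_{i_0} + x_{i_0,n}\|_{i_0} > 1 + \|x\| - 2\varepsilon$ for all $n \ge N$. Letting $\varepsilon \to 0$ gives $\lim_n \|x + x_n\| = 1 + \|x\|$, as required.

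I do not anticipate any real obstacle here; the only mild subtlety is that the index set $\mathcal{I}$ may be uncountable, so one cannot choose a single $i$ that "achieves" the supremum $\|x\|$ — but the $\varepsilon$-approximation argument above sidesteps that completely, and crucially the lower-bound step only needs $L$-orthogonality of one coordinate sequence, while the upper bound is a trivial norm estimate valid coordinatewise. One should also remember that "for every $x$" in the definition of $L$-orthogonal sequence ranges over the whole space $X$, not merely a dense subset, but here we have literally verified it for an arbitrary element of the $\ell_\infty$-sum, so nothing further is needed. The proof is therefore a short two-inequality argument.
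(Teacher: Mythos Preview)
Your proof is correct and follows essentially the same approach as the paper: bound $\|x+x_n\|$ above trivially by $1+\|x\|$, and below by each coordinate norm $\|x_i+x_{i,n}\|_i$, then use $L$-orthogonality in the coordinates. You are in fact slightly more careful than the paper, which writes $\lim_n$ throughout before establishing existence of the limit and skips the $\varepsilon$-approximation of the supremum.
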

\begin{proof}
Notice that for any $x=(x_i)_{i\in\mathcal{I}} \in \left(
\bigoplus_{i \in \mathcal{I}} X_i \right)_\infty$ we have
$$ \lim_n\|x+x_n\|\geq \lim_n \|x_i+x_{i,n}\|_i=1+\|x_i\|_i$$
for every $i\in \mathcal{I}$, so 
$$ 1+\|x\|\geq \lim_n\|x+x_n\| \geq \sup_{i\in \mathcal{I}}(1+\|x_i\|_i) =1+\|x\|$$
as desired.
\end{proof}
 The next technical lemma connects $L$-orthogonal elements of subspaces $X$ of $(X_1 \oplus_\infty X_2)$ with $L$-orthogonal elements of the \emph{fiber spaces} $X\cap X_i$. Given Banach spaces $X_1, X_2$ notice that $(X_1 \oplus_\infty X_2)^{**}=X_1^{**} \oplus_\infty X_2^{**}$.

\begin{lemma}
\label{lemaux2}
Suppose $X$ is a subspace of $X_1 \oplus_\infty X_2$ carrying an $L$-orthogonal element $x^{**}=x_1^{**}+x_2^{**} \in X^{**}\subseteq X_1^{**} \oplus_\infty X_2^{**}$. For each $i\in\{1,2\}$, $x_i^{**}$ is $L$-orthogonal  to $X \cap X_i$ provided $X \cap X_i \neq \{0\}$.\footnote{An element $x^{**}\in X^{**}$ is \textit{$L$-orthogonal to a subspace} $Y\subseteq X$ if $\|y+x^{**}\|=\|y\|+1$ for every $y\in Y$.} 
\end{lemma}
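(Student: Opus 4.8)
The plan is to fix $i=1$ (the case $i=2$ being symmetric) and a nonzero $y \in X \cap X_1 \subseteq X_1 \oplus_\infty X_2$, and show $\|y + x_1^{**}\| = \|y\| + 1$. Since $x^{**} = x_1^{**} + x_2^{**}$ is $L$-orthogonal to $X$ and $y \in X$, we know $\|y + x^{**}\| = \|y\| + 1$. Now $y$, viewed in $X_1 \oplus_\infty X_2$, has second coordinate $0$, so in $(X_1\oplus_\infty X_2)^{**} = X_1^{**}\oplus_\infty X_2^{**}$ we have $y + x^{**} = (y + x_1^{**}) + x_2^{**}$ with the two summands living in $X_1^{**}$ and $X_2^{**}$ respectively. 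By the definition of the $\ell_\infty$-norm on the sum, $\|y + x^{**}\| = \max\{\|y + x_1^{**}\|_1,\ \|x_2^{**}\|_2\}$. Hence $\|y\| + 1 = \max\{\|y+x_1^{**}\|_1, \|x_2^{**}\|_2\}$.

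Next I would establish the two inequalities pinning down $\|y + x_1^{**}\|_1$. For the upper bound, $\|y + x_1^{**}\|_1 \le \|y\|_1 + \|x_1^{**}\|_1 \le \|y\|_1 + \|x^{**}\|$; and since $x^{**}$ is $L$-orthogonal to the nonzero element $y$ we have $\|x^{**}\| = 1$ (apply the defining identity with a scalar multiple of $y$ and let the scalar tend to $0$, or note $\|x^{**}\|=\lim_{t\to 0^+}\|ty+x^{**}\|=\lim_{t\to 0^+}(t\|y\|+1)=1$), so $\|y+x_1^{**}\|_1 \le \|y\|_1 + 1 = \|y\| + 1$. For the lower bound I will argue that the maximum above cannot be attained by the second coordinate: since $\|x_2^{**}\|_2 \le \|x^{**}\| = 1 \le \|y\| + 1$, and in fact we need strict-enough control — here is where care is needed — I would instead argue directly that $\|y + x_1^{**}\|_1 \ge \|y\|_1 + 1$. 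To see this, pick a norming functional: for each $\eta>0$ choose $\varphi_1 \in X_1^*$ with $\|\varphi_1\|\le 1$ and $\varphi_1(y) > \|y\|_1 - \eta$. Extend $\varphi_1$ to $\Phi \in (X_1\oplus_\infty X_2)^*$ by $\Phi(u_1,u_2) = \varphi_1(u_1)$, which still has norm $\le 1$. Then $\Phi$ restricts to $X$, and evaluating the canonical extension $\Phi^{**}$ at $y + x^{**}$ gives $\Phi^{**}(y+x^{**}) = \varphi_1(y) + x_1^{**}(\varphi_1)$, while on the other hand applying $L$-orthogonality in $X$ to the element $y$ forces $\|y + x^{**}\| = \|y\| + 1$ to be (nearly) witnessed by such functionals. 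The cleanest route: the function $z \mapsto \|z + x^{**}\|$ is a Maurey type with $\|z+x^{**}\| = \|z\| + 1$ on all of $X$; restricting attention to the one-dimensional (or finite-dimensional) slice $X\cap X_1$ and using that $x_2^{**}$ contributes nothing to coordinates in $X_1$, one gets that the $\ell_\infty$-max must be achieved in the first coordinate for elements of $X\cap X_1$ of norm $>\|x_2^{**}\|_2 - 1$; scaling $y$ up handles the bounded exception, and homogeneity of the $L$-orthogonality identity then propagates it to all of $X\cap X_1$.

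Concretely, for $y \in X\cap X_1$ and $t>0$, the element $ty \in X$ satisfies $\|ty + x^{**}\| = t\|y\| + 1$, i.e. $\max\{\|ty + x_1^{**}\|_1, \|x_2^{**}\|_2\} = t\|y\|_1 + 1$. For $t$ large enough that $t\|y\|_1 + 1 > \|x_2^{**}\|_2$ (possible since $y\ne 0$), the maximum is the first coordinate, so $\|ty + x_1^{**}\|_1 = t\|y\|_1 + 1$. Dividing by $t$ and rearranging, $\|y + \tfrac1t x_1^{**}\|_1 = \|y\|_1 + \tfrac1t$; since $X_1^{**}$ is complete and the norm is continuous, letting $t$ decrease we would like to conclude the identity at $t=1$, but $\tfrac1t x_1^{**}$ varies, so instead I note the identity $\|sy + x_1^{**}\|_1 = s\|y\|_1 + 1$ holds for all large $s$ (rewriting: replace $y$ by $sy$ above, the condition becomes $s\|y\|_1+1>\|x_2^{**}\|_2$), and then invoke that $z\mapsto\|z+x_1^{**}\|_1 - \|z\|_1$ is $1$ on an unbounded subset of the ray through $y$; since this difference is a bounded, $1$-Lipschitz-type function that is convex along the ray and equals $1$ for all large $s$, and equals $\|x_1^{**}\|_1=1$ at $s=0$ only if... — cleaner: the function $g(s):=\|sy+x_1^{**}\|_1$ is convex, $g(s)=s\|y\|_1+1$ for $s\ge s_0$, and $g(s)\le s\|y\|_1+1$ for all $s\ge 0$ by the triangle inequality together with $\|x_1^{**}\|_1\le 1$; a convex function lying below an affine function and equal to it on $[s_0,\infty)$ must equal it on all of $[0,\infty)$, giving $g(1)=\|y\|_1+1$, i.e. $\|y+x_1^{**}\|_1=\|y\|_1+1$, as desired.

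The main obstacle is exactly this last point — transferring the $L$-orthogonality identity, which one gets for free on the "large" part of each ray $\{ty : t>0\}\subseteq X\cap X_1$, down to $t=1$; the convexity-plus-upper-bound argument above resolves it, but it hinges on verifying $\|x_1^{**}\|_1 \le 1$ (equivalently $\|x^{**}\|=1$, which follows from $x^{**}$ being $L$-orthogonal to the nonzero subspace $X\cap X_i$) so that the triangle-inequality upper bound $g(s)\le s\|y\|_1+1$ is available. Everything else is bookkeeping with the $\ell_\infty$-sum decomposition $(X_1\oplus_\infty X_2)^{**}=X_1^{**}\oplus_\infty X_2^{**}$.
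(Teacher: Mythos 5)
Your proposal is correct and follows essentially the same route as the paper: decompose $y+x^{**}$ inside $X_1^{**}\oplus_\infty X_2^{**}$, use $\|x^{**}\|=1$ (hence $\|x_2^{**}\|_2\le 1$) coming from $L$-orthogonality, and conclude that the $\ell_\infty$-maximum is attained in the first coordinate. The scaling-plus-convexity detour at the end is unnecessary: since $y\neq 0$ one has $\|y\|+1>1\ge\|x_2^{**}\|_2$, so already at $t=1$ the maximum must be $\|y+x_1^{**}\|_1$, giving $\|y+x_1^{**}\|_1=\|y\|+1$ in one line, which is exactly how the paper concludes.
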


\begin{proof}
By symmetry, it is enough to check that $x_1^{**}$ is $L$-orthogonal to $X\cap X_1$. 
Pick any nonzero vector $x_1 \in X\cap X_1$. Then $$1<1+\|x_1\|_1=1+\|x_1\|=\|x_1+x^{**}\|=\max\{\|x_1+x_1^{**}\|_1,\|x_2^{**}\|_2\}.$$
Since $1=\|x^{**}\|\geq \|x_2^{**}\|_2$, we have $1+\|x_1\|_1=\|x_1+x_1^{**}\|_1$
as desired.
\end{proof}

\begin{cor}
\label{coroaux}
Let $X$ be a subspace of $\left(
\bigoplus_{i \in \mathcal{I}} X_i \right)_\infty$ with an $L$-orthogonal element $x^{**}\in X^{**}$.
For each $i\in\mathcal{I}$, $x_i^{**}:=P_i^{**}x^{**}$ is $L$-orthogonal to $X\cap X_i$ 
provided $X \cap X_i \neq \{0\}$.
\end{cor}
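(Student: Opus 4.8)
The plan is to deduce Corollary~\ref{coroaux} from Lemma~\ref{lemaux2} by reducing the infinite $\ell_\infty$-sum to the two-summand case. Fix $i\in\mathcal{I}$ and assume $X\cap X_i\neq\{0\}$. Write the big sum as $\left(\bigoplus_{j\in\mathcal{I}}X_j\right)_\infty = X_i\oplus_\infty Y_i$, where $Y_i:=\left(\bigoplus_{j\in\mathcal{I}\setminus\{i\}}X_j\right)_\infty$; this is an isometric identification, and under it $P_i$ is exactly the projection onto the first coordinate of the two-summand decomposition. Taking biduals, $\left(\bigoplus_{j\in\mathcal{I}}X_j\right)_\infty^{**} = X_i^{**}\oplus_\infty Y_i^{**}$ (using the fact recalled before Lemma~\ref{lemaux2} that the bidual of an $\ell_\infty$-sum of two spaces splits as the $\ell_\infty$-sum of the biduals), so the given $L$-orthogonal element $x^{**}\in X^{**}$ decomposes as $x^{**}=x_i^{**}+y_i^{**}$ with $x_i^{**}=P_i^{**}x^{**}\in X_i^{**}$ and $y_i^{**}\in Y_i^{**}$.

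Now $X$ is a subspace of $X_i\oplus_\infty Y_i$ carrying the $L$-orthogonal element $x^{**}=x_i^{**}+y_i^{**}$, and the "first fiber'' $X\cap X_i$ is nonzero by hypothesis. Lemma~\ref{lemaux2} applied with $X_1:=X_i$, $X_2:=Y_i$ then yields directly that $x_i^{**}$ is $L$-orthogonal to $X\cap X_i$, which is the assertion of the corollary. (One should note that the $L$-orthogonality normalization $\|x^{**}\|=1$ used inside the proof of Lemma~\ref{lemaux2} is harmless: either one incorporates it into the statement, or one observes that an $L$-orthogonal element automatically has norm $1$, since $\|x^{**}\| = \|0 + x^{**}\| = 1 + \|0\| = 1$.)

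There is essentially no obstacle here; the only point requiring a modicum of care is the bookkeeping of the identifications. Specifically, one must check that the canonical projection $P_i\colon\prod_{j}X_j\to X_i$ really does coincide, after the regrouping $\left(\bigoplus_j X_j\right)_\infty\cong X_i\oplus_\infty Y_i$, with the first-coordinate projection of the pair, so that $P_i^{**}$ agrees with the bidual of that first-coordinate projection — this is immediate once the isometry is written down explicitly. With that in hand the corollary is just an instance of the lemma, and no further computation is needed.
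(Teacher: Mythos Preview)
Your argument is correct and is exactly the paper's approach: fix the coordinate, regroup the $\ell_\infty$-sum as $X_i\oplus_\infty Y_i$ with $Y_i=\left(\bigoplus_{j\neq i}X_j\right)_\infty$, and invoke Lemma~\ref{lemaux2}. Your added remarks on the bidual splitting and the normalization are fine but not needed beyond what the paper already uses.
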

\begin{proof}
Fix $j\in \mathcal{I}$ and suppose that $X \cap X_j \neq \{0\}$. To show that $x_j^{**}:=P_j^{**}x^{**}$ is $L$-orthogonal to $X\cap X_j$ write $X$ as a subspace of $Y_j \oplus_\infty X_j$ where $Y_j:=\left(
\oplus_{i \in \mathcal{I}\setminus\{j\}} X_i \right)_\infty$ and apply the previous lemma.
\end{proof}

Now we are in position to prove Theorem \ref{theo:sucesinLorto}: 

\begin{proof}
By Lemma \ref{Ready for Main teo}, for each measure $\mu \in S_{\ell_1^{**}}$ we let a norm-one operator $T_\mu \colon \ell_1 \longrightarrow C(\{0,1\}^\nat)$ and $c_\mu \in \mathbb{R}$ such that $(T_{\mu}e_n)_n$ is an $L$-orthogonal sequence in $C(\{0,1\}^\nat)$, yet $\|T_{\mu}^{**}(\mu)-c_\mu \mathbbm{1}\|< 1$. 

\smallskip

Conveniently we stipulate $\cS:=S_{\ell_1^{**}}$ and consider $X:=\ell_1 \oplus c_0(\cS)$ endowed with the norm given by $$ \|(x,y)\|= \sup_{\mu \in \cS} \{ \|x\|_1, \|T_\mu x+y_\mu \mathbbm {1}\|_\infty\}, $$ 
being $x \in \ell_1$ and $y= (y_\mu)_{\mu \in \cS} \in c_0(\cS)$.

\begin{claim}
   $(X,\|\cdot\|)$ is a normed space isomorphic to $\ell_1\oplus_1 c_0(\cS)$.

\end{claim}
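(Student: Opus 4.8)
The plan is to check that $\|\cdot\|$ satisfies the norm axioms and then verify that the identity map $\mathrm{id}$ on the underlying vector space $\ell_1\oplus c_0(\cS)$ is a linear homeomorphism from $(X,\|\cdot\|)$ onto $\ell_1\oplus_1 c_0(\cS)$; here $\ell_1\oplus_1 c_0(\cS)$ carries its natural norm $\|(x,y)\|_\oplus:=\|x\|_1+\sup_{\mu\in\cS}|y_\mu|$. First, $\|\cdot\|$ is a seminorm, being the supremum of the seminorms $(x,y)\mapsto\|x\|_1$ and $(x,y)\mapsto\|T_\mu x+y_\mu\mathbbm 1\|_\infty$, $\mu\in\cS$ (each of the latter is a seminorm since $(x,y)\mapsto T_\mu x+y_\mu\mathbbm 1$ is linear with values in $C(\{0,1\}^\nat)$), and this supremum is finite because, using $\|T_\mu\|=1$, every term is bounded by $\|x\|_1+\sup_{\nu\in\cS}|y_\nu|<\infty$. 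For definiteness, if $\|(x,y)\|=0$ then $\|x\|_1=0$, hence $x=0$, and then $|y_\mu|=\|y_\mu\mathbbm 1\|_\infty=0$ for all $\mu\in\cS$, hence $y=0$; so $\|\cdot\|$ is a norm.

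One inequality is immediate from $\|T_\mu\|\le 1$: for every $\mu\in\cS$,
\[
\|T_\mu x+y_\mu\mathbbm 1\|_\infty\le\|T_\mu x\|_\infty+|y_\mu|\le\|x\|_1+\sup_{\nu\in\cS}|y_\nu|,
\]
and trivially $\|x\|_1\le\|x\|_1+\sup_{\nu\in\cS}|y_\nu|$, so taking the supremum over $\mu$ yields $\|(x,y)\|\le\|(x,y)\|_\oplus$. For the reverse inequality I would use the reverse triangle inequality in $C(\{0,1\}^\nat)$: for each $\mu\in\cS$,
\[
\|T_\mu x+y_\mu\mathbbm 1\|_\infty\ge\|y_\mu\mathbbm 1\|_\infty-\|T_\mu x\|_\infty\ge|y_\mu|-\|x\|_1 ,
\]
and hence, taking the supremum over $\mu$ and using $\|x\|_1\le\|(x,y)\|$,
\[
\sup_{\mu\in\cS}|y_\mu|\le\|(x,y)\|+\|x\|_1\le 2\|(x,y)\| .
\]
Adding $\|x\|_1\le\|(x,y)\|$ gives $\|(x,y)\|_\oplus\le 3\|(x,y)\|$.

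Combining the two estimates, $\tfrac13\|(x,y)\|_\oplus\le\|(x,y)\|\le\|(x,y)\|_\oplus$, so $\mathrm{id}$ is a bounded linear bijection with bounded inverse, i.e.\ an isomorphism of normed spaces; in particular $(X,\|\cdot\|)$ is complete, being linearly homeomorphic to the Banach space $\ell_1\oplus_1 c_0(\cS)$. There is no serious obstacle: the argument is just a routine reverse triangle inequality estimate. The only point that requires a little care is that the supremum defining $\|\cdot\|$ ranges over exactly the index set $\cS$ of the $c_0$-summand, which is precisely what makes the lower bound on $\sup_{\mu\in\cS}|y_\mu|$ go through; the constant $3$ is of course not claimed to be optimal.
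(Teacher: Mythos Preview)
Your proof is correct and follows essentially the same route as the paper: both sandwich the new norm between constant multiples of $\|x\|_1+\|y\|_\infty$ via the triangle and reverse triangle inequalities, with your organization yielding the slightly sharper constant $3$ in place of the paper's $4$. You also spell out the norm axioms (in particular definiteness), which the paper leaves implicit.
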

\begin{proof}[Proof of claim]
On the one hand we have	
	$$ \|(x,y)\|= \sup_{\mu  \in \cS} \{ \|x\|_1, \|T_{\mu} x+y_\mu \mathbbm {1}\|_\infty \} \leq
\sup_{\mu  \in \cS} \{ \|x\|_1, \|T_{\mu} x \|_\infty+|y_\mu| \} \leq 	$$
$$ \leq \sup_{\mu  \in \cS} \{ \|x\|_1, \|x\|_1+|y_\mu| \} = \|x\|_1 + \sup_{\mu  \in \cS} |y_\mu| = \|x\|_1 + \|y\|_\infty .$$

On the other hand 		
	$$ \|(x,y)\|= \sup_{\mu  \in \cS} \{ \|x\|_1, \|T_{\mu} x+y_\mu \mathbbm {1}\|_\infty \} \geq \sup_{\mu  \in \cS} \frac{\|x\|_1 + \|T_{\mu} x+y_\mu \mathbbm1\|_\infty }{2} \geq $$
	$$ \geq \sup_{\mu  \in \cS} \frac{\|x\|_1 + \|y_\mu \mathbbm {1}\|_\infty - \|T_{\mu} x\|_\infty }{2} \geq \sup_{\mu  \in \cS} \frac{\|x\|_1 + \|y_\mu \mathbbm {1}\|_\infty - \| x\|_1 }{2}$$
 and this latter expression equals  $\sup_{\mu  \in \cS} \frac{|y_\mu|}{2} = \frac{\| y \|_\infty }{2}$. Therefore,
	 $$  \|(x,y)\| \geq \max \left\{ \|x\|_1 , \frac{\| y \|_\infty }{2} \right\} \geq \frac{\|x\|_1 + \|y\|_\infty }{4}.$$
  Putting together the above computations we deduce the claim.
\end{proof}
The above shows that $X$  is a direct sum of $\ell_1$ and $c_0(\mathcal{S}).$

\begin{claim}
    $X$ contains an $L$-orthogonal sequence.
\end{claim}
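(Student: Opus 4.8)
The plan is to exhibit the sequence $x_n:=(e_n,0)\in\ell_1\oplus c_0(\cS)$ and show it is $L$-orthogonal in $X$. The starting observation is that the norm on $X$ is, by its very definition, the pullback of an $\ell_\infty$-sum norm: the map
\[J\colon X\longrightarrow \ell_1\oplus_\infty\Big(\bigoplus_{\mu\in\cS}C(\{0,1\}^\nat)\Big)_\infty,\qquad J(x,y):=\big(x,\,(T_\mu x+y_\mu\mathbbm{1})_{\mu\in\cS}\big),\]
is well defined (because $\|T_\mu x+y_\mu\mathbbm{1}\|_\infty\le\|x\|_1+\|y\|_\infty$) and is a linear isometry onto its image, since for every $(x,y)$ the quantity $\|J(x,y)\|$ in the $\ell_\infty$-sum is exactly $\max\{\|x\|_1,\sup_{\mu\in\cS}\|T_\mu x+y_\mu\mathbbm{1}\|_\infty\}=\|(x,y)\|$. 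Note also $J(e_n,0)=(e_n,(T_\mu e_n)_{\mu\in\cS})$, so it suffices to produce an $L$-orthogonal sequence in $J(X)$ made of points of this form.

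The second step is coordinate-wise: $(e_n)_n$ is an $L$-orthogonal sequence in $\ell_1$, since $\|e_n\|_1=1$ and $\|x+e_n\|_1=\|x\|_1-|x(n)|+|x(n)+1|\to\|x\|_1+1$ for every $x\in\ell_1$ (as $x(n)\to 0$); and for each $\mu\in\cS$ the sequence $(T_\mu e_n)_n$ is $L$-orthogonal in $C(\{0,1\}^\nat)$, which is precisely the content of Lemma~\ref{Ready for Main teo}. Applying Lemma~\ref{lemaux1} to the family indexed by $\cS\cup\{\star\}$, with $X_\star:=\ell_1$ and $X_\mu:=C(\{0,1\}^\nat)$ for $\mu\in\cS$, yields that the diagonal sequence $\big(e_n,(T_\mu e_n)_{\mu}\big)_n=(J(e_n,0))_n$ is $L$-orthogonal in the $\ell_\infty$-sum. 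Since each of these elements lies in the subspace $J(X)$, the sequence is a fortiori $L$-orthogonal in $J(X)$, and transporting it back through the isometry $J^{-1}$ shows that $(x_n)_n=((e_n,0))_n$ is $L$-orthogonal in $X$.

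I do not expect a genuine obstacle: the norm on $X$ was engineered precisely so that the $\ell_1$-coordinate carries the canonical $\ell_1$-type $(e_n)_n$ while the $c_0(\cS)$-coordinates only add constants to the $C(\{0,1\}^\nat)$-factors, and adding constants does not disturb the $L$-orthogonality of the $(T_\mu e_n)_n$ coming from Lemma~\ref{Ready for Main teo}. The only points requiring a line apiece are that $\|(e_n,0)\|=1$ (so the sequence lies in $B_X$; this uses $\|T_\mu\|=1$), and that in the degenerate branch of Lemma~\ref{Ready for Main teo} one has $T_\mu e_{n_0}=0$ instead of $f_{n_0}$ for a single index, which is harmless since $L$-orthogonality of a sequence depends only on its tail and all norms $\|T_\mu e_n\|_\infty$ remain $\le 1$. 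If one prefers to avoid Lemma~\ref{lemaux1}, the same conclusion follows by a direct estimate: $\limsup_n\|(x,y)+(e_n,0)\|\le 1+\|(x,y)\|$ by the triangle inequality, while $\liminf_n\|(x,y)+(e_n,0)\|\ge 1+\|x\|_1$ and, for each $\mu\in\cS$, $\liminf_n\|(x,y)+(e_n,0)\|\ge 1+\|T_\mu x+y_\mu\mathbbm{1}\|_\infty$; taking the supremum of the right-hand sides recovers $1+\|(x,y)\|$.
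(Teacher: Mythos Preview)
Your proof is correct and follows essentially the same approach as the paper: define the isometric embedding into the $\ell_\infty$-sum (the paper calls it $S$, you call it $J$), invoke Lemma~\ref{lemaux1} using that $(e_n)_n$ is $L$-orthogonal in $\ell_1$ and each $(T_\mu e_n)_n$ is $L$-orthogonal in $C(\{0,1\}^\nat)$, and then restrict to the image subspace. You spell out several details the paper leaves implicit---the verification that $(e_n)_n$ is $L$-orthogonal in $\ell_1$, the passage from $L$-orthogonality in the ambient $\ell_\infty$-sum to the subspace $J(X)$, and the harmlessness of the single zero term in the degenerate branch of Lemma~\ref{Ready for Main teo}---and you also offer a direct-estimate alternative, but the core argument is the same.
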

\begin{proof}[Proof of claim]
  The operator $$\textstyle S\colon X \longrightarrow \ell_1 \oplus_\infty \left(
\bigoplus_{\mu  \in \cS} X_\mu \right)_\infty$$ given by the formula $S(x,y):=(x,(T_{\mu} x+y_\mu \mathbbm {1})_{\mu\in\mathcal{S}})$ defines an isometric embedding by the very definition of the norm in $X$, being $X_\mu=C(\{0,1\}^\nat)$ for each $\mu\in \cS$.
It follows from Lemma \ref{lemaux1} and the properties of $T_\mu$ that the sequence $(S(e_n,0))_n$ is an $L$-orthogonal sequence in $SX$, and therefore $X$ contains an $L$-orthogonal sequence.
\end{proof}

	The end game is given by the next claim:
 \begin{claim}
  $X$ does not contain an $L$-orthogonal element.
 \end{claim}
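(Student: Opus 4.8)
The plan is to argue by contradiction. Suppose $x^{**}\in X^{**}$ is $L$-orthogonal to $X$; applying the defining equality to $0\in X$ gives $\|x^{**}\|=1$. I will transport the problem along the isometric embedding $S\colon X\to W:=\ell_1\oplus_\infty\bigl(\bigoplus_{\mu\in\cS}X_\mu\bigr)_\infty$ constructed above (recall $X_\mu=C(\{0,1\}^\nat)$), identifying $W$ with $\bigl(\bigoplus_{i\in\{0\}\cup\cS}X_i\bigr)_\infty$ where $X_0:=\ell_1$, so that Corollary~\ref{coroaux} becomes available with this index set.

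The first step is to compute $S^{**}x^{**}$. Write $X^{**}=\ell_1^{**}\oplus c_0(\cS)^{**}$ and use $c_0(\cS)^{**}=\ell_\infty(\cS)$, so that $x^{**}$ corresponds to a pair $(\xi,(\eta_\mu)_{\mu\in\cS})$ with $\xi\in\ell_1^{**}$ and $(\eta_\mu)_\mu\in\ell_\infty(\cS)$. Since $S$ is the ``diagonal'' of the projection $\pi_1\colon X\to\ell_1$ together with the maps $(x,y)\mapsto T_\mu x+y_\mu\mathbbm 1$, and the last summand of each of these is a rank-one operator $\mathbbm 1\otimes\phi_\mu$ with $\phi_\mu(x,y)=y_\mu$, a routine computation with double adjoints shows that the $\ell_1$-coordinate of $S^{**}x^{**}$ is $\xi$ and its $X_\mu$-coordinate is
$$\zeta_\mu:=P_\mu^{**}(S^{**}x^{**})=T_\mu^{**}\xi+\eta_\mu\mathbbm 1\in X_\mu^{**}.$$
As $S^{**}$ is isometric and the coordinate projections of $W$ are contractive, $\|\xi\|\le\|x^{**}\|=1$ and $\|\zeta_\mu\|\le1$ for every $\mu\in\cS$.

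Next I would locate the fibers and invoke Corollary~\ref{coroaux}. A direct inspection shows that for every $\mu\in\cS$ one has $SX\cap X_\mu=\mathbb R\mathbbm 1\ne\{0\}$ (it is precisely the set of $S(0,te_\mu)$, $t\in\mathbb R$, where $e_\mu$ is the $\mu$-th unit vector of $c_0(\cS)$). Hence, by Corollary~\ref{coroaux}, $\zeta_\mu$ is $L$-orthogonal to $\mathbb R\mathbbm 1$ inside $X_\mu^{**}$, which after substituting the formula for $\zeta_\mu$ reads
$$\bigl\|\,T_\mu^{**}\xi+(\eta_\mu+t)\mathbbm 1\,\bigr\|=1+|t|\qquad\text{for all }t\in\mathbb R\text{ and all }\mu\in\cS.$$
Evaluating at $t=-\eta_\mu$ gives $\|T_\mu^{**}\xi\|=1+|\eta_\mu|$; but $\|T_\mu\|=1$ and $\|\xi\|\le1$, so $1\ge\|\xi\|\ge\|T_\mu^{**}\xi\|=1+|\eta_\mu|\ge1$, which forces $\|\xi\|=1$ and $\eta_\mu=0$ for every $\mu\in\cS$. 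Consequently $\xi\in S_{\ell_1^{**}}=\cS$; put $\mu_0:=\xi$. Then $\zeta_{\mu_0}=T_{\mu_0}^{**}\mu_0$, and evaluating the last displayed identity at $\mu=\mu_0$ and $t=-c_{\mu_0}$ yields $\|T_{\mu_0}^{**}\mu_0-c_{\mu_0}\mathbbm 1\|=1+|c_{\mu_0}|\ge1$. This contradicts $\|T_{\mu_0}^{**}\mu_0-c_{\mu_0}\mathbbm 1\|<1$ provided by Lemma~\ref{Ready for Main teo}, proving the claim — and with it Theorem~\ref{theo:sucesinLorto}.

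The only part that requires genuine care is the first step: justifying the identification $c_0(\cS)^{**}=\ell_\infty(\cS)$ together with the coordinate description of $S^{**}x^{**}$, and checking that each $\zeta_\mu$ is an honest element of $X_\mu^{**}$ so that Corollary~\ref{coroaux} applies verbatim; the remaining steps are elementary norm estimates.
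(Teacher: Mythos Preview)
Your argument is correct and follows the same overall strategy as the paper: transport along $S$, identify $SX\cap X_\mu=\mathbb{R}\mathbbm 1$, apply Corollary~\ref{coroaux}, and contradict the norm bound from Lemma~\ref{Ready for Main teo}. The one genuine difference is how you handle the possibility $\xi=0$. The paper splits into cases and, when the $\ell_1^{**}$-component vanishes, invokes Godefroy's theorem to rule out $L$-orthogonal elements in $c_0(\cS)^{**}$. You instead squeeze directly: evaluating the $L$-orthogonality relation at $t=-\eta_\mu$ gives $\|T_\mu^{**}\xi\|=1+|\eta_\mu|$, and the contractivity of $T_\mu^{**}$ together with $\|\xi\|\le 1$ forces $\eta_\mu=0$ and $\|\xi\|=1$ simultaneously. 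This is a tidy shortcut that dispenses with the case split and the external reference; the paper's route, on the other hand, never needs to show $\eta_\mu=0$ and passes straight from $\|T_\mu^{**}\mu-c_\mu\mathbbm 1\|<1$ to the failure of $L$-orthogonality to the constants.
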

 \begin{proof}[Proof of claim]
     Suppose by way of contradiction that $z^{**} \in X^{**}$ is an $L$-orthogonal element.  Notice that the norm of $\ell_1$ and $c_0(\cS)$ as subspaces of $X$ are the standard ones; namely, $\|\cdot\|_1$ and $\|\cdot\|_\infty$, respectively. Moreover, since $X$ is a direct sum of $\ell_1$ and $c_0(\cS)$ we have that 
$$X^{**}=\ell_1^{**} \oplus c_0(\cS)^{**}= \ell_1^{**} \oplus \ell_\infty(\cS).$$
Thus, we can write $z^{**}=(x^{**},y^{**})$, where $x^{**} \in \ell_1^{**}$ and $y^{**} \in c_0(\cS)^{**}$.

\underline{Case $x^{**}=0$:} If $x^{**}=0$ then we claim that $y^{**}$ would be an $L$-orthogonal element in $c_0(\cS)^{**}$: Indeed,  the canonical isometric embedding $$i:c_0(\cS) \longrightarrow X$$ satisfies $i^{**}(y^{**})=z^{**}$ and therefore $$\|y+y^{**}\|=\|i^{**}(y+y^{**})\|=\|i(y)+z^{**}\|=\|y\|+1$$ for every $y\in c_0(\cS)$. Note that this is impossible since $c_0(\cS)$ cannot contain $L$-orthogonal elements due to the fact that it does not contain copies of $\ell_1$ (recall \cite[Theorem II.4]{god}). 

\smallskip

\underline{Case $x^{**}\neq 0$:} Suppose that $z^{**}=(x^{**},y^{**})$ with $x^{**} \neq 0$. Let $\mu:=x^{**}/\|x^{**}\| \in \mathcal{S}$.
By Corollary \ref{coroaux}, if $z^{**}$ is an $L$-orthogonal element then $P_\mu^{**}(S^{**}z^{**})$ should be an $L$-orthogonal element in $(SX) \cap X_\mu$,
where $$\textstyle P_\mu : \ell_1 \oplus_\infty \left(
\bigoplus_{\mu  \in \cS} X_\mu \right)_\infty \longrightarrow X_\mu$$ is the projection onto the coordinate $\mu$.
Notice that $$\text{$P_\mu S(x,y)=T_{\mu} x+y_\mu \mathbbm {1}$ for every $(x,y) \in X$,}$$ so
$P_\mu^{**} S^{**}(x^{**},y^{**})=T_{\mu}^{**} x^{**}+y_\mu^{**} \mathbbm {1}$. 
Since $$T_{\mu}^{**} x^{**} = T_{\mu}^{**} (\|x^{**}\|\mu) =\|x^{**}\| T_{\mu}^{**} \mu,$$
we have that $$ \|T_{\mu}^{**} x^{**}+y_\mu^{**} \mathbbm {1} -(c_\mu \|x^{**}\|+ y_\mu^{**})\mathbbm{1}\|=\|\|x^{**}\|(T_{\mu}^{**} \mu-c_\mu \mathbbm{1})\|<\|x^{**}\| \leq 1.$$

Since $SX \cap X_\mu$ contains all the constant functions, it follows from the last inequality that $P_\mu^{**}(S^{**}z^{**})$ is not $L$-orthogonal in $(SX) \cap X_\mu$, and thus $z^{**}$ cannot be an $L$-orthogonal element in $X^{**}$.  
 \end{proof}
Combining all the previous claims Theorem~\ref{theo:sucesinLorto} follows.
\end{proof}

\section{Open questions}
\label{sec:openquestions}
After our investigations, one question still remains elusive:


\begin{question}
    Is there a model of $\mathrm{ZFC}$ with $Q^+(\omega)$-measures but without $Q$-points?
\end{question}

This seems plausible, yet hard to arrange. The basic issue is the tension between the existence of $Q^+(\omega)$-measures and $Q$-points. Namely,  in all the classical models where there are no $Q$-points there are no $Q^+(\omega)$-measures either. 
We also ask whether a quantitative version version of Theorem~\ref{Theorem strong Q-points} is possible:

\begin{question}
    Assume $\mu$ is an $\varepsilon$-strong $Q$-measure and $(x_n)_{n\in \omega}$ is an $L$-orthogonal sequence in a Banach space $X$. Does
    $$\norm{x + \mu\text{-}\lim(x_n)}\ge \norm{x}+ \varepsilon\norm{\mu \text{-}\lim(x_n)}$$ hold for $x\in X$?
\end{question}

In fact, we do not have examples that separate any of the various generalizations of $Q$-points that have arisen in our work.

\begin{question}
    Are any of the notions of $Q^+(\omega)$-measure, $\delta$-fit $Q$-measure, fit $Q$-measure, $\varepsilon$-strong $Q$-measure, and the like, equivalent to each other?
\end{question}

And finally, we wonder if any of these codes the transmission of $L$-orthogonality from a sequence to an element of the bidual.

\begin{question}
    Is there a combinatorial characterization of those measures $\mu$ such that the $\mu\text{-}\lim$ through any $L$-orthogonal sequence is an $L$-orthogonal element?
\end{question}

\subsection*{Acknowledgments}
Avilés was supported by PID2021-122126NB-C32 fun\-ded by MCIN/AEI/10.13039/501100011033, 
by ERDF A way of making Europe, and by Fundaci\'{o}n S\'{e}neca - Agencia de Ciencia y Tecnolog\'{\i}a de la
Regi\'{o}n de Murcia (21955/PI/22). 
Martínez-Cervantes  was  supported by Fundaci\'{o}n S\'{e}neca - ACyT Regi\'{o}n de Murcia (grant 21955/PI/22) and by Agencia Estatal de Investigaci\'on and EDRF/FEDER ``A way of making Europe" (MCIN/AEI/10.13039/501100011033) (grant PID2021-122126NB-C32). 
Poveda is funded by the Department of Mathematics and the Center of Mathematical Sciences and Applications at Harvard University. 
Sáenz gratefully acknowledges support received from the PAPIIT grant IN 101323, the CONAHCYT scholarship and by Fundaci\'{o}n S\'{e}neca - Agencia de Ciencia y Tecnolog\'{\i}a de la
Regi\'{o}n de Murcia (21955/PI/22).

\bibliographystyle{alpha} 
\bibliography{citations}
\end{document}